\newtheorem{theorem}{Theorem}
\newtheorem{corollary}[theorem]{Corollary}
\newtheorem{lemma}[theorem]{Lemma}
\newtheorem{proposition}[theorem]{Proposition}
\newtheorem{remark}[theorem]{Remark}
\newenvironment{proof}[1][Proof]{\noindent\textbf{#1.} }{\ \rule{0.5em}{0.5em}}
\newcommand{\ud}{\,\mathrm{d}}
\newcommand{\p}{\ensuremath{\partial}}
\newcommand{\n}{\ensuremath{\nonumber}}
\newcommand{\eps}{\ensuremath{\varepsilon}}
\newcommand{\bigO}{\mathcal{O}}
\title{\vspace{-50pt} On Global-in-$x$ Stability of Blasius Profiles}
\author{ \Large Sameer Iyer  \footnote{\url{ssiyer@math.princeton.edu}. Department of Mathematics, Princeton University, Fine Hall, Washington Road, Princeton, NJ 08540, USA. Supported by NSF grant DMS-1802940.} }
\date{December 10, 2018}
\begin{document}

\maketitle

\begin{abstract} We characterize the well known self-similar Blasius profiles, $[\bar{u}, \bar{v}]$, as downstream attractors to solutions $[u,v]$ to the 2D, stationary Prandtl system. It was established in \cite{Serrin} that $\| u - \bar{u}\|_{L^\infty_y} \rightarrow 0$ as $x \rightarrow \infty$. Our result furthers \cite{Serrin} in the case of localized data near Blasius by establishing convergence in stronger norms and by characterizing the decay rates. Central to our analysis is a ``division estimate", in turn based on the introduction of a new quantity, $\Omega$, which is globally nonnegative precisely for Blasius solutions. Coupled with an energy cascade and a new weighted Nash-type inequality, these ingredients yield convergence of $u - \bar{u}$ and $v - \bar{v}$ at the essentially the sharpest expected rates in $W^{k,p}$ norms. 
\end{abstract}

\section{Introduction}

\hspace{5 mm} The 2D, stationary, homogeneous Prandtl equations are given by: 
\begin{align} \label{Pr.intro}
uu_x + v u_y - u_{yy} = 0, \hspace{3 mm} u_x + v_y = 0, \hspace{3 mm} (x,y) \in \mathbb{R}_+ \times \mathbb{R}_+
\end{align}

\noindent The system is typically supplemented with initial data at $\{x = 0\}$ and boundary data at $\{y =0\}$, and $y \uparrow \infty$: 
\begin{align} \label{BC}
u|_{x = 0} = u_0(y), \hspace{5 mm} [u,v]|_{y = 0} = 0, \hspace{5 mm} u|_{y \uparrow \infty} = u_E(x). 
\end{align}

\noindent For simplicity, we will take $u_E(x) = 1$, but any constant will also work. The $x$ direction is considered a time-like direction, while the $y$-direction is considered a space-like direction, and the equation (\ref{Pr.intro}) is  considered as an evolution in the $x$ variable. Correspondingly, $u_0(y)$ is called the ``initial data" and as a general matter of terminology, in this paper the words ``global" and ``local" refer to the $x$-direction. 

The following is a classical result due to Oleinik (see \cite{Oleinik}, P. 21, Theorem 2.1.1):
\begin{theorem}[\cite{Oleinik}] \label{thm.Oleinik} Assume:
\begin{align} \n
& u_0(y) > 0 \text{ for } y > 0, \\ \n
& u_0'(0) > 0, \\ \n
& u_0 \in C^\infty(y \ge 0), \\ \n
& u_0''(y) \sim y^2 \text{ near } y = 0, \\ \n
& |u_0(y) - 1| \text{ and } \p_y^k u_0(y) \text{ decay exponentially for } k \ge 1.
\end{align}

\noindent Then there exists a global solution, $[u, v]$ to (\ref{Pr.intro}) satisfying, for some $y_0, m > 0$, 
\begin{align} \label{coe.2}
&\sup_{x } \sup_{y \in (0, y_0)} |u, v, u_y, u_{yy}, u_x| \lesssim 1, \\ \label{coe.1}
&u_y(x,0)  > 0 \text{ and } u > 0.
\end{align}
\end{theorem}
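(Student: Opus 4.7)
My plan is to follow the classical Crocco-transformation strategy. The first step is to rewrite the Prandtl system as a scalar degenerate parabolic equation for the shear $w = u_y$ on a fixed rectangular domain. Differentiating the Prandtl equation once in $y$ and using incompressibility gives the vorticity equation $u w_x + v w_y - w_{yy} = 0$. Introducing the Crocco variables $\xi = x$, $\eta = u(x,y)$ (valid globally because the hypotheses $u > 0$ and $u_y(x,0) > 0$ make the map $y \mapsto u(x,y)$ monotone onto $[0,1)$), and setting $W(\xi,\eta) = u_y(x,y)$, a direct chain-rule calculation collapses the system into the scalar equation
\begin{equation}
\eta\, W_\xi = W^2\, W_{\eta\eta}, \qquad \xi > 0,\ \eta \in (0,1),
\end{equation}
together with boundary conditions $W(\xi,1) = 0$ (coming from $u \to u_E = 1$) and $W W_\eta|_{\eta=0} = 0$ (coming from evaluating Prandtl at $y=0$, where $u$ and $v$ vanish), plus initial data $W_0(\eta)$ read off from $u_0$. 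The hypothesis $u_0''(y) \sim y^2$ near $y = 0$ is tailored so that $W_0$ is smooth up to $\eta = 0$ and satisfies the compatibility condition $W_{0,\eta}(0) = 0$.

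The core of the proof is to build a global solution to this nonlinear degenerate problem together with uniform-in-$\xi$ estimates. I would first regularize by adding $\eps\, W_{\eta\eta}$ and modifying the coefficient $W^2/\eta$ near the singular edges so that classical quasilinear parabolic theory yields short-time smooth solutions $W^{\eps}$. The key a priori estimates, to be proved uniformly in $\eps$, are: (i) a maximum-principle upper bound for $W^{\eps}$ obtained from the evolution structure, which controls $u_y$ and after one further differentiation $u_{yy}$; (ii) a uniform lower bound $W^{\eps}(\xi,0) \geq \underline{w} > 0$, which is the analytic manifestation of the absence of boundary-layer separation and translates back to (\ref{coe.1}); (iii) Bernstein-type bounds on $W^{\eps}_\eta$ and $W^{\eps}_{\eta\eta}$, produced by differentiating the equation and running a maximum principle on appropriate combinations $W^{\eps}_\eta + \lambda\, \phi(\eta)$. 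These yield the pointwise bounds on $u_{yy}$, and then on $u_x$ via the Prandtl equation itself, giving (\ref{coe.2}).

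With uniform estimates in hand I would pass to the limit $\eps \downarrow 0$ and propagate global existence by a continuation argument: a local solution persists as long as $W$ stays bounded and bounded away from zero at $\eta = 0$, and both conditions are furnished by (i) and (ii). Finally, inverting the Crocco transformation — solve $\p_y u = W(\xi,u)$ with $u|_{y=0} = 0$, and recover $v = -\int_0^y u_x\,\mathrm{d}y'$ from incompressibility — produces the pair $[u,v]$ solving (\ref{Pr.intro}) and (\ref{BC}), with the bounds (\ref{coe.2})--(\ref{coe.1}) inherited from the corresponding bounds on $W$.

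The hard part will be step (ii), the global-in-$\xi$ lower bound for $W$ at $\eta = 0$, which is exactly the statement that the flow never separates. Under a favorable/zero pressure gradient ($u_E \equiv 1$) one expects a sign structure allowing a barrier of the form $\underline{W}(\xi,\eta) = c_1 \eta + c_2 \eta^2$ to be compared against $W$ via the maximum principle, but the apparent singularity $W^2/\eta$ at $\eta = 0$ and the simultaneous vanishing of $W$ at $\eta = 1$ force one to choose barriers carefully and to argue at the degenerate edges using compatibility of $W_0$. Making this rigorous, and combining it with the Bernstein estimates in (iii) so that no derivative blows up as $x \to \infty$, is the delicate analytic heart of Oleinik's construction.
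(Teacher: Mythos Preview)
The paper does not supply its own proof of this statement: Theorem~\ref{thm.Oleinik} is quoted as a classical result of Oleinik, with an explicit pointer to \cite{Oleinik}, p.~21, Theorem~2.1.1, and is used as a black box throughout. So there is no ``paper's proof'' to compare your proposal against.

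That said, your outline is in fact the classical route taken in \cite{Oleinik}: the Crocco change of variables $(x,y)\mapsto(\xi,\eta)=(x,u)$, $W=u_y$, reducing Prandtl to the scalar degenerate parabolic equation $\eta W_\xi = W^2 W_{\eta\eta}$ on the strip $\eta\in(0,1)$, followed by maximum-principle/barrier arguments for two-sided bounds on $W$ and Bernstein estimates for derivatives. The derivation you give of the scalar equation and of the boundary condition $W W_\eta|_{\eta=0}=0$ is correct. One small slip: your proposed lower barrier $\underline{W}(\xi,\eta)=c_1\eta+c_2\eta^2$ vanishes at $\eta=0$ and so cannot by itself furnish the strict positivity $W(\xi,0)\ge \underline w>0$ that you need for (\ref{coe.1}); in Oleinik's argument the lower barrier is instead shaped to be strictly positive at $\eta=0$ and to vanish at $\eta=1$ (schematically of the form $c(1-\eta)$ with a suitable correction), exploiting the Neumann condition $W_\eta(\xi,0)=0$ at the wall. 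Apart from this detail your plan matches the reference the paper cites.
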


Given the global existence of a solution to (\ref{Pr.intro}), the next point is to describe more precisely the asymptotics of the evolution as $x \rightarrow \infty$. In order to do, let us introduce the self-similar Blasius solutions: 
\begin{align} \label{blasius}
[\bar{u}, \bar{v}] = \Big[f'(\eta), \frac{1}{\sqrt{x + x_0}}\{ \eta f'(\eta) - f(\eta) \} \Big], \text{ where } \eta = \frac{y}{\sqrt{x+x_0}},
\end{align}

\noindent where $f$ satisfies
\begin{align} \label{blasius.ODE}
ff'' + f''' = 0, \hspace{3 mm} f'(0) = 0, \hspace{2 mm} f'(\infty) = 1, \hspace{2 mm} \frac{f(\eta)}{\eta} \xrightarrow{n \rightarrow \infty} 1.
\end{align}

\noindent Here, $x_0 > 0$ is a free parameter. The following hold: 
\begin{align*}
0 \le f' \le 1, \hspace{3 mm} f''(\eta) \ge 0, \hspace{3 mm} f''(0) > 0, \hspace{3 mm} f'''(\eta) < 0 . 
\end{align*}

We now recall the following result of Serrin's: 

\begin{theorem}[\cite{Serrin}] \label{th.serrin} Let $u$ be a solution to (\ref{Pr.intro}), (\ref{BC}) such that $\p_y u_0(y)$ is continuous. Then the following asymptotics hold
\begin{align} \label{serrin.asy}
\| u - \bar{u} \|_{L^\infty_y} \rightarrow 0 \text{ as } x \rightarrow \infty. 
\end{align}
\end{theorem}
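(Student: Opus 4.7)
My plan is to pass to self-similar coordinates and reformulate the problem as showing convergence of an evolution to a unique steady state. Define $\tau = \log(x+x_0)$ and $\eta = y/\sqrt{x+x_0}$, and introduce the rescaled unknown $U(\tau,\eta) := u(x,y)$ together with the appropriately rescaled tangential-to-normal pairing that reflects the divergence-free condition. Under this change of variables, (\ref{Pr.intro}) becomes an autonomous nonlinear equation on $\{\tau > \log x_0,\ \eta > 0\}$ with boundary conditions $U|_{\eta = 0} = 0$, $U|_{\eta \to \infty} = 1$. The Blasius profile (\ref{blasius}) corresponds to the $\tau$-independent stationary solution $\bar{U}(\eta) = f'(\eta)$, so the task reduces to showing that every admissible trajectory $U(\tau, \cdot)$ converges uniformly in $\eta$ to $\bar{U}$ as $\tau \to \infty$.

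Next I would seek a Lyapunov functional for the self-similar evolution. A natural starting candidate is a weighted $L^2$-type quantity $\mathcal{E}(\tau) = \int_0^\infty \Phi\bigl(U(\tau,\eta), \bar{U}(\eta)\bigr)\mu(\eta)\,\mathrm{d}\eta$, and one would differentiate in $\tau$ and integrate by parts, exploiting the strict positivity $\bar{U}, \bar{U}'' > 0$ (in the relevant sense) to extract a nonnegative dissipation. It is likely more effective to work at the vorticity level, $\omega = U_\eta$, where after a division by a positive profile the equation takes on a cleaner diffusive structure, producing monotone decay. This is consistent with the abstract's allusion to a ``division estimate'' and to the quantity $\Omega$; for the qualitative conclusion sought here, one only needs $\mathcal{E}(\tau)\to 0$, not a rate.

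To upgrade the energy decay to pointwise convergence, I would combine the Lyapunov monotonicity with the \emph{a priori} bounds of Theorem \ref{thm.Oleinik}: uniform bounds on $u, u_y, u_{yy}, u_x$ near $y=0$ together with the positivity (\ref{coe.1}) transfer in the self-similar variables to uniform regularity of $U(\tau, \cdot)$ on compact $\eta$-intervals. An Arzel\`a--Ascoli extraction along $\tau_n \to \infty$ yields a subsequential limit $U_\infty$ in $L^\infty_{\mathrm{loc}}$; because $\mathcal{E}$ is monotone, $\mathcal{E}(U_\infty) = \lim_\tau \mathcal{E}(\tau)$, and plugging the limit trajectory into the self-similar equation forces $U_\infty \equiv \bar{U}$ by uniqueness of the steady state. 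Since the limit is unique, the whole family converges, and translating back to the original $(x,y)$ coordinates gives (\ref{serrin.asy}).

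The main obstacle I anticipate is the \emph{tail} behavior as $\eta \to \infty$: Oleinik's bounds in Theorem \ref{thm.Oleinik} are local near the wall, so controlling $U - \bar{U}$ uniformly for large $\eta$ (to ensure $\mathcal{E}$ measures convergence to Blasius rather than to some other constant-at-infinity profile) requires separate work. I would handle this by exploiting the boundary condition $u \to 1$, combined with the exponential decay of $|u_0 - 1|$ and its derivatives assumed in Theorem \ref{thm.Oleinik}, propagated in $x$ via comparison against explicit super/sub-solutions of (\ref{Pr.intro}). A secondary technical difficulty is identifying the steady state as Blasius uniquely, which requires ruling out other self-similar profiles consistent with the structural constraints $U, U_\eta > 0$ and the prescribed boundary data.
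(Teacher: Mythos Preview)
This statement is not proved in the paper. Theorem~\ref{th.serrin} is quoted from Serrin's 1967 paper as background; the present paper's contribution begins with Theorem~\ref{thm.main}, which strengthens Serrin's qualitative convergence to quantitative decay rates under additional smallness and localization assumptions. There is therefore no ``paper's own proof'' of this statement to compare your proposal against.

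That said, it is worth noting that both Serrin's original argument and the present paper's quantitative analysis work in the von Mises variables $(x,\psi)$, with the unknown $\phi = u^2 - \bar{u}^2$ satisfying the scalar parabolic equation~(\ref{nonlin.eq}), rather than in the self-similar frame $(\tau,\eta)$ you propose. Serrin's proof proceeds by maximum-principle and comparison arguments applied to $\phi$ in the $(x,\psi)$ plane, not by a Lyapunov functional plus compactness. Your dynamical-systems strategy is a plausible alternative route, and you have correctly identified its two main obstacles: uniform control as $\eta\to\infty$ (Oleinik's bounds are only local near the wall, and the hypothesis here is merely continuity of $\partial_y u_0$, not the exponential decay assumed in Theorem~\ref{thm.Oleinik}), and uniqueness of the self-similar steady state. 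The first of these is a genuine gap in your outline under the weak hypotheses of Theorem~\ref{th.serrin}: without decay assumptions on $u_0-1$ you cannot propagate tail control by the barrier argument you sketch, so the Lyapunov functional you write may not even be finite. Serrin's comparison-based approach sidesteps this by working directly with $L^\infty$ bounds on $\phi$ rather than integral quantities.
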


First, let us mention that the results in \cite{Serrin} are more general than the theorem stated above in the sense that $u_{E}(x)$ in (\ref{BC}) is allowed to be have polynomial growth in $x$, whereas in the present paper we are only concerned with constant $u_E$ (which corresponds to shear flow). 

The purpose of the present work is to further Theorem \ref{th.serrin} under the assumption of small, localized perturbations of the Blasius profile.   

\begin{theorem} \label{thm.main} Fix any $0 < \eps << 1$. Let $u_0(y)$ satisfy Oleinik's conditions as stated in Theorem \ref{thm.Oleinik}. Suppose $|\p_y^l \{u_0 - 1 \} \langle y \rangle^M| \le \eps$ for some large $M$. Fix $\gamma < M$, any $\kappa > 0$ small but arbitrary. Let $[u, v]$ be the unique solution to (\ref{Pr.intro}), (\ref{BC}) with initial data $u_0(y)$. Then the following asymptotics are valid: 
\begin{align} 
\begin{aligned} \label{est.main}
&\| \p_x^\alpha \p_y^\beta \{ u - \bar{u}  \} \langle \eta \rangle^\gamma \|_{L^p_y} \lesssim C_\kappa \langle x \rangle^{-\frac{1}{2} + \frac{1}{2p}- \alpha - \frac{\beta}{2} + \kappa}, \\
&\| \p_x^\alpha \p_y^\beta \{ v - \bar{v}  \} \langle \eta \rangle^\gamma \|_{L^p_y} \lesssim C_\kappa \langle x \rangle^{-1 + \frac{1}{2p}- \alpha - \frac{\beta}{2} + \kappa}
\end{aligned}
\end{align}
\end{theorem}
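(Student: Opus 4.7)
I would first recast the problem in self-similar coordinates $\tau = \log(x+x_0)$, $\eta = y/\sqrt{x+x_0}$, in which the Blasius profile is stationary and the Prandtl system becomes autonomous. Writing the perturbation $p = u - \bar u$, $q = v - \bar v$, the resulting equation takes the schematic form $\partial_\tau p + L_B p = N(p,q)$, where $L_B$ is the linearization around Blasius and $N$ is a quadratic remainder. Power decay in $x$ then corresponds to exponential decay in $\tau$, so the target is a suitable decay estimate for $L_B$ that survives in the nonlinear regime.

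The core of the argument would be the construction of the scalar quantity $\Omega$ alluded to in the abstract, built from $u$ and its derivatives so as to vanish identically on Blasius. The properties I would aim to establish are (i) a transport-diffusion equation for $\Omega$ with a maximum principle, forcing $\Omega \ge 0$ globally in $x$ whenever it is so initially, and (ii) a pointwise lower bound relating $|u-\bar u|$ (and its derivatives) to $\Omega$ divided by a known positive Blasius quantity. Property (ii) is the ``division estimate'': to bound $u - \bar u$ in a weighted norm, it suffices to bound $\Omega$, and the positivity from (i) prevents sign cancellation in the division. This avoids the ill-posedness pitfalls of linearized Prandtl by extracting a genuinely scalar, signed proxy for the perturbation and by-passing the non-symmetric nature of $L_B$.

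With $\Omega$ in hand, I would run an energy cascade in self-similar variables. Multiplying the equation for $\Omega$ by weighted test functions $\langle\eta\rangle^{2\gamma}\Omega$ and integrating, I expect differential inequalities $\frac{d}{d\tau}E_k + D_k \lesssim \text{(nonlinear remainders)}$, where $D_k$ controls one more $\eta$-derivative of $\Omega$ than $E_k$. The localization hypothesis $|\partial_y^\ell (u_0 - 1)\langle y\rangle^M|\le \eps$ supplies weighted $L^1$-type control on $\Omega$ at $\tau = 0$, which under the evolution is propagated using positivity of $\Omega$. A weighted Nash-type inequality of the schematic form
\begin{equation*}
\|f\|_{L^2(\langle\eta\rangle^{2\gamma})}^{2+\theta} \lesssim \|f\|_{L^1(\langle\eta\rangle^{\gamma})}^{\theta}\bigl(\|f\|_{L^2}^2 + \|\partial_\eta f\|_{L^2}^2\bigr)
\end{equation*}
then converts the dissipation $D_k$ into a self-improving bound on $E_k$, yielding algebraic decay in $\tau$, i.e.\ power decay in $x$. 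Bootstrapping in $k$ and interpolating via Gagliardo--Nirenberg would give the full $W^{k,p}$ statement, while the $\kappa$-loss in the exponent absorbs weight commutators that arise from repeated integration by parts.

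The main obstacle is steps (i)--(ii): identifying $\Omega$ so that it simultaneously diagonalizes the Blasius-linearized dynamics and admits a maximum principle is delicate, because $L_B$ is neither symmetric nor obviously coercive in a natural weighted inner product, and naive candidates (such as $u/\bar u - 1$ or $\partial_y(u_y/u)$) fail either positivity or the right evolution equation. Once $\Omega$ and the division estimate are in place, the energy-cascade plus weighted-Nash machinery is more classical, though care is required to track the interaction between the algebraic weight $\langle\eta\rangle^\gamma$ and the transport part of the equation in order to land on the sharp rates in (\ref{est.main}).
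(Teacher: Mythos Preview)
Your proposal rests on a fundamental misreading of what $\Omega$ is and how the division estimate works. In the paper, $\Omega = -\bar{u}_{yy} + \tfrac{1}{2}\bar{u}\bar{u}_x$ is built \emph{entirely from the Blasius background} $\bar{u}$; it is not a perturbation quantity, it does not vanish on Blasius, and it does not satisfy an evolution equation or a maximum principle. Its global nonnegativity (Lemma~\ref{Omega.lemma}) is a purely algebraic fact about the Blasius ODE $ff''+f'''=0$: one computes $2x\Omega = (-\eta f' + 2f)f''$ and checks by elementary monotonicity arguments that $2f \ge \eta f'$ for all $\eta \ge 0$. There is no dynamical content here, and in particular no transport--diffusion equation for $\Omega$ to exploit.

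Correspondingly, the ``division estimate'' is not a pointwise bound of the form $|u-\bar{u}| \lesssim \Omega/(\text{positive})$. It is an energy estimate in the von Mises variable $\psi$ (the paper works in $(x,\psi)$ throughout, not in self-similar $(\tau,\eta)$), obtained by testing the equation for $\phi = u^2 - \bar{u}^2$ against the multiplier $\phi\,\langle\psi\rangle/u$. The cross terms that arise combine into $\int \phi^2 \langle\psi\rangle u^{-3}\,\Omega$, and it is precisely the sign $\Omega \ge 0$ that allows this term to be discarded. The payoff is control of $\|\phi\,\langle\psi\rangle^{1/2} u^{-1/2}\|_{L^2_\psi}$, i.e.\ an extra $1/\sqrt{u}$ near the boundary and an extra $\sqrt{\psi}$ at infinity; the former saves derivatives in the top-tier commutator estimates, and the latter feeds the Nash step. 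Your picture of $\Omega$ as a signed scalar proxy for the perturbation, propagated by a maximum principle and then divided out, is an entirely different mechanism, and you correctly flag that you do not have a candidate satisfying (i)--(ii); the paper does not need one.

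Your final paragraph on the weighted Nash inequality is closer in spirit, though the actual inequality (Lemma~\ref{lemma.nash}) is piecewise and tailored to the degenerate weight $\sqrt{u}$ appearing in the dissipation, replacing $\|\phi\|_{L^1}$ by $\|\phi\sqrt{\psi}\|_{L^2}$; it is not the standard $L^1$--$H^1$ Nash interpolation you write down.
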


\iffalse
In comparison to \cite{Serrin}, the assumptions of Theorem \ref{thm.main} are more restrictive in the sense that $u_0 - 1$ is assumed to be localized, whereas in \cite{Serrin}, one only needs qualitative convergence $u_0 \rightarrow 1$ as $y \uparrow \infty$. Second, the perturbation $u_0 - 1$ is assumed to be small in magnitude (size $\eps$) in Theorem \ref{thm.main}, whereas this is not necessary in \cite{Serrin}. 
\fi
The asymptotic information obtained in estimate (\ref{est.main}) is much more precise than (\ref{serrin.asy}). In fact, the estimates (\ref{est.main}) are essentially the optimal expected rates in the following sense: 

\begin{remark} \label{remark.1} Given localized initial data, $h_0(y)$, one expects solutions, $h(x,y)$ to the heat equation, $(\p_x - \p_{yy})h = 0$ to decay at rates $\| \p_x^\alpha \p_y^\beta h \|_{L^p_y} \lesssim \langle x \rangle^{-\frac{1}{2}+\frac{1}{2p} - \alpha - \frac{\beta}{2}}$.  
\end{remark}

\noindent Note that we require the small $\kappa > 0$ in (\ref{est.main}) to avoid logarithmic singularities at $x = \infty$.

One of the motivations for establishing quantitive estimates of the type (\ref{est.main}) is due to recent advances in the validity theory for steady Navier-Stokes flows, for instance the works of \cite{GI}, \cite{GIb}. In particular, using the estimates (\ref{est.main}) we can generalize the class of data treated by \cite{GI}:

\begin{corollary} Consider initial data, $u_0(y)$, that is a small perturbation of Blasius in the sense of Theorem \ref{thm.main}. Then for $x_0 >> 1$, we may take $[u(x_0, \cdot), v(x_0, \cdot)]$ as the $\{x = 0\}$ data in Theorem 1 of \cite{GI}. 
\end{corollary}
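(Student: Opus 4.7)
The plan is to verify, for $x_0$ chosen sufficiently large, that the pair $[u(x_0,\cdot), v(x_0,\cdot)]$ satisfies the hypotheses on the $\{x=0\}$ data imposed in Theorem~1 of \cite{GI}. Those hypotheses amount to: (i) Oleinik-type structural conditions ($u>0$ in $y>0$, $u_y(0)>0$, matching of $u$ to the outer Euler flow, and compatible decay of tangential derivatives in $y$), and (ii) a smallness condition on $[u(x_0,\cdot)-\bar{u}(x_0,\cdot),\, v(x_0,\cdot)-\bar{v}(x_0,\cdot)]$ measured in a weighted $W^{k,p}_y$ norm with the similarity weight $\langle \eta \rangle^\gamma$. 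The strategy is to verify (i) via propagation of Oleinik's structural bounds along the flow from Theorem~\ref{thm.Oleinik}, and then to reduce (ii) to a direct application of the decay estimates (\ref{est.main}) at $x=x_0$.

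For (i), since the initial datum $u_0$ satisfies Oleinik's hypotheses by assumption, Theorem~\ref{thm.Oleinik} furnishes a global solution with the uniform non-degeneracy (\ref{coe.1}) and the uniform $L^\infty$ control (\ref{coe.2}). In particular, at any fixed $x=x_0$ the restriction $u(x_0,\cdot)$ is smooth, strictly positive for $y>0$, has positive tangential derivative at the wall, and tends to $1$ as $y\uparrow\infty$. The exponential decay of $\p_y^k(u_0-1)$ is then upgraded to polynomial decay in $\langle y\rangle$ (equivalently in $\langle \eta \rangle$ up to powers of $\sqrt{x_0+x_0}$) by combining the weighted estimate (\ref{est.main}) with the trivial weighted decay of $\bar{u}(x_0,\cdot)-1$; this shows that the structural hypotheses of \cite{GI} are met at the slice $x=x_0$.

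For (ii), the key point is that after substituting any choice of $(\alpha,\beta,p)$ into (\ref{est.main}) compatible with the weighted $W^{k,p}_y$ norm appearing in Theorem~1 of \cite{GI}, the right-hand side is of the form $C_\kappa \langle x_0\rangle^{-\delta+\kappa}$ with a strictly positive exponent $\delta = \delta(\alpha,\beta,p)$ (since $\alpha+\beta/2>0$ and $1/2-1/(2p)\ge 0$ for $p\ge 1$), and an analogous statement holds for $v-\bar v$. Picking $\kappa<\delta$ and then choosing $x_0$ large relative to the reciprocal of the smallness constant prescribed in \cite{GI}, the required norm of $[u(x_0,\cdot)-\bar u(x_0,\cdot),\,v(x_0,\cdot)-\bar v(x_0,\cdot)]$ falls below that threshold. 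Reabsorbing the Blasius shift (the parameter $x_0$ in (\ref{blasius})) into the Blasius family used by \cite{GI} yields the desired admissibility.

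The only mildly subtle point is a bookkeeping one: the decay estimates are stated with the self-similar weight $\langle \eta \rangle^\gamma = \langle y/\sqrt{x+x_0}\rangle^\gamma$, so one must check that this weight is compatible with (indeed, stronger at large $x_0$ than) the weight used in \cite{GI}. Given $\gamma<M$ and the initial localization $\langle y\rangle^M$, this compatibility follows by interpolation between the $\langle \eta\rangle^\gamma$-weighted estimate provided by (\ref{est.main}) and the cruder $\langle y\rangle^{\gamma'}$-weighted bound that propagates from the initial datum. This completes the verification.
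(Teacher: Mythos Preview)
Your approach is essentially the same as the paper's: both reduce the corollary to a direct application of the decay estimates (\ref{est.main}) at a large slice $x=x_0$. The paper's proof is a single sentence, pointing to the specific place in \cite{GI} (the proof of Lemma~9) where the bounds (\ref{est.main}) are to be inserted, whereas you spell out more explicitly which hypotheses of Theorem~1 of \cite{GI} are being verified and why. That elaboration is fine and arguably more self-contained.

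One imprecision worth fixing: your claim that the exponent $\delta=\delta(\alpha,\beta,p)$ is strictly positive ``since $\alpha+\beta/2>0$'' is not literally correct, because the $W^{k,p}$ norm includes the undifferentiated case $\alpha=\beta=0$, for which the first line of (\ref{est.main}) gives exponent $-\tfrac12+\tfrac{1}{2p}+\kappa$; this is nonpositive only at $p=1$. So either restrict to $p>1$ (which is what \cite{GI} actually uses), or note separately that the $L^1$ endpoint is not needed. This is cosmetic and does not affect the argument.
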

\begin{proof} This follows immediately upon applying the estimates (\ref{est.main}) above in the proof of Lemma 9 of \cite{GI}.  
\end{proof}

A second motivation for this work is that in order to prove the global validity of steady Prandtl expansions, a work currently underway by the author, one needs a precise understanding of the decay mechanism in the Prandtl equations, which is established in the present work. 
  
Let us now point the reader towards the related work of \cite{DM}, which studies the formation of singularities (in this context called ``separation") for the inhomogeneous version of (\ref{Pr.intro}) (with adverse pressure gradient).

\subsection{Notation and Main Objects}

\hspace{5 mm} We now introduce the main notations that will be in use for this paper. First, we introduce the stream function, 
\begin{align} \label{eq.psi}
\psi = \int_0^y u(x,y') \ud y'. 
\end{align}

\noindent A classical idea (\cite{Oleinik}) is to write the Prandtl system, (\ref{Pr.intro}) in the variables $(x,\psi)$: 
\begin{align*}
\p_x (u^2) - u \p_{\psi \psi} (u^2) = 0. 
\end{align*}

Define the difference unknowns: 
\begin{align*}
\phi(x,\psi) := u^2(x,\psi) - \bar{u}^2(x,\psi), \hspace{5 mm} \rho(x,\psi) := u(x,\psi) - \bar{u}(x,\psi). 
\end{align*}

\noindent It is shown in \cite{Serrin}, equation (11), that $\phi$ satisfies the equation
\begin{align} \label{nonlin.eq}
\phi_x - u \phi_{\psi \psi} + A \phi = 0, \hspace{5 mm} A = -2 \frac{\bar{u}_{yy}}{\bar{u}(\bar{u} + u)}
\end{align}

Recall the self-similar variable $\eta$ as defined in (\ref{blasius}). For simplicity, we will set the parameter $x_0 = 1$. We define a new self-similar variable, which reflects the diffusive scaling in (\ref{nonlin.eq}) via
\begin{align*}
\xi := \frac{\psi}{\sqrt{x+1}}.
\end{align*}

\noindent By using that $u \sim \eta$ for $\eta \lesssim 1$ (as established in Theorem \ref{thm.Oleinik}) and integrating equation (\ref{eq.psi}) via
\begin{align*}
\psi = \psi(x,y) = \int_0^y \bar{u}(x, y') \ud y' = \int_0^y \eta' \ud y' = \frac{1}{\sqrt{x}} \int_0^y y' \ud y' \sim y \eta = \eta^2 \sqrt{x},
\end{align*}

\noindent we obtain the relation 
\begin{align*}
\sqrt{\xi} = \eta \text{ for } \eta \le 1. 
\end{align*}

The basic object of study throughout the paper will be $\phi$, which satisfies the equation (\ref{nonlin.eq}), in the variables $(x,\psi)$ and correspondingly the self-similar variable $\xi$. 

Let us now give a brief review of the properties of $\bar{u}$ and $u$. First, Oleinik's global existence result, Theorem \ref{thm.Oleinik}, gives that $u \sim \eta$ near $\eta \le 1$. Regarding $\bar{u}$, the main properties are summarized in (\ref{blasius.ODE}). Of particular note is the concavity of $\bar{u}$, guaranteed by $f''' < 0$. In particular this implies that $A \ge 0$ in (\ref{nonlin.eq}). 

We will now introduce the norms in which we measure the solution $\phi$. First, we simplify notation throughout the paper by putting $\phi^{(k)} := \p_x^k \phi$. 
\begin{align} \label{norm.X}
&\| \phi \|_X := \sum_{k= 0}^{K_0} \| \phi \|_{X_{k}}, \\ \n
&\| \phi \|_{X_k} := \| \phi^{(k)} \langle x \rangle^{k - \sigma_k} \|_{L^\infty_x L^2_\psi} + \| \phi^{(k)} \frac{\langle \sqrt{\psi} \rangle}{\sqrt{u}} \langle x \rangle^{k - \sigma_k} \|_{L^\infty_x L^2_\psi} \\ 
& \hspace{15 mm} + \| \sqrt{u} \phi^{(k)}_\psi \langle x \rangle^{k - \sigma_k} \|_{L^2_x L^2_\psi} + \| \phi^{(k)}_\psi \sqrt{\psi} \langle x \rangle^{k - \sigma_k} \|_{L^2_x L^2_\psi}. 
\end{align}

\noindent Above, we let $\sigma_k$ be a sequence such that $\sigma_0 = 0$ and $\sigma_{k+1} > \sigma_k$. $K_0$ will be a fixed, large number. We will denote by $E_k(x)$ an arbitrary quantity satisfying $\sup_x |E_k(x)| \lesssim \sum_{j = 0}^k \| \phi \|_{X_j}$. Similarly, we will denote by $I_k(x)$ an arbitrary quantity satisfying $\int_0^\infty |I_k(x)| \ud x \lesssim \sum_{j = 0}^k \| \phi \|_{X_j}$.

By rescaling, we may arrange that the quadratic terms in (\ref{nonlin.eq}) have a factor of $\eps$ in front of them: 
\begin{align} \label{eps.nonlin}
\phi_x - (\bar{u} + \eps \rho) \phi_{\psi \psi} + A \phi = 0, \hspace{5 mm} A = -2 \frac{\bar{u}_{yy}}{\bar{u}(2\bar{u} + \eps \rho)}.
\end{align}

\subsection{Main Ideas}

\hspace{5 mm}  The main mechanisms can be summarized in four steps listed below. Overall, at each order of $x$ regularity up to $\p_x^{K_0}$ for a fixed $K_0$ large, there are two estimates that are performed. We call these the ``Energy estimate" and the ``Division estimate". This results in the control of the norm $\| \phi \|_X$ as shown above. 

\subsubsection*{\normalfont \textit{Step 1: $L^2$ level}}

\hspace{5 mm}  At the $L^2$ level, we may center our discussion around the linearized operator from (\ref{eps.nonlin}), which reads 
\begin{align} \label{lin.1}
\phi_x - \bar{u} \phi_{\psi \psi} + \bar{A} \phi, \hspace{5 mm} \bar{A} = - \frac{\bar{u}_{yy}}{\bar{u}^2}.
\end{align}

The standard energy estimate gives a bound on $\sup_x \| \phi \|_{L^2_\psi}^2 +  \|\sqrt{\bar{u}} \phi_\psi \|_{L^2_x L^2_\psi}^2$. The $\bar{A}$ term does not play a role at this point, as $\bar{A} \ge 0$ and may thus be ignored. 

The second estimate at the $L^2$ level is the ``Division estimate", which can be found in Lemma \ref{division}. There are two distinguished features of the quantities that are controlled (see the estimate (\ref{div.low})). First, there is a far-field weight of $\langle \psi \rangle$. Second, there is a nonlinear weight $\frac{1}{u}$ which gives additional control near the boundary $\{\psi = 0\}$. 

The reason we can close this Division estimate is due to the precise structure of Blasius solutions. Indeed, the choice of weight $\frac{\langle \psi \rangle}{u}$ is specially designed so that the interaction with the linearized equation, (\ref{lin.1}), produces the quantity
\begin{align*}
\int \phi^2 \times \text{positive quantities} \times \Omega, \text{ where }  \Omega = - \bar{u}_{yy} + \frac{1}{2}\bar{u} \bar{u}_x. 
\end{align*}

\noindent This type of quantity would be out of reach of the norm $X$. However, because $\bar{u}$ is a Blasius solution (not just a generic Prandtl solution), we are able to show that $\Omega(x,y)$ is globally positive. This is the content of Lemma \ref{Omega.lemma}.

The reason we need the division estimate is two-fold, corresponding to the two weights. The weight $\langle \psi \rangle$ comes in for Step 4, whereas the boundary weight $\frac{1}{u}$ comes in for Step 3. 

\subsubsection*{\normalfont \textit{Step 2: $H^k$ for $1 \le k \le K_1$}}

\hspace{5 mm}  We now fix $K_1$ so that $1 << K_1 << K_0$. The tier of derivatives between $1$ and $K_1$ we call the ``middle tier". The middle tier is distinguished from the top tier because we are able to expend derivatives. The middle tier is distinguished from the bottom ($L^2$) tier because the linearized equation is no longer (\ref{lin.1}), but rather 
\begin{align} \label{lin.2}
\phi_x - \bar{u} \phi_{\psi \psi} + \bar{A} \phi - \frac{\p_x \bar{u}}{\bar{u}} \phi. 
\end{align}

\noindent We arrive here by substituting the equation (\ref{eps.nonlin}) upon differentiating it in $x$. The reason the linearized equation has changed is due to the quasilinearity present in (\ref{lin.1}). At this stage we repeat the process of Step 1, taking advantage of the further property of Blasius solutions that $\p_x \bar{u} < 0$. 

\subsubsection*{\normalfont \textit{Step 3: $H^k$ for $K_1 + 1 \le k \le K_0$}}

\hspace{5 mm} We now arrive at the top tier of derivative in the norm $X$. The top tier is distinguished because we do not have derivatives to expend. First of all, we select $K_0$ large enough so that the ``tame principle" kicks in. For instance, terms like $\p_x^j f \times \p_x^{K_0-j} g \times \p_x^{K_0} h$ are bound to have either $j$ or $K_0 - j$ to be much smaller than $K_0 - 3$. This is standard in quasilinear problems.

The important part, however, is that the crucial weight of $\frac{1}{u}$ available due to the division estimate, is used to ``save derivatives". This is most easily seen in a term such as (``high" and ``low" refer to order of $x$ derivative):
\begin{align*}
\int \phi^{\text{high}} \phi^{\text{low}}_{\psi \psi} \phi^{\text{high}} \chi(\xi \lesssim 1). 
\end{align*}

\noindent For a term such as this, we are forced to put $\phi^{\text{low}}_{\psi \psi}$ in an $L^\infty$ type norm in order to conserve the high derivatives. To do this, with the weights of $\bar{u}$ distributed as optimally as we are allowed with the $X$ norm, we \textit{must} invoke the additional $\frac{1}{\sqrt{u}}$ weight available due to the division estimate. This is quantified by proving a localized, optimal weight, uniform estimate on $\phi^{\text{low}}_{\psi \psi}$ (see for instance, (\ref{unif.2})).

\subsubsection*{\normalfont \textit{Step 4: Optimal Decay}}

\hspace{5 mm}  Using Steps 1 - 3 we are able to show global existence of $\phi$ in the space $X$. The space $X$ certainly encodes decay information regarding the solution $\phi$ - this is evident by consulting (\ref{norm.X}). However, if one is comparing to the expected optimal asymptotics of parabolic equations in dimension one, in the sense of Remark \ref{remark.1}, then one notices that the space $X$ is a factor of $\langle x \rangle^{\frac{1}{4}}$ weaker than optimal. 

The reader should now recall the classical Nash inequality, \cite{Nash}, which states that $\| \phi \|_{L^2_\psi}^2 \lesssim \| \phi_\psi \|_{L^2_\psi}^{\frac{2}{3}} \| \phi \|_{L^1_\psi}^{\frac{4}{3}}$. Typically, one uses this by saying $\| \phi \|_{L^1_\psi}$ is conserved (say) and thus one inserts the Nash inequality to the basic energy bound to obtain an ODE of the form $\dot{\eta} + \eta^3 = 0$, for $\eta = \| \phi \|_{L^2_\psi}^2$, which immediately results in $\langle x \rangle^{-\frac{1}{4}}$ decay of $\| \phi \|_{L^2_\psi}$. 

In our case, two difficulties are present in order to carry out this procedure to optimize the decay. First, we only have the degenerate weighted quantity $\| \sqrt{\bar{u}} \phi_\psi \|_{L^2_\psi}$ appearing in the energy. Second, we cannot control $\| \phi \|_{L^1}$ by integrating the equation. 

To contend with these difficulties, we establish a new Nash-type inequality in Lemma \ref{lemma.nash} which (1) accounts for the degenerate weight of $\sqrt{\bar{u}}$ and (2) replaces the $L^1$ norm by $L^2(\sqrt{\langle \psi \rangle})$ (which scales the same way). The type of inequality we are able to establish is piecewise (as is seen from Lemma \ref{lemma.nash}). Remarkably, both upper bounds in estimate (\ref{Nash.1}) yield the same, optimal, decay rate of $\langle x \rangle^{-\frac{1}{4}}$. 

\section{Baseline Tier: $L^2$ Estimates}

In this section, we obtain two estimates at the $L^2$ level - the energy estimate and the division estimate. The reader is urged to keep in mind the linearized structure which is present at the $L^2$ level, equation (\ref{lin.1}).

\begin{lemma}[Energy Estimate] Let $\phi$ solve (\ref{eps.nonlin}). Then for $K_0 >> 1$, 
\begin{align} \label{energy.1}
\frac{\p_x}{2} \int \phi^2 + \int \bar{u} |\phi_\psi|^2 \lesssim \eps \langle x \rangle^{-(1+)} E_{K_0}(x). 
\end{align}
\end{lemma}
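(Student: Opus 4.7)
The plan is a standard $L^2$ energy argument on (\ref{eps.nonlin}). I would multiply the equation by $\phi$ and integrate in $\psi \in (0,\infty)$, then integrate by parts twice in the diffusive term $-\int (\bar{u}+\eps\rho)\phi\phi_{\psi\psi}$. The boundary terms at $\psi = 0$ vanish: $\phi|_{\psi = 0} = 0$ because $u$ and $\bar u$ both vanish at $y = 0$, and although $(\bar u + \eps \rho)_\psi$ diverges as $\psi \downarrow 0$, the fact that $\phi^2 \sim \psi^2$ there kills it. At $\psi = \infty$, $\phi$ and $\phi_\psi$ decay fast enough from the assumptions on the data. The resulting identity is
\begin{align*}
\tfrac12\, \p_x \int \phi^2 + \int \bar u\,\phi_\psi^2 + \int A\, \phi^2 - \tfrac12 \int \bar u_{\psi\psi}\,\phi^2 = -\eps \int \rho\,\phi_\psi^2 + \tfrac{\eps}{2}\int \rho_{\psi\psi}\,\phi^2.
\end{align*}

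Next I would verify that the four terms on the left are either the ones to retain or are nonnegative (and hence discardable). The first two are exactly what appears in (\ref{energy.1}). For $A \ge 0$ I use the Blasius concavity $\bar u_{yy} \le 0$ together with positivity of $\bar u$ and $2\bar u + \eps\rho$. For $-\tfrac12 \int \bar u_{\psi\psi}\phi^2 \ge 0$ I would either expand $\bar u_{\psi\psi} = \bar u_{yy}/u^2 - \bar u_y u_y/u^3$, noting both summands are nonpositive (from $\bar u_{yy} \le 0$ and $\bar u_y,\, u_y,\, u > 0$), or derive it from the Prandtl-form identity $\bar u_{\psi\psi} = (\bar u_x - \bar u_\psi^2)/\bar u$ combined with the Blasius monotonicity $\bar u_x \le 0$.

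It then remains to bound the two $\eps$-correction integrals on the right by $\eps \langle x \rangle^{-(1+)} E_{K_0}(x)$. For these I would pass to pointwise-in-$\psi$ control of $\rho$ and $\rho_{\psi\psi}$ via one-dimensional Sobolev embedding in $\psi$, using $\rho = \phi/(u+\bar u)$ and trading $\psi$-derivatives for $x$-derivatives through the equation (\ref{eps.nonlin}) itself. The seminorms that appear are $\|\phi^{(k)}\|_{L^2_\psi}$ for $1 \le k \le K_0$, each carrying the $x$-decay weight $\langle x\rangle^{k - \sigma_k}$ from the definition of $X_k$, so the combined pointwise factor has the required $\langle x \rangle^{-1-\kappa}$ decay.

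The main obstacle is the bad term $\eps\int \rho\,\phi_\psi^2$: the coercive quantity we retain is $\int \bar u\,\phi_\psi^2$, not $\int \phi_\psi^2$, and the weight $\bar u$ degenerates like $\sqrt{\psi}$ as $\psi \downarrow 0$. The resolution is that $\rho$ itself vanishes at the boundary at the same rate (both $\rho$ and $\bar u$ go like $\sqrt{\psi}$ near $\psi = 0$), so $|\rho|/\bar u$ is uniformly bounded with the additional $x$-decay inherited from higher $X_k$ norms. This permits the bound $\eps \int \rho\,\phi_\psi^2 \lesssim \eps \|\rho/\bar u\|_{L^\infty_\psi}\int \bar u\,\phi_\psi^2$, which, together with the $\eps$ smallness and integrable-in-$x$ decay, closes the estimate at the stated rate.
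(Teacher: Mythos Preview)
Your overall strategy---multiply by $\phi$, integrate by parts, and discard the sign-definite terms $\int A\phi^2$ and $-\tfrac12\int\bar u_{\psi\psi}\phi^2$---matches the paper exactly. The point of departure is the nonlinear term $-\eps\int\rho\,\phi_{\psi\psi}\phi$: the paper leaves it as is and estimates
\[
\Big|\eps\int\rho\,\phi_{\psi\psi}\phi\Big|\lesssim \eps\,\Big\|\tfrac{\rho}{\sqrt u}\Big\|_{L^\infty_\psi}\|\sqrt u\,\phi_{\psi\psi}\|_{L^2_\psi}\|\phi\|_{L^2_\psi},
\]
with $\|\sqrt u\,\phi_{\psi\psi}\|_{L^2_\psi}$ controlled by reading $u\phi_{\psi\psi}=\phi_x+A\phi$ directly off the equation (this is embedding (\ref{L2.emb}.11)). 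You instead integrate by parts twice, producing $-\eps\int\rho\,\phi_\psi^2$ and $\tfrac{\eps}{2}\int\rho_{\psi\psi}\phi^2$. Your absorption argument for the first of these is fine.

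The gap is in the second term. You propose ``pointwise-in-$\psi$ control of $\rho_{\psi\psi}$ via Sobolev embedding,'' but $\rho_{\psi\psi}$ is not in $L^\infty_\psi$: near $\psi=0$ one has (in the von Mises variables) $u_{\psi\psi}=(u_x-u_\psi^2)/u$, and since $u\sim\sqrt\psi$, $u_\psi\sim\psi^{-1/2}$, this gives $u_{\psi\psi}\sim-\psi^{-3/2}$; likewise for $\bar u_{\psi\psi}$. The leading singularities do \emph{not} cancel in $\rho_{\psi\psi}=u_{\psi\psi}-\bar u_{\psi\psi}$ unless $u_y(x,0)=\bar u_y(x,0)$, which is not assumed. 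So $\rho_{\psi\psi}\sim c(x)\psi^{-3/2}$ with $c(x)\neq 0$ generically, and the naive $\|\rho_{\psi\psi}\|_{L^\infty_\psi}\|\phi\|_{L^2_\psi}^2$ bound fails. The integral itself is finite (since $\phi^2\sim\psi^2$), but to estimate it you would need a weighted splitting such as $\|u^3\rho_{\psi\psi}\|_{L^\infty_\psi}\|\phi/u^{3/2}\|_{L^2_\psi}^2$ and then invoke the $\|\phi/u^2\|_{L^2_\psi}$ control from (\ref{L2.emb}.3)---none of which you mention. The paper's route avoids this difficulty entirely by not integrating by parts in the $\eps\rho$ piece.
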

\begin{proof} We take inner product against $\phi$ to obtain 
\begin{align}
\frac{\p_x}{2} \int \phi^2 + \int \bar{u} |\phi_\psi|^2 - \frac{1}{2} \int \bar{u}_{\psi \psi} |\phi|^2 + \int \eps \rho \phi_{\psi \psi} \phi + \int A |\phi|^2 = 0.
\end{align}

\noindent We now use that $u \ge 0$ and $\bar{u}_{\psi \psi} \le 0$. For $\bar{u}_{\psi \psi}$, we use the relation $\bar{u}_{yy} = \bar{u}^2 \bar{u}_{\psi \psi} + \bar{u} |\bar{u}_\psi|^2$ to conclude that $\bar{u}_{\psi \psi} \le 0$ if $\bar{u}_{yy} \le 0$, which holds by properties of the Blasius profile. We also use that $A \ge 0$, which again holds by the concavity of the Blasius profile. Finally, we estimate 
\begin{align*}
|\int \eps \rho \phi_{\psi \psi} \phi \| \lesssim & \eps \langle x \rangle^{-(\frac{5}{4}-)} \Big\| \frac{\rho}{\sqrt{u}} \langle x \rangle^{\frac{1}{4}-} \Big\| \| \sqrt{u} \phi_{\psi \psi} \langle x \rangle^{1-} \|_{L^2_\psi} \| \phi \|_{L^2_\psi} \\
\lesssim & \eps \langle x \rangle^{-(\frac{5}{4}-)} E_{K_0}(x),
\end{align*}

\noindent where we have used inequalities (\ref{L2.emb}.11) and (\ref{unif.0}.4). This concludes the proof. 
\end{proof}

\begin{lemma}[Division Estimate] \label{division} Let $\phi$ solve (\ref{eps.nonlin}). For $K_0$ sufficiently large, 
\begin{align} \label{div.low}
\frac{\p_x}{2} \int \phi^2 \frac{1}{u} \langle \psi \rangle + \int \phi_\psi^2 \langle \psi \rangle \lesssim \eps \langle x \rangle^{-(1+)} E_{K_0}(x).
\end{align}
\end{lemma}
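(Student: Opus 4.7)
The approach is to test equation (\ref{eps.nonlin}) against the weighted multiplier $\phi \cdot \frac{\langle\psi\rangle}{u}$ and integrate in $\psi \in (0, \infty)$. Three groups of terms appear. The time-derivative contribution splits as
\begin{align*}
\int \phi_x \phi \frac{\langle\psi\rangle}{u} \ud\psi = \frac{1}{2} \p_x \int \phi^2 \frac{\langle\psi\rangle}{u} \ud\psi + \frac{1}{2} \int \phi^2 \frac{u_x}{u^2} \langle\psi\rangle \ud\psi,
\end{align*}
producing the $\p_x$-term in (\ref{div.low}) plus a commutator. In the diffusion term the coefficient $u$ in front of $\phi_{\psi\psi}$ cancels exactly against the $1/u$ in the multiplier, leaving $-\int \phi_{\psi\psi}\phi\langle\psi\rangle\ud\psi = \int |\phi_\psi|^2\langle\psi\rangle\ud\psi - \frac{1}{2}\int \phi^2 \langle\psi\rangle''\ud\psi$ after integration by parts; there are no boundary contributions because $\phi|_{\psi=0} = u^2|_{\psi=0} - \bar{u}^2|_{\psi=0} = 0$. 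Finally the damping contributes $\int A\phi^2 \frac{\langle\psi\rangle}{u}\ud\psi$.

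The key step is to exhibit a good sign for the combined commutator and damping terms. Writing $u = \bar{u} + \eps\rho$ and $A = \bar{A} + \bigO(\eps)$ with $\bar{A} = -\bar{u}_{yy}/\bar{u}^2$, the leading linear part collapses to
\begin{align*}
\frac{1}{2}\int \phi^2\frac{\bar{u}_x}{\bar{u}^2}\langle\psi\rangle\ud\psi + \int \bar{A}\phi^2\frac{\langle\psi\rangle}{\bar{u}}\ud\psi = \int \phi^2\frac{\langle\psi\rangle}{\bar{u}^3}\,\Omega\ud\psi, \qquad \Omega = -\bar{u}_{yy} + \tfrac{1}{2}\bar{u}\bar{u}_x,
\end{align*}
precisely the quantity highlighted in Step 1 of Section 1.2. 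By Lemma \ref{Omega.lemma}, $\Omega \ge 0$ globally on Blasius, so this contribution has the favorable sign and may be retained on the LHS or discarded.

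Two families of leftover terms remain and must match the target bound $\eps\langle x\rangle^{-(1+)}E_{K_0}(x)$. The Hardy-type piece $-\frac{1}{2}\int\phi^2\langle\psi\rangle^{-3}\ud\psi$ is dominated by a small multiple of $\int \phi_\psi^2\langle\psi\rangle\ud\psi$ via the sharp Hardy inequality on $[1,\infty)$ together with a Poincar\'e-type bound on $[0,1]$ exploiting $\phi(x,0) = 0$, and absorbed either into the coercive term on the LHS or into the $\Omega$-integral. The genuinely nonlinear $\eps\rho$ corrections to $u_x/u^2$ and to $A$ are estimated by H\"older's inequality, with one $\phi$-factor placed in an $L^\infty$-type norm via the $\sqrt{u}$-weighted Sobolev embedding and the $\rho$-factor controlled by $\|\rho/\sqrt{u}\cdot\langle x\rangle^{1/4-}\|$-type norms drawn from the $X_k$ scale, in direct analogy with the treatment of $\eps\rho\phi_{\psi\psi}\phi$ in the proof of (\ref{energy.1}); these produce the asserted $\eps\langle x\rangle^{-(1+)}E_{K_0}(x)$ bound.

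The principal obstacle is the algebraic observation that the specific weight $\langle\psi\rangle/u$ is what makes the $\p_x(1/u)$ commutator combine with the $A$-damping term to yield something proportional to $\Omega/\bar{u}^3$; once this identity is in view, Lemma \ref{Omega.lemma} supplies the required coercivity essentially for free. A secondary subtlety is the apparent singularity of $1/u$ at $\psi=0$, which is harmless because $u\sim\sqrt{\psi}$ and $\phi = u^2-\bar{u}^2\sim\psi$ near the boundary, so $\phi^2/u\sim\psi^{3/2}$ is integrable down to zero.
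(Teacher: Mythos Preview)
Your approach is correct and matches the paper's proof in all essential respects: the same multiplier $\phi\,\langle\psi\rangle/u$, the same cancellation of $u$ against $1/u$ in the diffusion, and the same mechanism whereby the $\p_x(1/u)$ commutator combines with the $A$-term to produce the signed quantity $\Omega$, whose nonnegativity (Lemma~\ref{Omega.lemma}) closes the estimate. Two minor bookkeeping differences are worth noting. First, the paper keeps the full $1/u^3$ in the denominator and writes the combined term as $\int \phi^2\langle\psi\rangle\,u^{-3}\mathring{\Omega}$ with $\mathring{\Omega}=\Omega+\Omega^R$, splitting off the nonlinear remainder $\Omega^R$ only at the level of the numerator; you instead linearize $u$ and $A$ first to isolate $\Omega/\bar{u}^3$ and then collect $\eps$-corrections. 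Both routes lead to the same list of error terms. Second, the paper's identity (\ref{div.id.1}) does not display a $\langle\psi\rangle''$ contribution from the diffusion (consistent with reading $\langle\psi\rangle$ as $1+\psi$), whereas you carry this term and absorb it; your absorption argument is sound, so this is harmless extra care rather than a discrepancy.
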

\begin{proof} We have the following identity 
\begin{align} \label{div.id.1}
\frac{\p_x}{2} \int \phi^2 \frac{1}{u} \langle \psi \rangle + \int \phi_\psi^2 \langle \psi \rangle + \int \phi^2 \langle \psi \rangle \frac{1}{2} \frac{u_x}{u^2}  + \int A \phi^2 \frac{\langle \psi \rangle}{u} = 0.
\end{align}

\noindent We group the latter two terms together via 
\begin{align*}
(\ref{div.id.1}.3) + (\ref{div.id.1}.4) =: \int \phi^2  \langle \psi \rangle \frac{1}{u^3} \mathring{\Omega} = \int \phi^2 \langle \psi \rangle \frac{1}{u^3} [\Omega + \Omega^R],
\end{align*}

\noindent where 
\begin{align*}
&\mathring{\Omega} = \frac{uu_x}{2} + u^2 A = \frac{uu_x}{2} - 2u^2 \frac{\bar{u}_{yy}}{\bar{u}(\bar{u} + u)}, \\
&\Omega = - \bar{u}_{yy} + \frac{1}{2} \bar{u} \bar{u}_x, \\
&\Omega^R := A \eps \phi + \frac{\bar{u}_{yy} \eps \rho}{2 \bar{u} + \eps \rho} + \frac{\eps \rho \bar{u}_x + \eps \bar{u} \rho_x + \eps^2 \rho \rho_x}{2}.
\end{align*}

By Lemma \ref{Omega.lemma}, the $\Omega$ contribution is positively signed. We thus need to estimate the nonlinear part in $\Omega^R$:  
\begin{align} \n
|\int \phi^2 \langle \psi \rangle \frac{1}{u^3} \Omega^R| \le & \eps \| \phi \sqrt{\langle \psi \rangle} \frac{1}{\sqrt{u}} \|_{L^2_\psi}^2 \Big[ \Big\| \frac{A \phi}{u^2} \Big\|_{L^\infty_\psi} +  \Big\| \frac{\bar{u}_{yy} \rho}{u^2(2\bar{u} + \eps \rho)} \Big\|_{L^\infty_\psi}  \\ \label{Omega.R}
& + \frac{1}{2} \Big\| \frac{\rho \bar{u}_x}{u^2} \Big\|_{L^\infty_\psi} + \frac{1}{2} \Big\| \frac{\rho_x \bar{u}}{u^2} \Big\|_{L^\infty_\psi} + \frac{\eps}{2} \Big\| \frac{\rho \rho_x}{u^2} \Big\|_{L^\infty_\psi} \Big] \\ \n
\lesssim & \eps \langle x \rangle^{-(1+)} E_{K_0}(x) \| \phi \sqrt{\psi} \frac{1}{\sqrt{u}} \|_{L^2_\psi}^2. 
\end{align}

We now proceed to prove the final inequality above after equation (\ref{Omega.R}) by estimating all five of the $L^\infty_\psi$ terms above. First, upon invoking (\ref{unif.0}.1),
\begin{align*}
\Big\| \frac{A \phi}{u^2} \Big\|_{L^\infty_\psi} \le & \| A \|_{L^\infty_\psi} \Big\| \frac{\phi}{u^2} \Big\|_{L^\infty_\psi} \lesssim  \langle x \rangle^{-1} \langle x \rangle^{-(0+)} E_{K_0}(x). 
\end{align*}

\noindent Above, we have also estimated $A$ via 
\begin{align} \n
\| A  \|_{L^\infty_\psi} \lesssim & \Big\| \frac{\bar{u}_{yy}}{\bar{u}(\bar{u} + u)} \Big\|_{L^\infty_\psi} \\ \label{est.A}
\lesssim & \Big\| \frac{\bar{u}_{yy}}{\eta^2} \chi(\eta \lesssim 1) \Big\|_{L^\infty_\psi} + \| \bar{u}_{yy} \chi(\eta \gtrsim 1) \|_{L^\infty_\psi} \lesssim \langle x \rangle^{-1}. 
\end{align}

Second, upon invoking (\ref{unif.0}.4) and that $u \sim \eta$, $\bar{u}_{yy} \sim \eta^2 \langle x \rangle^{-1}$ near $\eta = 0$, we estimate 
\begin{align*}
\Big\| \frac{\bar{u}_{yy} \rho}{u^2(2\bar{u} + \rho)} \Big\|_{L^\infty_\psi} \le \Big\| \frac{\bar{u}_{yy}}{u^2} \Big\|_{L^\infty_\psi} \Big\| \frac{\rho}{2\bar{u} + \rho} \Big\|_{L^\infty_\psi} \lesssim \langle x \rangle^{-1} \langle x \rangle^{-(0+)}E_{K_0}(x). 
\end{align*}

Third, again upon invoking (\ref{unif.0}.4), 
\begin{align*}
\Big\| \frac{\rho \bar{u}_x}{u^2} \Big\|_{L^\infty_\psi} \lesssim \Big\| \frac{\rho}{u} \Big\|_{L^\infty_\psi} \Big\| \frac{\bar{u}_x}{u} \Big\|_{L^\infty_\psi} \lesssim \langle x \rangle^{-(0+)} \langle x \rangle^{-1} E_{K_0}(x). 
\end{align*}

Fourth, again upon invoking (\ref{unif.0}.4),
\begin{align*}
\Big\| \frac{\rho_x \bar{u}}{u^2} \Big\|_{L^\infty_\psi} \lesssim \Big\| \frac{\bar{u}}{u} \Big\|_{L^\infty_\psi} \Big\| \frac{\rho_x}{u} \Big\|_{L^\infty_\psi} \lesssim \langle x \rangle^{-(1+)} E_{K_0}(x). 
\end{align*}

Fifth, again upon invoking (\ref{unif.0}.4),
\begin{align*}
\Big\| \frac{\rho \rho_x}{u^2} \Big\|_{L^\infty_\psi} \le \Big \| \frac{\rho}{u} \Big\|_{L^\infty_\psi} \Big\| \frac{\rho_x}{u} \Big\|_{L^\infty_\psi} \lesssim \langle x \rangle^{-(0+)} \langle x \rangle^{-(1+)} E_{K_0}(x). 
\end{align*}

\noindent Inserting these estimates into (\ref{Omega.R}) yields the estimate shown beneath (\ref{Omega.R}). This concludes the proof. 

\end{proof}

\section{Middle Tier: $H^k$ for $1 \le k \le K_1$}

\hspace{5 mm} At the $H^1$ level, the linearized equation changes and so requires a new treatment. Taking one $x$ derivative of (\ref{eps.nonlin}), we obtain 
\begin{align*}
\phi^{(1)}_x - u \phi^{(1)}_{\psi \psi} + A \phi^{(1)} - u^{(1)} \phi_{\psi \psi} + A_x \phi = 0. 
\end{align*}

\noindent The point here is that $u^{(1)}$ can be separated into $u^{(1)} =: \bar{u}^{(1)} + \rho^{(1)}$. While the $\rho^{(1)}$ contribution is quadratic, the $\bar{u}^{(1)}$ contribution is linear and highest order in $\phi$. To see this, we use the equation to rewrite $\phi_{\psi \psi}$ via: 
\begin{align} \label{H1.eqn}
\phi^{(1)}_x - u \phi^{(1)}_{\psi \psi} + A \phi^{(1)} - u^{(1)} \Big[ \frac{\phi^{(1)}}{u} + \frac{A\phi}{u} \Big] + A_x \phi = 0
\end{align}

\begin{lemma}[Energy Estimate] \label{L.E.E} Let $\phi$ solve the equation (\ref{eps.nonlin}). Let $0 < \delta < < 1$ be arbitrary. Then the following inequality is valid: 
\begin{align} \n
\frac{\p_x}{2} \int |\phi^{(1)}|^2  \langle x \rangle^{2-2\sigma_1} &+ \int u |\phi^{(1)}_\psi|^2 \langle x \rangle^{2-2\sigma_1} \\ \n
\le & \eps \langle x \rangle^{-(1+)} E_{K_0}(x)  + \delta \langle x \rangle^{-(1+)} \| \phi^{(1)} \langle x \rangle^{1-\sigma_1} \|_{L^2_\psi}^2\\ \label{rhs.1}
& + C_\delta \langle x \rangle^{-(1+)}E_0(x) + C_\delta \langle x \rangle^{-(0+)}I_0(x). 
\end{align}
\end{lemma}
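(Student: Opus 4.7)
The approach is a weighted $L^2$ energy estimate: test (\ref{H1.eqn}) against $\phi^{(1)} \langle x\rangle^{2-2\sigma_1}$ in $L^2_\psi$. Writing $u = \bar{u} + \eps\rho$ and $u^{(1)} = \bar{u}_x + \eps\rho_x$, performing a double integration by parts on the diffusion term, and transferring $\p_x$ onto the weight produces the identity
\begin{align*}
\frac{\p_x}{2}\int|\phi^{(1)}|^2 \langle x\rangle^{2-2\sigma_1} + \int u|\phi^{(1)}_\psi|^2 \langle x\rangle^{2-2\sigma_1} + \int\frac{-\bar{u}_x}{u}|\phi^{(1)}|^2 \langle x\rangle^{2-2\sigma_1} + R_+ = R_{\mathrm{RHS}},
\end{align*}
where $R_+$ collects two further good-sign terms and $R_{\mathrm{RHS}}$ the remainders. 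The coercive $-\bar{u}_x/u$ contribution on the LHS is the new ingredient over the $L^2$ energy estimate: it comes from $-\bar{u}^{(1)}\phi^{(1)}/u\cdot\phi^{(1)} = -\bar{u}_x |\phi^{(1)}|^2/u$ and is non-negative \textbf{precisely because} $\p_x \bar{u} < 0$ for Blasius. The terms in $R_+$ are $\int A|\phi^{(1)}|^2 \ge 0$ and $-\frac12\int \bar{u}_{\psi\psi}|\phi^{(1)}|^2 \ge 0$ (from $\bar{u}_{yy}\le 0$), both familiar from the baseline estimate and both discarded.

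The remainders in $R_{\mathrm{RHS}}$ are of three types. First, the quadratic $\eps\rho$-perturbations arising from $u = \bar{u}+\eps\rho$ in the diffusion, and from the first-order contribution $\eps\rho_x/u$, are bounded verbatim as in Lemma \ref{division} using the uniform bounds (\ref{unif.0}), giving $\eps\langle x\rangle^{-(1+)}E_{K_0}(x)$. Second, the cross terms $-\int(u_x A/u)\phi\phi^{(1)}\langle x\rangle^{2-2\sigma_1}$ and $\int A_x \phi\phi^{(1)}\langle x\rangle^{2-2\sigma_1}$ couple $\phi$ and $\phi^{(1)}$; using $|A_x|, |u_x A/u|\lesssim \langle x\rangle^{-2}$ near $\eta = 0$ with exponential decay in $\eta$, as in (\ref{est.A}), Cauchy-Schwarz splits these into a $\phi^{(1)}$ half producing $\delta \langle x\rangle^{-(1+)}\|\phi^{(1)}\langle x\rangle^{1-\sigma_1}\|^2_{L^2_\psi}$ and a $\phi$ half producing $C_\delta \langle x\rangle^{-(1+)}\|\phi\|^2_{L^2_\psi}\le C_\delta \langle x\rangle^{-(1+)}E_0(x)$; when one factor of the Cauchy-Schwarz is carried by the $X_0$-dissipation $\sqrt{\bar{u}}\phi_\psi$, the resulting quantity is integrable in $x$ and enters as $C_\delta \langle x\rangle^{-(0+)} I_0(x)$. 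Third, the weight-derivative defect $\frac12 \int |\phi^{(1)}|^2 \p_x \langle x\rangle^{2-2\sigma_1}$, produced by the $\p_x$ transfer.

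The essential obstacle is this third defect, which is comparable to $(1-\sigma_1)\langle x\rangle^{-1}\tfrac{x}{\langle x\rangle}\|\phi^{(1)}\langle x\rangle^{1-\sigma_1}\|^2_{L^2_\psi}$: its $\langle x\rangle^{-1}$ rate is strictly slower than the $\langle x\rangle^{-(1+)}$ rate of the $\delta$-term on the RHS, so no choice of $\delta$ can absorb it via Young's inequality. The strategy is instead to absorb it into the coercive $\int(-\bar{u}_x/u)|\phi^{(1)}|^2 \langle x\rangle^{2-2\sigma_1}$ term on the LHS. Near $\eta = 0$, L'Hopital applied to $\bar{u}_x = -\eta f''(\eta)/(2(x+1))$ and $\bar{u}\sim f''(0)\eta$ gives $-\bar{u}_x/\bar{u} \to 1/(2\langle x\rangle)$, a positive lower bound of the correct scale; the perturbation $u=\bar{u}+\eps\rho$ changes this only by $O(\eps)$. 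For the bulk $\eta \gtrsim 1$, where $-\bar{u}_x/u$ decays exponentially in $\eta$ while $u \to 1$, the corresponding portion of the defect is dominated by the dissipation $\int u|\phi^{(1)}_\psi|^2 \langle x\rangle^{2-2\sigma_1}$ via a Hardy/Poincaré-type inequality in the $\eta$ variable. The constants are balanced by choosing $\sigma_1 > 0$ strictly (not arbitrarily) small in a manner compatible with the sequence $\{\sigma_k\}$. This absorption step, entirely reliant on the Blasius-specific sign $\p_x \bar{u} < 0$, is what distinguishes the middle-tier estimate from Step 1 and closes the bound (\ref{rhs.1}).
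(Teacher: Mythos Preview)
Your identification of the linearized structure, the positive term $-\bar u_x/u$, and the treatment of the $\eps\rho$-perturbations and of the cross terms $(\ref{H1.eqn}.5)$--$(\ref{H1.eqn}.6)$ all track the paper closely. The genuine gap is in how you dispose of the weight-derivative defect
\[
(1-\sigma_1)\,\langle x\rangle^{1-2\sigma_1}\int|\phi^{(1)}|^2.
\]
Your plan is to absorb this into the coercive term $\int(-\bar u_x/u)|\phi^{(1)}|^2\langle x\rangle^{2-2\sigma_1}$ near $\eta=0$ and into the dissipation $\int u|\phi^{(1)}_\psi|^2\langle x\rangle^{2-2\sigma_1}$ for $\eta\gtrsim 1$. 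Neither absorption actually closes. Near $\eta=0$ you correctly compute $-\bar u_x/\bar u\to \tfrac{1}{2\langle x\rangle}$, but the defect carries the coefficient $(1-\sigma_1)\langle x\rangle^{-1}$; since the whole point of the $X$-norm is that the $\sigma_k$ are arbitrarily small losses (this is what produces the $\kappa$ in Theorem~\ref{thm.main}), one has $1-\sigma_1>\tfrac12$ and the defect strictly dominates the coercive term. Forcing $\sigma_1>\tfrac12$ would destroy the decay rates the lemma is supposed to feed into. In the far field $\eta\gtrsim 1$ the situation is worse: $-\bar u_x/u$ decays exponentially in $\eta$, and there is no Poincar\'e inequality on the half-line $\{\psi\gtrsim\sqrt{x}\}$ with constant $O(\langle x\rangle)$ that would let the dissipation $\int|\phi^{(1)}_\psi|^2$ control $\langle x\rangle^{-1}\int|\phi^{(1)}|^2$.

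The paper handles the defect by a different mechanism: \emph{induction on weights}. One first runs the estimate with multiplier $\phi^{(1)}$ (no weight, hence no defect), then with an intermediate weight $\langle x\rangle^{1-2\omega_1^1}$ for some $\omega_1^1<\sigma_1$; at that step the defect is $\int|\phi^{(1)}|^2\langle x\rangle^{-2\omega_1^1}$, which is controlled by the already-established unweighted bound times $\langle x\rangle^{-(0+)}$. Only then does one apply the top weight $\langle x\rangle^{2-2\sigma_1}$, and the resulting defect $\int|\phi^{(1)}|^2\langle x\rangle^{1-2\sigma_1}\le \langle x\rangle^{-(0+)}\int|\phi^{(1)}|^2\langle x\rangle^{1-2\omega_1^1}$ is controlled by the intermediate step. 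The positive term $-\bar u_x/u$ is used in the paper only for its sign (it is discarded), not as a sink for the defect.
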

\begin{proof} \textit{Step 1: Induction on Weights}
We use induction on the weights, which amounts to applying three successive weighted multipliers: 
\begin{align*}
\phi^{(1)}, \phi^{(1)} \langle x \rangle^{1-2\omega_1^1}, \text{ and } \phi^{(1)} \langle x \rangle^{2-2\omega_1^2},
\end{align*} 

\noindent where $\omega_1^1 < \omega_1^2 = \sigma_{1}$. Call the right-hand side of (\ref{rhs.1}) $R(x)$. One obtains the following two inequalities: 
\begin{align} \label{induct.1}
&\frac{\p_x}{2} \int |\phi^{(1)}|^2 + \int u |\phi^{(1)}_\psi|^2 \lesssim \langle x \rangle^{-(2+)} R(x), \\ \label{induct.2}
&\frac{\p_x}{2} \int |\phi^{(1)}|^2 \langle x \rangle^{1-2\omega^1_1} + \int u |\phi^{(1)}_\psi|^2 \langle x \rangle^{1-2\omega^1_1} \lesssim \langle x \rangle^{-(1+)} R(x) + \sup_x \| \phi^{(1)} \|_{L^2_\psi}^2 \langle x \rangle^{-(0+)}. 
\end{align}

\noindent Integrating from $x = \infty$ one obtains 
\begin{align} \label{induct.3}
\sup_x \int |\phi^{(1)}|^2 \langle x \rangle^{1-2\omega^1_1} \lesssim \langle x \rangle^{-(0+)} R(x). 
\end{align}

The establishment of (\ref{induct.1}) and (\ref{induct.2}) is in an identical fashion to the top order weight, so we omit it and just focus on the top order weight. We assume, thus, inductively that (\ref{induct.3}) is true.

\textit{Step 2: Top order weight:}
We apply the multiplier $\phi^{(1)} \langle x \rangle^{2-2\sigma_1}$ to (\ref{H1.eqn}). The first three terms have been treated already, with the modification that the $\p_x$ term contributes a factor of:  
\begin{align*}
\int |\phi^{(1)}|^2 \langle x \rangle^{1-2\sigma_1} \le \langle x \rangle^{-(0+)} \int |\phi^{(1)}|^2 \langle x \rangle^{1-2\omega^1_1} \lesssim \langle x \rangle^{-(0+)}R(x). 
\end{align*}

The main new leading order contribution is the fourth, which is
\begin{align} \label{split.negative}
- \int \frac{\bar{u}_x}{u} |\phi^{(1)}|^2 \langle x \rangle^{2-2\sigma_1} - \int \frac{\eps \rho_x}{u} |\phi^{(1)}|^2 \langle x \rangle^{2-2\sigma_1}. 
\end{align}

\noindent The key point is that the first term above is nonnegative because $\bar{u}_x < 0$ for Blasius solutions. 
\begin{align*}
- \int \frac{\bar{u}_x}{u} |\phi^{(1)}|^2 \langle x \rangle^{2 - 2\sigma_1} > 0. 
\end{align*}

\noindent We estimate the $\rho$ contribution from (\ref{split.negative}), which enables us to use the smallness of $\eps$: 
\begin{align*}
|(\ref{split.negative}.2)| \lesssim & \eps \Big\| \frac{\rho^{(1)}}{u} \Big\|_{L^\infty} \| \phi^{(1)} \langle x \rangle^{1-\sigma_1} \|_{L^2_\psi}^2 \lesssim  \eps \langle x \rangle^{-(1+)} \| \phi^{(1)} \langle x \rangle^{1-\sigma_1} \|_{L^2_\psi}^2 \\
\lesssim & \eps \langle x \rangle^{-(1+)} E_{K_0}(x), 
\end{align*}

\noindent where we have invoked estimate (\ref{unif.0}.4). 

We now need to estimate (\ref{H1.eqn}.5):
\begin{align} \n
|\int u^{(1)} \frac{1}{u} A &\phi \phi^{(1)} \langle x \rangle^{2-2\sigma_1}| \\ \label{pert.1}
& \le |\int \frac{\bar{u}^{(1)}}{u} A \phi \phi^{(1)} \langle x \rangle^{2-2\sigma_1}| + |\int \frac{\rho^{(1)}}{u} A \phi \phi^{(1)} \langle x \rangle^{2-2\sigma_1}|.
\end{align}

For the first term in (\ref{pert.1}), we do not have any smallness, so we take advantage of the fact that one of the terms, $\phi$, is lower order: 
\begin{align*}
|(\ref{pert.1}.1)| \lesssim & \langle x \rangle^{-(1+)} \Big\| \frac{\bar{u}^{(1)}}{u} \langle x \rangle \Big\|_{L^\infty_\psi} \| A \langle x \rangle \|_{L^\infty_\psi} \| \phi \|_{L^2_\psi} \| \phi^{(1)} \langle x \rangle^{1-\sigma_1} \|_{L^2_\psi} \\
\lesssim & C_\delta \langle x \rangle^{-(1+)} \| \phi \|_{L^2_\psi}^2 + \delta \langle x \rangle^{-(1+)} \| \phi^{(1)} \langle x \rangle^{1-\sigma_1} \|_{L^2_\psi}^2 \\
\lesssim & C_\delta \langle x \rangle^{-(1+)} E_0(x) + \delta \langle x \rangle^{-(1+)} \| \phi^{(1)} \langle x \rangle^{1-\sigma_1} \|_{L^2_\psi}^2. 
\end{align*}

\noindent Above, we have used the estimate $\bar{u} \gtrsim \eta$ and $u \gtrsim \eta$ on $\eta \lesssim 1$ and (\ref{est.A}). We have also used that $|\frac{\bar{u}_x}{\bar{u}}| \lesssim \langle x \rangle^{-1}$. 

For the second term from (\ref{pert.1}), we use the smallness of $\eps$, as this term is cubic. We treat two different cases based on the location of $\xi$. 
\begin{align*}
|(\ref{pert.1}.2)| \le |\int \frac{\rho^{(1)}}{u} A \phi \phi^{(1)} \langle x \rangle^{2-2\sigma_1} \chi(\xi) | + |\int \frac{\rho^{(1)}}{u} A \phi \phi^{(1)} \langle x \rangle^{2-2\sigma_1} \chi(\xi)^C |.
\end{align*}

First, 
\begin{align*}
|(\ref{pert.1}.2.1)| \le &\langle x \rangle^{-(\frac{1}{2}+)} \Big\| \frac{\phi}{\sqrt{u}} \langle x \rangle^{\frac{1}{4}-} \Big\|_{L^\infty_\psi(\xi \lesssim 1)} \| \rho^{(1)} u \langle x \rangle^{1-\sigma_1} \|_{L^2_\psi(\xi \lesssim 1)} \\
& \times \Big\| \frac{\phi^{(1)}}{u^{\frac{3}{2}}} \langle x \rangle^{\frac{1}{2}-} \Big\|_{L^2_\psi(\xi \lesssim 1)} \| A \langle x \rangle \|_{L^\infty_\psi} \\
\lesssim & \eps^3 \langle x \rangle^{-(0+)} I_{K_0}(x) + \eps^3 \langle x \rangle^{-(1+)} E_{K_0}(x). 
\end{align*}

\noindent Above, we have performed the Hardy-type inequality, 
\begin{align*}
\Big\| \frac{\phi^{(1)}}{u^{\frac{3}{2}}} \Big\|_{L^2_\psi(\xi \lesssim 1)} \lesssim & \langle x \rangle^{\frac{3}{8}} \Big\| \frac{\phi^{(1)}}{\psi^{\frac{3}{4}}} \Big\|_{L^2_\psi(\xi \lesssim 1)}  \\
\lesssim & \langle x \rangle^{\frac{3}{8}} \Big[ \| \psi^{\frac{1}{4}} \phi^{(1)}_\psi \|_{L^2_\psi(\xi \lesssim 1)} + \| \phi^{(1)} \langle x \rangle^{-\frac{3}{8}} \|_{L^2_\psi(\xi \sim 1)} \Big] \\
\lesssim & \langle x \rangle^{\frac{1}{2}} \| \sqrt{\bar{u}} \phi^{(1)}_\psi \|_{L^2_\psi(\xi \lesssim 1)} + \| \phi^{(1)} \|_{L^2_\psi(\xi \sim 1)}. 
\end{align*}

The sixth term, (\ref{H1.eqn}.6), is almost exactly analogous to the fifth term which we just treated. The only exception is the control of the nonlinear part of $A_x$, which we expand here: 
\begin{align} \label{exp.A.1}
A_x = \frac{\bar{u}_{yyx}}{\bar{u}(\bar{u} + u)} + \frac{\bar{u}_{yy}}{\bar{u}(\bar{u} + u)} \frac{4 \bar{u} \bar{u}_x + \bar{u} \rho_x}{\bar{u}(\bar{u} + u)}.
\end{align}

\noindent We thus estimate the nonlinear term: 
\begin{align*}
\Big\| \frac{\rho_x}{u} \Big\|_{L^\infty_\psi} \lesssim \Big\| \frac{\phi^{(1)}}{u^2} \Big\|_{L^\infty_\psi} \lesssim \langle x \rangle^{-(\frac{5}{4}-)} E_{K_0}(x). 
\end{align*}

This concludes the proof. 

\end{proof}

\begin{lemma}[Division Estimate] \label{L.D.E} Let $\phi$ be a solution to (\ref{eps.nonlin}). Then the following estimate is valid: 
\begin{align} \n
\frac{\p_x}{2} \int |\phi^{(1)}|^2 \frac{\langle \psi \rangle}{u} \langle x \rangle^{2-2\sigma_1} +& \int |\phi^{(1)}_\psi|^2 \langle \psi \rangle \langle x \rangle^{2-2\sigma_1} \\ \n
 \le & \delta \langle x \rangle^{-(1+)} \| \phi^{(1)} \frac{\sqrt{\langle \psi \rangle}}{\sqrt{u}} \langle x \rangle^{1-\sigma} \|_{L^2_\psi}^2 + C_\delta \langle x \rangle^{-(1+)} E_0(x) \\ \label{div.mid}
 & + \eps \Big[ \langle x \rangle^{-(1+)} E_{K_0}(x) + \langle x \rangle^{-(0+)}I_{K_0}(x) \Big].
\end{align}
\end{lemma}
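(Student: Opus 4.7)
The plan is to mirror the proof of Lemma \ref{division} but now applied to the linearized equation (\ref{H1.eqn}) at the $H^1$ level. Concretely, I would multiply (\ref{H1.eqn}) by the weighted multiplier $\phi^{(1)} \frac{\langle \psi \rangle}{u} \langle x \rangle^{2-2\sigma_1}$ and integrate. As in Lemma \ref{L.E.E}, I would first handle the induction on weights by running the argument with the three multipliers $\phi^{(1)}\frac{\langle \psi \rangle}{u}$, $\phi^{(1)}\frac{\langle \psi \rangle}{u}\langle x \rangle^{1-2\omega_1^1}$, and finally $\phi^{(1)}\frac{\langle \psi \rangle}{u}\langle x \rangle^{2-2\sigma_1}$; the two lower weights are integrable at $x = \infty$ and give supremum control on the lower-weight norms which can then be fed into the commutator term $\int |\phi^{(1)}|^2 \frac{\langle \psi \rangle}{u} \langle x \rangle^{1-2\sigma_1}$ produced by differentiating the $\langle x \rangle^{2-2\sigma_1}$ in time.

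The main body of the estimate treats the three structural terms of (\ref{H1.eqn}) exactly as in Lemma \ref{division}. The $u\phi^{(1)}_{\psi\psi}$ term integrates by parts to produce the good quantity $\int |\phi^{(1)}_\psi|^2 \langle \psi \rangle \langle x \rangle^{2-2\sigma_1}$ (the factor $u$ cancels against the $\frac{1}{u}$ in the multiplier, which is precisely why the division weight is natural here). The time-derivative commutator $-\tfrac{1}{2}\frac{u_x}{u^2}|\phi^{(1)}|^2$, the $A/u$ term, and the linear piece of term (\ref{H1.eqn}.4) with $\bar u^{(1)} = \bar u_x$ combine to produce
\begin{align*}
\int |\phi^{(1)}|^2 \frac{\langle \psi \rangle}{u^3}\bigl[\Omega + \Omega^R\bigr]\langle x \rangle^{2-2\sigma_1},
\end{align*}
where $\Omega = -\bar{u}_{yy} + \tfrac{1}{2}\bar{u}\bar{u}_x$ is nonnegative globally by Lemma \ref{Omega.lemma}, and the additional contribution from $-\bar{u}_x \frac{|\phi^{(1)}|^2}{u}\frac{\langle\psi\rangle}{u}$ is also nonnegative because $\bar{u}_x < 0$ for Blasius (this is the same sign favorable sign used in Lemma \ref{L.E.E}). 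The remainder $\Omega^R$ collects the quadratic/cubic pieces in $\rho$, $\rho^{(1)}$, $\rho_x$, which are estimated exactly as in the display (\ref{Omega.R}) of Lemma \ref{division}, using the uniform bounds (\ref{unif.0}) and picking up an $\eps$ from each nonlinearity.

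The genuinely new contributions are the two cross terms $-\int u^{(1)} \frac{A\phi}{u} \phi^{(1)} \frac{\langle\psi\rangle}{u}\langle x \rangle^{2-2\sigma_1}$ and $\int A_x \phi \phi^{(1)} \frac{\langle\psi\rangle}{u}\langle x \rangle^{2-2\sigma_1}$ arising from the extra terms in (\ref{H1.eqn}) and from (\ref{exp.A.1}). For each, I split $u^{(1)} = \bar{u}^{(1)} + \rho^{(1)}$ and $A_x$ into its linear and nonlinear parts. The linear pieces contain $\phi$ (which is of lower $x$-order) multiplied by $\phi^{(1)}$; I estimate $\phi$ in $L^2_\psi$ contributing $E_0(x)$, I extract a factor of $\langle x \rangle^{-1}$ from $\bar{u}^{(1)}/u$ or from the expansion of $A_x$ using the Blasius bounds $|\bar{u}_x/\bar{u}| \lesssim \langle x \rangle^{-1}$ and (\ref{est.A}), and I close via Young's inequality with a small constant $\delta$ absorbing $\delta \langle x \rangle^{-(1+)}\|\phi^{(1)}\frac{\sqrt{\langle\psi\rangle}}{\sqrt u}\langle x \rangle^{1-\sigma_1}\|_{L^2_\psi}^2$ on the right-hand side. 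The nonlinear pieces in $\rho^{(1)}, \rho_x$ come with a prefactor $\eps$ and are handled by the same Hardy-type manipulation as after (\ref{pert.1}.2.1) in the proof of Lemma \ref{L.E.E}, trading the $\frac{1}{u}$ weight for $\psi^{-1}$ near the boundary and using $\sqrt{\bar u} \phi^{(1)}_\psi$ control.

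The main obstacle I anticipate is the cross term $-\int \bar{u}^{(1)}\frac{A\phi}{u}\phi^{(1)}\frac{\langle\psi\rangle}{u}\langle x\rangle^{2-2\sigma_1}$, which is \emph{linear} (so no $\eps$ smallness is available) and places two copies of $1/u$ against the measure. The saving graces are: (i) $A \lesssim \langle x \rangle^{-1}$ uniformly and $A$ is bounded in a neighborhood of the boundary by $\bar{u}_{yy}/\eta^2 \cdot \chi(\eta \lesssim 1)$, so $A/u^2$ remains controlled, and (ii) $\phi$ appears to the first power at the lowest $x$-derivative order, which is exactly what allows for Young's inequality against the absorbable $\delta$ term while transferring the rest to the already-controlled $E_0$. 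Once this term is closed, all remaining contributions are structurally analogous to those already appearing in Lemmas \ref{division} and \ref{L.E.E}, and the estimate (\ref{div.mid}) follows.
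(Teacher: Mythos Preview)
Your proposal is correct and follows essentially the same route as the paper: multiply (\ref{H1.eqn}) by $\phi^{(1)}\frac{\langle\psi\rangle}{u}\langle x\rangle^{2-2\sigma_1}$, treat the first three terms via the $\Omega$-structure of Lemma \ref{division}, observe that the fourth term contributes an additional nonnegative piece from $\bar u_x<0$, and close the cross terms (\ref{H1.eqn}.5)--(\ref{H1.eqn}.6) with Young's inequality against $E_0$ and $\eps E_{K_0}$. The only remark is that the ``main obstacle'' you flag is milder than you suggest: in $-\int \frac{u^{(1)}}{u}A\phi\,\phi^{(1)}\frac{\langle\psi\rangle}{u}\langle x\rangle^{2-2\sigma_1}$, the factor $\frac{u^{(1)}}{u}$ is bounded in $L^\infty_\psi$ by $\langle x\rangle^{-1}$ via (\ref{unif.0}.5), $\|A\|_{L^\infty_\psi}\lesssim\langle x\rangle^{-1}$ by (\ref{est.A}), and the remaining $\frac{\langle\psi\rangle}{u}$ splits symmetrically as $\frac{\sqrt{\langle\psi\rangle}}{\sqrt u}$ on each of $\phi$ and $\phi^{(1)}$, so no Hardy manipulation is needed here.
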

\begin{proof} We take inner product of (\ref{H1.eqn}) with $\phi^{(1)} \frac{\langle \psi \rangle}{u} \langle x \rangle^{2-2\sigma_1}$. The first three terms from (\ref{H1.eqn}) work in the same manner as Lemma \ref{division}, while the fourth is a further positive contribution. 

We thus estimate (\ref{H1.eqn}.5).
\begin{align} \n
&|- \int \frac{u^{(1)}}{u} A \phi \phi^{(1)} \frac{\langle \psi \rangle}{u} \langle x \rangle^{2-2\sigma_1}| \\
&\lesssim \langle x \rangle^{-(1+)} \Big\| \frac{u^{(1)}}{u} \langle x \rangle \Big\|_{L^\infty_\psi} \| A \langle x \rangle \|_{L^\infty_\psi} \Big\| \frac{\phi}{\sqrt{u}} \sqrt{\langle \psi \rangle}  \|_{L^2_\psi} \| \phi^{(1)} \frac{\langle \psi \rangle}{ \sqrt{u}} \langle x \rangle^{1-\sigma_1} \|_{L^2_\psi} \\ \n
&\lesssim C_\delta \langle x \rangle^{-(1+)} \| \phi \|_{X_1}^2 + \delta \langle x \rangle^{-(1+)} \| \phi^{(1)} \frac{\sqrt{\langle \psi \rangle}}{\sqrt{u}} \langle x \rangle^{1-\sigma_1} \|_{L^2_\psi}^2 + \eps \langle x \rangle^{-(1+)} E_{K_0}(x).
\end{align}

\noindent We have used estimate (\ref{est.A}) for $\|A\|_{L^\infty_\psi}$ and (\ref{unif.0}.5) for $\|\frac{u^{(j)}}{u} \|_{L^\infty_\psi}$.

The sixth term, (\ref{H1.eqn}.6), works almost exactly analogously to the previous term. This concludes the proof. 
\end{proof}

A nearly identical sequence of estimates is performed for the $2$ through $K_1$ order of $\p_x$: 
\begin{lemma}[Energy Estimate] Let $2 \le k \le K_1 << K_0$, and let $\phi$ solve the equation (\ref{eps.nonlin}). Let $0 < \delta << 1$ be arbitrary. Then the following inequality is valid: 
\begin{align} \n
\frac{\p_x}{2} \int |\phi^{(k)}|^2 \langle x \rangle^{2(k-\sigma_k)} &+ \int u |\phi^{(k)}_\psi|^2 \langle x \rangle^{2(k-\sigma_k)} \\ \n
\lesssim &\eps \langle x \rangle^{-(1+)} E_{K_0}(x) + \delta \langle x \rangle^{-(1+)} \| \phi^{(k)} \langle x \rangle^{k-\sigma_k} \|_{L^2_\psi}^2 \\ \n
& + C_\delta \langle  x \rangle^{-(1+)} E_{ k - 1}(x) + C_\delta \langle x \rangle^{-(0+)} I_{ k - 1}(x). 
\end{align}
\end{lemma}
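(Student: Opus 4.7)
The plan is to mirror the induction-on-weights strategy of Lemma \ref{L.E.E}, but accounting for the fact that differentiating (\ref{eps.nonlin}) $k$ times in $x$ produces many commutator terms. First, I would derive a clean evolution equation for $\phi^{(k)}$. Applying $\p_x^k$ to (\ref{eps.nonlin}) and using Leibniz gives
\begin{align*}
\phi^{(k)}_x - u \phi^{(k)}_{\psi\psi} + A \phi^{(k)} = \sum_{j=1}^{k} \binom{k}{j} u^{(j)} \phi^{(k-j)}_{\psi\psi} - \sum_{j=1}^{k} \binom{k}{j} A^{(j)} \phi^{(k-j)}.
\end{align*}
To avoid losing a derivative in $\psi$, I would substitute $\phi^{(k-j)}_{\psi\psi}$ using the PDE at lower orders, expressing it iteratively as a combination of $\phi^{(k-j+1)}, \phi^{(k-j)}, \ldots$ divided by appropriate powers of $u$; this is the $k$-th generalization of the rewrite that produced (\ref{H1.eqn}). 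The top-order linear-in-$\phi^{(k)}$ contribution is then $\frac{u^{(1)}}{u}\phi^{(k)}$, which splits as in (\ref{split.negative}) into a favorable Blasius piece $-\frac{\bar{u}_x}{u}|\phi^{(k)}|^2$ (nonnegative since $\bar{u}_x<0$) and a small $\eps \rho^{(1)}$ piece.

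The second step is to apply the multiplier $\phi^{(k)} \langle x \rangle^{2(k-\sigma_k)}$, after first carrying out the same induction on weights as in (\ref{induct.1})--(\ref{induct.3}) at intermediate exponents $\omega_k^1 < \omega_k^2 = \sigma_k$. The diffusion term produces $\int u |\phi^{(k)}_\psi|^2 \langle x \rangle^{2(k-\sigma_k)}$ upon integration by parts; the boundary contribution $\int \bar{u}_{\psi\psi}|\phi^{(k)}|^2$ is sign-favorable by concavity of $\bar{u}$ in $\psi$, as in the $L^2$ energy estimate. The $\p_x$ falling on the weight $\langle x \rangle^{2(k-\sigma_k)}$ produces $\int |\phi^{(k)}|^2 \langle x \rangle^{2(k-\sigma_k)-1}$, which I would dominate by $\langle x \rangle^{-(0+)}$ times the inductive bound $\sup_x \|\phi^{(k)} \langle x \rangle^{k-\omega_k^1}\|_{L^2_\psi}^2$.

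It remains to dispatch the commutator terms. Linear Blasius contributions $\bar{u}^{(j)} \phi^{(k-j+1)} \phi^{(k)}$ together with their $A^{(j)} \phi^{(k-j)} \phi^{(k)}$ analogues (for $j \ge 1$) are handled by Cauchy--Schwarz, placing the strictly lower-order factor (controlled by $E_{k-1}$ or $I_{k-1}$) against $\|\phi^{(k)} \langle x \rangle^{k-\sigma_k}\|_{L^2_\psi}$; this produces the $C_\delta \langle x \rangle^{-(1+)} E_{k-1}$ and $C_\delta \langle x \rangle^{-(0+)} I_{k-1}$ terms, at the cost of the $\delta \|\phi^{(k)}\langle x \rangle^{k-\sigma_k}\|_{L^2_\psi}^2$ loss. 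The weight distribution uses $|\bar{u}^{(j)}| \lesssim \langle x \rangle^{-j}$ and the pointwise estimates on $A^{(j)}$ obtained by further differentiating the expansion in (\ref{exp.A.1}), together with $\bar{u} \gtrsim \eta$ on $\{\eta \lesssim 1\}$. The nonlinear $\rho^{(j)}$ pieces carry the factor $\eps$ and are handled by a Hardy-type argument as in (\ref{pert.1}.2), splitting the integration region into $\{\xi \lesssim 1\}$ and $\{\xi \gtrsim 1\}$ and invoking the uniform estimates referenced in (\ref{unif.0}); since $k \le K_1 \ll K_0$, one can afford to charge the resulting high-order factors to $E_{K_0}$ or $I_{K_0}$ via the tame bound.

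The principal obstacle is the combinatorial bookkeeping. The Leibniz expansion of $\p_x^k$ of a quasilinear PDE produces mixed products of the form $\bar{u}^{(j_1)} \rho^{(j_2)} \phi^{(j_3)}_{\psi\psi}$ together with $A$-analogues, and one must match the $x$-weight $\langle x \rangle^{2(k-\sigma_k)}$ against the pointwise decay of each $\bar{u}^{(j)}$ and the norm weights carried by $X_j$ so that every piece fits into one of the four target categories on the right-hand side. The strictly increasing choice of $\sigma_k$ is exactly what provides a little slack in each weighted term to close this bookkeeping, and the tame hypothesis $k \le K_1 \ll K_0$ is what licenses the $L^\infty$ placement of any factor of intermediate order without exceeding the available regularity budget.
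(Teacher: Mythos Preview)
Your proposal is correct and follows essentially the same approach as the paper, which in fact omits the proof entirely with the remark that it is ``essentially identical to the proofs of Lemmas \ref{L.E.E} and \ref{L.D.E}.'' Your outline---substituting $\phi^{(k-j)}_{\psi\psi}$ via the lower-order equation to extract the sign-favorable $-\frac{\bar u_x}{u}|\phi^{(k)}|^2$ term, running the induction-on-weights argument, and treating the remaining commutators by Cauchy--Schwarz (linear pieces to $E_{k-1},I_{k-1}$; nonlinear $\eps\rho^{(j)}$ pieces to $E_{K_0}$ using $k\le K_1\ll K_0$)---is exactly the intended generalization of the $k=1$ argument.
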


\begin{lemma}[Division Estimate] Let $2 \le k \le K_1 << K_0$, and let $\phi$ solve the equation (\ref{eps.nonlin}). Let $0 < \delta << 1$ be arbitrary. Then the following inequality is valid: 
\begin{align} \n
\frac{\p_x}{2} \int |\phi^{(k)}|^2 \frac{\langle \psi \rangle}{u} \langle x \rangle^{2(k-\sigma_k)} &+ \int |\phi^{(k)}_\psi|^2 \langle \psi \rangle \langle x \rangle^{2(k-\sigma_k)} \\ \n
\lesssim  \delta \langle x \rangle^{-(1+)} E_k(x) &+ C_\delta \langle x \rangle^{-(1+)} E_{ k - 1}(x) + C_\delta \langle x \rangle^{-(0+)}I_{k-1}(x) \\ \n
& + \eps \langle x \rangle^{-(1+)} E_{K_0}(x) + \eps \langle x \rangle^{-(0+)} I_{K_0}(x). 
\end{align}
\end{lemma}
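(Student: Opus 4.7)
The plan is to test the equation obtained by differentiating (\ref{eps.nonlin}) $k$ times in $x$ against the multiplier $\phi^{(k)} \frac{\langle\psi\rangle}{u}\langle x\rangle^{2(k-\sigma_k)}$, mimicking the structure of Lemma \ref{L.D.E}. First, writing out the $\p_x^k$-differentiated PDE and using (\ref{eps.nonlin}) itself to replace each appearance of $\phi^{(j)}_{\psi\psi}$ in the Leibniz expansion by $(\phi^{(j+1)} + A\phi^{(j)})/u$ plus nonlinear pieces, exactly as in the passage from (\ref{eps.nonlin}) to (\ref{H1.eqn}), one arrives at the schematic identity
$$\phi^{(k)}_x - u\phi^{(k)}_{\psi\psi} + A\phi^{(k)} - \frac{u^{(1)}}{u}\phi^{(k)} + \mathcal{R}_k = 0,$$
where $\mathcal{R}_k$ is a commutator polynomial in $\phi^{(j)}$, $j \le k-1$, and in $x$-derivatives of $u$ and $A$. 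The four explicit terms on the left-hand side are handled exactly as in Lemmas \ref{division} and \ref{L.D.E}: the $\p_x$ and $-u\phi^{(k)}_{\psi\psi}$ contributions produce the left-hand side of (\ref{div.mid}) up to a lower-weight $\p_x$-remainder, swept into $E_{k-1}$ by an induction-on-weights identical to Step 1 of Lemma \ref{L.E.E}; the $A\phi^{(k)}$ term together with the $u_x/u^2$ piece collapses into $\int |\phi^{(k)}|^2 \frac{\langle\psi\rangle}{u^3}[\Omega + \Omega^R]$, whose $\Omega$-part is nonnegative by Lemma \ref{Omega.lemma} and whose $\Omega^R$-part is bounded by $\eps\langle x\rangle^{-(1+)}E_{K_0}$ via precisely the five $L^\infty_\psi$ estimates of (\ref{Omega.R}); and the $-u^{(1)}/u\cdot \phi^{(k)}$ term splits as $-\bar u_x/u$ (nonnegative since $\bar u_x<0$ for Blasius) plus a nonlinear $-\eps\rho^{(1)}/u$ piece absorbed into $\eps\langle x\rangle^{-(1+)}E_{K_0}$ using (\ref{unif.0}.4).

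Second, I would bound $\mathcal R_k$ by splitting its terms into linear pieces (involving only Blasius coefficients) and nonlinear pieces (containing at least one $\rho^{(j)}$). Each linear piece carries a factor $\phi^{(j)}$ with $j<k$ and pairs with the multiplier exactly as the term (\ref{pert.1}.1): using $\|\bar{u}^{(j)}/u\,\langle x\rangle^j\|_{L^\infty_\psi}\lesssim 1$ from (\ref{unif.0}) and $\|A^{(j)}\langle x\rangle^{j+1}\|_{L^\infty_\psi}\lesssim 1$ from the analogue of (\ref{est.A})-(\ref{exp.A.1}), one places $\phi^{(j)}$ in the $X_{k-1}$ norm and $\phi^{(k)}$ in $\|\phi^{(k)}\sqrt{\langle\psi\rangle}/\sqrt{u}\,\langle x\rangle^{k-\sigma_k}\|_{L^2_\psi}$, then closes by Cauchy-Schwarz and Young's inequality to extract the $C_\delta\langle x\rangle^{-(1+)}E_{k-1} + \delta\langle x\rangle^{-(1+)}E_k$ portion of (\ref{div.mid}). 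For the nonlinear pieces, since $k \le K_1 \ll K_0$ the tame principle applies: one can always afford to put a $\rho^{(j)}$- or $\rho^{(j)}_\psi$-factor into a weighted $L^\infty_\psi$ norm via (\ref{unif.0}), converting residual boundary powers $u^{-3/2}$ into $\sqrt{\bar u}\,\phi^{(j)}_\psi$ by the Hardy-type argument of (\ref{pert.1}.2.1). These nonlinear contributions close into $\eps\langle x\rangle^{-(1+)}E_{K_0} + \eps\langle x\rangle^{-(0+)}I_{K_0}$, completing (\ref{div.mid}).

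The hard part is purely combinatorial bookkeeping: the Leibniz expansion of $\mathcal R_k$ contains a growing number of commutator terms as $k$ increases, and one must verify that no nonlinear term forces two top-order factors simultaneously into $L^2_\psi$. The gap $k \le K_1 \ll K_0$ supplies ample derivative reserve, so this obstruction never arises. The only genuinely new ingredient compared with the $k=1$ case of Lemma \ref{L.D.E} is the preliminary induction-on-weights (a three-step cascade of multipliers $\phi^{(k)}$, $\phi^{(k)}\langle x\rangle^{\alpha_1}$, $\phi^{(k)}\langle x\rangle^{2(k-\sigma_k)}$) needed to absorb $\p_x\langle x\rangle^{2(k-\sigma_k)}$; this runs exactly parallel to Step 1 of Lemma \ref{L.E.E}.
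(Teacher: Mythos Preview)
Your proposal is correct and follows essentially the same approach as the paper, which in fact omits the proof entirely with the remark that it is ``essentially identical to the proofs of Lemmas \ref{L.E.E} and \ref{L.D.E}.'' Your outline---testing the $\p_x^k$-differentiated equation against the division multiplier, invoking the $\Omega$-positivity and $\bar u_x<0$ structure for the principal terms, and handling the Leibniz commutators via the $L^\infty$ embeddings (\ref{unif.0})--(\ref{unif.2}) with the derivative reserve $k\le K_1\ll K_0$---is precisely the argument the paper intends, with the induction-on-weights step correctly noted as the only bookkeeping addition over the $k=1$ case.
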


The proofs of these lemmas are essentially identical to the proofs of Lemmas \ref{L.E.E} and \ref{L.D.E}, so we omit them. 

\section{Highest Tier: $H^k$ for $K_1 < k \le K_0$}

We take $\p_x^k$ to the equation (\ref{eps.nonlin}) to obtain  
\begin{align} \n
\p_x^k \phi_x - u \p_x^k \phi_{\psi \psi} + A \p_x^k \phi - \sum_{j = 1}^k c_j \p_x^j u \p_x^{k-j} \phi_{\psi \psi} + \sum_{j = 1}^k c_j \p_x^j A \p_x^{k-j} \phi = 0.
\end{align}

\noindent We will simplify notations by setting $\phi^{(k)} := \p_x^k \phi$, in which case the above equation reads 
\begin{align} \label{dx.k}
\phi^{(k)}_x - u  \phi^{(k)}_{\psi \psi} + A  \phi^{(k)} - \sum_{j = 1}^k c_j u^{(j)}  \phi^{(k-j)}_{\psi \psi} + \sum_{j = 1}^k c_j \p_x^j A \phi^{(k-j)} = 0.
\end{align}

\begin{lemma}[Energy Estimate] Let $\phi$ be a solution to (\ref{eps.nonlin}). Then the following estimate is valid: 
\begin{align} \label{energy.k}
\frac{\p_x}{2} \int |\phi^{(k)}|^2 \langle x \rangle^{2(k - \sigma_k)} &+ \int u |\phi^{(k)}_\psi|^2 \langle x \rangle^{2(k-\sigma_k)} \\ \n
\lesssim & \delta \langle x \rangle^{-(1+)} \| \phi^{(k)} \langle x \rangle^{k-\sigma_k} \|_{L^2_\psi}^2 + C_\delta \langle x \rangle^{-(1+)} \sum_{j = 0}^{k-1} E_{j}(x) \\ \n
& + C_\delta \langle x \rangle^{-(0+)} \sum_{j = 0}^{k-1}  I_j(x)
\end{align} 
\end{lemma}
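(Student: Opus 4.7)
The plan is to mirror the multi-step approach of Lemma \ref{L.E.E} at the arbitrary order $k$. I would apply the multiplier $\phi^{(k)} \langle x \rangle^{2(k-\sigma_k)}$ to equation (\ref{dx.k}) and integrate in $\psi$. The three principal contributions from $\phi^{(k)}_x$, $-u \phi^{(k)}_{\psi\psi}$, and $A\phi^{(k)}$ produce the left-hand side of (\ref{energy.k}) after an integration by parts, using the non-positivity of $\bar{u}_{\psi\psi}$ and the non-negativity of $A$, together with the $\eps$-smallness needed to absorb the $\rho$-perturbation in $u = \bar{u} + \eps\rho$ exactly as in the $L^2$ estimate. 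The temporal weight $\langle x \rangle^{2(k-\sigma_k)}$ is bootstrapped through a three-step induction on weights (with successively stronger multipliers $\phi^{(k)}$, $\phi^{(k)} \langle x \rangle^{2(k-\omega_k^1)}$, $\phi^{(k)} \langle x \rangle^{2(k-\sigma_k)}$ for an intermediate $\omega_k^1 < \sigma_k$), where each intermediate stage is integrated from $x = \infty$ to convert a weighted $L^\infty_x L^2_\psi$ bound into an $\langle x \rangle^{-(0+)}$ factor usable at the next stage, exactly as in Step 1 of Lemma \ref{L.E.E}.

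The core of the proof lies in the commutator sums $\sum_{j=1}^{k} c_j\, u^{(j)} \phi^{(k-j)}_{\psi\psi}$ and $\sum_{j=1}^{k} c_j\, \p_x^j A\, \phi^{(k-j)}$, tested against $\phi^{(k)} \langle x \rangle^{2(k-\sigma_k)}$. Each factor $u^{(j)}$ is decomposed as $\bar{u}^{(j)} + \rho^{(j)}$, and analogously $\p_x^j A$ via the Leibniz expansion of (\ref{exp.A.1}); the Blasius pieces are deterministic and pointwise bounded (by $\langle x \rangle^{-j}$ and $\langle x\rangle^{-j-1}$ respectively), while the $\rho$-pieces carry the smallness factor $\eps$. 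The tame principle (which applies because $K_1 \ll K_0$) guarantees that for every $j$ either $j \le K_1$ or $k - j \le K_1$, so the low-order factor in each commutator term may be placed in weighted $L^\infty_\psi$ via the uniform bounds from the middle tier, while the high-order factor is paired with $\phi^{(k)}$ in $L^2_\psi$. Cross terms involving $\phi^{(k)}$ are then treated by Young's inequality with parameter $\delta$: the $\delta$-piece is absorbed into the corresponding term on the RHS of (\ref{energy.k}), and the remainder contributes to $E_{k-1}$ or $I_{k-1}$.

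The principal obstacle is the $j = 1$ diffusion commutator $u^{(1)}\phi^{(k-1)}_{\psi\psi}$, since the $X$-norm controls no $\psi$-derivatives of $\phi^{(k-1)}$ beyond the first. Following the strategy of Lemma \ref{L.E.E}, I would substitute $\phi^{(k-1)}_{\psi\psi}$ using equation (\ref{dx.k}) at level $k-1$,
\begin{align*}
\phi^{(k-1)}_{\psi\psi} = \frac{1}{u}\Big[\phi^{(k)} + A \phi^{(k-1)} - \sum_{i=1}^{k-1} c_i\, u^{(i)} \phi^{(k-1-i)}_{\psi\psi} + \sum_{i=1}^{k-1} c_i\, \p_x^i A\, \phi^{(k-1-i)}\Big],
\end{align*}
so that the highest-order piece of this commutator becomes a multiple of $\int \frac{u^{(1)}}{u} (\phi^{(k)})^2 \langle x \rangle^{2(k-\sigma_k)}$. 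Exactly as in the splitting (\ref{split.negative}), the Blasius part $\bar{u}^{(1)}/u$ carries a favorable sign because $\bar{u}_x < 0$, and the $\rho^{(1)}/u$ part is absorbed via $\eps$-smallness into the $\delta$-term on the RHS of (\ref{energy.k}). For $j \ge 2$ the same substitution yields strictly lower-order factors $\phi^{(k-j+1)}$ that feed into $E_{k-1}$; when instead $k-j$ is small, $\phi^{(k-j)}_{\psi\psi}$ is placed directly in a weighted $L^\infty_\psi$ via uniform bounds of the type (\ref{unif.2}), which themselves rely crucially on the division-estimate weight $1/u$. The essential point is that every factor of $1/u$ introduced by a substitution or by an $L^\infty$ bound must be paired against a $\sqrt{u}$-weight carried by the $X$-norm on the remaining factors, which is precisely why the division estimates of the lower tiers are indispensable at this top tier.
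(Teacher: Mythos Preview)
Your overall architecture is right: weighted multiplier, induction on weights, tame splitting of the commutators according to $\min\{j,k-j\}$ versus $\max\{j,k-j\}$, and placement of $\phi^{(k-j)}_{\psi\psi}$ in $L^\infty_\psi$ via (\ref{unif.2}) when $k-j$ is small. That last part matches the paper exactly.

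Where you diverge from the paper is in the treatment of the commutator $u^{(j)}\phi^{(k-j)}_{\psi\psi}$ when $j$ is the small index. You propose to substitute $\phi^{(k-j)}_{\psi\psi}$ via the equation at level $k-j$, extracting (for $j=1$) the signed term $-\int \frac{\bar{u}^{(1)}}{u}|\phi^{(k)}|^2$ exactly as in the middle-tier Lemma~\ref{L.E.E}. The paper does \emph{not} do this at the highest tier: instead it integrates by parts once in $\psi$, turning $\int u^{(j)}\phi^{(k-j)}_{\psi\psi}\phi^{(k)}$ into $\int u^{(j)}_\psi \phi^{(k-j)}_\psi \phi^{(k)} + \int u^{(j)}\phi^{(k-j)}_\psi \phi^{(k)}_\psi$, and then estimates these directly after a near/far splitting in $\xi$ and a Hardy-type inequality in the localized zone. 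The sign of $\bar{u}_x$ plays no role in the paper's argument here. The paper's route is cleaner because a single integration by parts reduces $\phi^{(k-j)}_{\psi\psi}$ to $\phi^{(k-j)}_\psi$, which sits directly in the $X$-norm; your route works but generates nested commutators $\frac{u^{(1)}}{u}\sum_i c_i u^{(i)}\phi^{(k-1-i)}_{\psi\psi}$ that must themselves be estimated (they are controllable via (\ref{L2.emb}.10)--(\ref{L2.emb}.11) since $k-1-i \le K_0-2$, but this is extra bookkeeping you have not spelled out).

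One minor correction: the induction on weights is not three steps but $k+1$ steps---the paper applies the multipliers $\phi^{(k)}\langle x\rangle^{2(l-\omega_l)}$ for $l=0,\dots,k$, each stage gaining one power of $\langle x\rangle$. The three-step version you describe is specific to $k=1$.
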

\begin{proof}  We apply the weighted multiplier $\phi^{(k)} \langle x \rangle^{2(l-\omega_l)}$ for $l = 0, ..., k$ to equation (\ref{dx.k}). Again, we write only the $l = k$ case, with the $l < k$ cases being carried out by the induction on weights argument as in Lemma \ref{L.E.E}. 

The first three terms from (\ref{dx.k}) are estimated nearly identically to the lower order energy estimates. The only difference is the following perturbative term for which we integrate by parts:
\begin{align} \n
&\int \eps \rho \phi^{(k)}_{\psi \psi} \phi^{(k)} \langle x \rangle^{2(k-\sigma_k)} \\ \label{energy.low}
= & - \int \eps \rho_\psi \phi^{(k)}_\psi \phi^{(k)} \langle x \rangle^{2(k-\sigma_k)} - \int \eps \rho |\phi^{(k)}_\psi|^2 \langle x \rangle^{2(k-\sigma_k)}. 
\end{align}

\noindent The first term above is majorized by 
\begin{align*}
|(\ref{energy.low}.1)| \lesssim &\eps \| u \rho_\psi \langle x \rangle^{\frac{3}{4}-} \|_{L^\infty_\psi} \| \phi^{(k)}_\psi \langle x \rangle^{k - \sigma_k} \|_{L^2_\psi} \Big\| \frac{\phi^{(k)}}{u} \langle x \rangle^{k-\sigma_k} \Big\|_{L^2_\psi} \\
\lesssim &\eps \langle x \rangle^{-(\frac{3}{4}-)} \| \phi_\psi \langle x \rangle^{\frac{3}{4}-} \|_{L^\infty_\psi} \| \phi^{(k)}_\psi \langle x \rangle^{k-\sigma_k} \|_{L^2_\psi} \| \sqrt{u} \phi^{(k)}_\psi \langle x \rangle^{k-\sigma_k} \|_{L^2_\psi} \langle x \rangle^{\frac{1}{2}-} \\
\lesssim & \eps \langle x \rangle^{-(\frac{1}{4}-)} E_{K_0}(x) I_{K_0}(x). 
\end{align*}

\noindent We have appealed to estimate (\ref{unif.1}.1), and the definition of the $X$ norm in (\ref{norm.X}).

The second term above is easily majorized by 
\begin{align*}
|(\ref{energy.low}.2)| \lesssim&\eps \langle x \rangle^{-(\frac{1}{4}-)} \Big\| \frac{\rho}{u} \langle x \rangle^{\frac{1}{4}-} \Big\|_{L^\infty_\psi} \| \sqrt{u} \phi^{(k)}_\psi \langle x \rangle^{k-\sigma_k} \|_{L^2_\psi}^2 \\
\lesssim & \eps \langle x \rangle^{-(\frac{1}{4}-)} E_{K_0} I_{K_0}^2. 
\end{align*}

\noindent Above, we have appealed to estimate (\ref{unif.0}.4). 

For the next terms from (\ref{dx.k}), we begin by considering the case when $j = \text{min} \{ j, k-j \}$. Consider the following term, which we integrate by parts in $\psi$:
\begin{align} \n
&- \int u^{(j)} \phi^{(k-j)}_{\psi \psi} \phi^{(k)} \langle x \rangle^{2(k-\sigma_k)} \\ \n
= &\int u^{(j)}_\psi \phi^{(k-j)}_\psi \phi^{(k)} \langle x \rangle^{2(k-\sigma_k)} + \int u^{(j)} \phi^{(k-j)}_\psi \phi^{(k)}_\psi \langle x \rangle^{2(k-\sigma_k)} \\ \n
= & \int u^{(j)}_\psi \phi^{(k-j)}_\psi \phi^{(k)} \langle x \rangle^{2(k-\sigma_k)} \chi(\xi) + \int u^{(j)}_\psi \phi^{(k-j)}_\psi \phi^{(k)} \langle x \rangle^{2(k-\sigma_k)} \chi(\xi)^C \\ \label{commute}
& +  \int u^{(j)} \phi^{(k-j)}_\psi \phi^{(k)}_\psi \langle x \rangle^{2(k-\sigma_k)} \chi(\xi) +  \int u^{(j)} \phi^{(k-j)}_\psi \phi^{(k)}_\psi \langle x \rangle^{2(k-\sigma_k)} \chi(\xi)^C. 
\end{align}

\noindent Above, $\chi(\xi)$ is a normalized cut-off function, equal to $1$ on $\xi \le 1$ and equal to $0$ on $\xi \ge 2$. We use the notation $\chi(\xi)^C := 1 - \chi(\xi)$. 

We now estimate the terms appearing above. First, 
\begin{align*}
|(\ref{commute}.1)| = & |\int \sqrt{\psi} u^{(j)}_\psi \psi^{\frac{1}{4}} \phi^{(k-j)}_\psi \frac{\phi^{(k)}}{\psi^{\frac{3}{4}}} \langle x \rangle^{2(k-\sigma_k)} \chi(\xi)| \\
\lesssim & \langle x \rangle^{\frac{3}{8}} |\int \sqrt{\xi} u^{(j)}_\psi \xi^{\frac{1}{4}} \phi^{(k-j)}_\psi \frac{\phi^{(k)}}{\psi^{\frac{3}{4}}} \langle x \rangle^{2(k- \sigma_k)}  \chi(\xi) | \\
\lesssim & \langle x \rangle^{\frac{3}{8}+ 2(k-\sigma_k)} \| u u_\psi^{(j)} \|_{L^\infty_\psi} \| \sqrt{u} \phi^{(k-j)}_\psi \|_{L^2_\psi} \Big\| \frac{\phi^{(k)}}{\psi^{\frac{3}{4}}} \Big\|_{L^2_\psi(\xi \lesssim 1)} \\
\lesssim & \langle x \rangle^{\frac{3}{8}+2(k-\sigma_k)} \langle x \rangle^{-j-\frac{1}{2}} \langle x \rangle^{-((k-j)-\sigma_{k-j})} I_{k-j}(x) E_j(x) \times \\
& \Big( \| \psi^{\frac{1}{4}} \phi^{(k)}_\psi \|_{L^2_\psi(\xi \lesssim 1)} + \| \langle x \rangle^{-\frac{3}{8}} \phi^{(k)} \|_{L^2_\psi(\xi \sim 1)} \Big) \\
\lesssim & \langle x \rangle^{\frac{3}{8}+2(k - \sigma_k)} \langle x \rangle^{-j-\frac{1}{2}} \langle x \rangle^{-(k-j) + \sigma_{k-j}} I_{k-j}(x)E_j(x) \langle x \rangle^{\frac{1}{8}} \times\\
&  \Big( \| \sqrt{u} \phi^{(k)}_\psi \|_{L^2_\psi(\xi \lesssim 1)} + \| \langle x \rangle^{-\frac{1}{2}} \phi^{(k)} \|_{L^2_\psi(\xi \sim 1)} \Big) \\
\lesssim & \delta \Big( \langle x \rangle^{-(1+)} E_k(x) + \langle x \rangle^{-(0+)} I_k(x) \Big) \\
& + C_\delta  \Big( \langle x \rangle^{-(1+)} E_{\langle k- 1 \rangle}(x) + \langle x \rangle^{-(0+)} I_{\langle k - 1 \rangle}(x) \Big) 
\end{align*}

\noindent Above, we have used $u u^{(j)}_\psi \sim u^{(j)}_y$ according to the chain rule, and subsequently (\ref{unif.1}.2).  Note that $j \le K_0 - 2$ as required by (\ref{unif.1}.2) for $k$ sufficiently large because $j = \min \{j, k - j\}$. We have also used the following Hardy type inequality:
\begin{align*}
\Big\| \frac{\phi^{(k)}}{\psi^{\frac{3}{4}}} \Big\|_{L^2_\psi(\xi \lesssim 1)} \lesssim \| \psi^{\frac{1}{4}} \phi^{(k)}_\psi \|_{L^2_\psi(\xi \lesssim 1)} + \| \phi^{(k)} \langle x \rangle^{-\frac{3}{8}} \|_{L^2(\xi \sim 1)}. 
\end{align*} 

\noindent Note that we have also used that $E_j(x) I_{k-j}(x) = I_{k-j}(x)$ by definition, and also that $k-j \le k-1$ because $j \ge 1$ and $j = \min \{ j, k-j\} \le k-1$. 

Second, in the region where $\chi(\xi)^C$ is supported, $\xi \gtrsim 1$ and so $u \gtrsim 1$. We are thus free to add in factor of $u$ which we do via:
\begin{align*}
|(\ref{commute}.2)| \lesssim & \langle x \rangle^{2(k-\sigma_k)} \langle x \rangle^{-j-\frac{1}{2}} \| u u^{(j)}_\psi \langle x \rangle^{j+\frac{1}{2}} \|_{L^\infty_\psi} \langle x \rangle^{-(k-j) + \sigma_{k-j}} \\
& \| \sqrt{u} \phi^{(k-j)}_\psi \langle x \rangle^{k-j - \sigma_{k-j}} \|_{L^2_\psi} \| \phi^{(k)} \langle x \rangle^{k - \sigma_k} \|_{L^2_\psi} \langle x \rangle^{-k+\sigma_k} \\
\lesssim & \langle x \rangle^{-(\frac{1}{2}+)} \| \sqrt{u} \phi_\psi^{(k-j)} \langle x \rangle^{k-j-\sigma_{k-j}} \|_{L^2_\psi} \| \phi^{(k)} \langle x \rangle^{k-\sigma_k} \|_{L^2_\psi} E_j(x) \\
\lesssim &  \delta \Big( \langle x \rangle^{-(1+)} E_k(x) + \langle x \rangle^{-(0+)} I_k(x) \Big) \\
& + C_\delta  \Big( \langle x \rangle^{-(1+)} E_{\langle k- 1 \rangle}(x) + \langle x \rangle^{-(0+)} I_{\langle k - 1 \rangle}(x) \Big).
\end{align*}

\noindent We have invoked the same estimates as in (\ref{commute}.2), again admissible as $j \le K_0 - 2$. 

Third, 
\begin{align*}
|(\ref{commute}.3)| = & |\int \frac{u^{(j)}}{\sqrt{\psi}} \psi^{\frac{1}{4}}\phi^{(k-j)}_\psi \psi^{\frac{1}{4}} \phi^{(k)}_\psi \langle x \rangle^{2(k-\sigma_k)} \chi(\xi)| \\
= & |\int \frac{u^{(j)}}{\sqrt{\psi}} \sqrt{\eta} \phi^{(k-j)}_\psi \sqrt{\eta} \phi^{(k)}_\psi \langle x \rangle^{2(k-\sigma_k)} \langle x \rangle^{\frac{1}{4}} \chi(\xi)| \\
\lesssim & \langle x \rangle^{-(0+)} \| \frac{u^{(j)}}{\sqrt{\psi}} \langle x \rangle^{j+ \frac{1}{4}} \|_{L^\infty_\psi} \| \sqrt{\bar{u}} \phi^{(k-j)}_\psi \langle x \rangle^{(k-j)-\sigma_{k-j}} \|_{L^2_\psi} \\
& \times \| \sqrt{\bar{u}} \phi^{(k)}_\psi \langle x \rangle^{k-\sigma_k} \|_{L^2_\psi} \\
\lesssim & \langle x \rangle^{-(0+)} \| \frac{u^{(j)}}{\eta} \langle x \rangle^{j} \|_{L^\infty_\psi}  \| \sqrt{\bar{u}} \phi^{(k-j)}_\psi \langle x \rangle^{(k-j)-\sigma_{k-j}} \|_{L^2_\psi} \\
& \times \| \sqrt{\bar{u}} \phi^{(k)}_\psi \langle x \rangle^{k-\sigma_k} \|_{L^2_\psi} \\
\lesssim &  \delta \Big( \langle x \rangle^{-(1+)} E_k(x) + \langle x \rangle^{-(0+)} I_k(x) \Big) \\
& + C_\delta  \Big( \langle x \rangle^{-(1+)} E_{\langle k- 1 \rangle}(x) + \langle x \rangle^{-(0+)} I_{\langle k - 1 \rangle}(x) \Big).
\end{align*}

\noindent Above, we have invoked estimate (\ref{unif.0}.5).

Fourth, 
\begin{align*}
|(\ref{commute}.4)| \lesssim & \langle x \rangle^{-(0+)} \| u^{(j)} \langle x \rangle^j \|_{L^\infty_\psi} \| \sqrt{\bar{u}} \phi^{(k-j)}_\psi \langle x \rangle^{(k-j) - \sigma_{k-j}} \|_{L^2_\psi} \\
\times & \| \sqrt{\bar{u}} \phi^{(k)}_\psi \langle x \rangle^{k - \sigma_k} \|_{L^2_\psi} \\
\lesssim &  \delta \Big( \langle x \rangle^{-(1+)} E_k(x) + \langle x \rangle^{-(0+)} I_k(x) \Big) \\
& + C_\delta  \Big( \langle x \rangle^{-(1+)} E_{\langle k- 1 \rangle}(x) + \langle x \rangle^{-(0+)} I_{\langle k - 1 \rangle}(x) \Big).
\end{align*}

\noindent This concludes the treatment of this term for $1 \le j = \min \{j, k-j \}$. We now treat this term for $j = \max \{j, k - j \}$. Instead of integrating by parts, we may treat the following: 
\begin{align} \label{large}
- \int u^{(j)} \phi^{(k-j)}_{\psi \psi} \phi^{(k)} \langle x \rangle^{2(k-\sigma_k)} \Big( \chi(\xi) + \chi(\xi)^C \Big). 
\end{align}

The far-field term is estimated: 
\begin{align*}
|(\ref{large}.2)| \lesssim & \langle x \rangle^{-(1+)} \| \bar{u} u^{(j)} \langle x \rangle^{j-\frac{1}{4}} \|_{L^2_\psi} \\
& \times \| \bar{u} \phi^{(k-j)}_{\psi \psi} \langle x \rangle^{(k-j)+1 - \sigma_{k-j+1}+\frac{1}{4}} \|_{L^\infty_\psi} \| \phi^{(k)} \langle x \rangle^{k - \sigma_k} \|_{L^2_\psi} \\
\lesssim & \langle x \rangle^{-(1+)} C_\delta E_{k-1}(x) + \delta \langle x \rangle^{-(1+)} \| \phi^{(k)} \langle x \rangle^{k-\sigma_k} \|_{L^2_\psi}^2 + \eps \langle x \rangle^{-(1+)} E_{K_0}(x). 
\end{align*}

\noindent Above, we use (\ref{L2.emb}.6), which is admissible since $l$ can be the top order, $K_0$ in (\ref{L2.emb}) (to deal with $j = k$ here). We also use (\ref{unif.2}.1), admissible because $k-j < K_0 - 2$. 

We estimate the localized contribution via 
\begin{align*}
|(\ref{large}.1)| =& |\int \sqrt{\psi} u^{(j)} \psi^{\frac{1}{4}} \phi^{(k-j)}_{\psi \psi} \frac{\phi^{(k)}}{\psi^{\frac{3}{4}}} \langle x \rangle^{2(k-\sigma_k)} \chi(\xi)| \\
\lesssim & \langle x \rangle^{\frac{1}{2}} \langle x \rangle^{-(\frac{5}{4}-)} \| \bar{u} u^{(j)} \langle x \rangle^{j - \frac{1}{4}} \|_{L^2_\psi} \| \sqrt{\bar{u}} \phi^{(k-j)}_{\psi \psi}  \langle x \rangle^{(k-j) + (\frac{3}{2}-)}\|_{L^\infty_\psi(\xi \lesssim 1)} \times \\
&  \| \sqrt{\bar{u}} \phi^{(k)}_\psi \langle x \rangle^{k-\sigma_k} \|_{L^2_\psi} \\
\lesssim & C_\delta \langle x \rangle^{-(1+)} E_{k-1}(x) + \delta \| \sqrt{\bar{u}} \phi^{(k)}_\psi \langle x \rangle^{k-\sigma_k} \|_{L^2_\psi}^2 \\
\lesssim & \langle x \rangle^{-(1+)} C_\delta E_{k-1}(x) + \delta \langle x \rangle^{-(1+)} \| \phi^{(k)} \langle x \rangle^{k-\sigma_k} \|_{L^2_\psi}^2 + \eps \langle x \rangle^{-(1+)} E_{K_0}(x). 
\end{align*}

\noindent Above, we have used that $u \sim \eta \sim \sqrt{\xi}$ in the region where $\xi \lesssim 1$. We have used (\ref{L2.emb}.6), again admissible since $l$ can be equal to the top order in (\ref{L2.emb}.6) (to match $j = k$ case here). We have also used the enhanced $L^\infty$ decay, (\ref{unif.2}.2), in turn admissible because $k -j < K_0 - 3$. 

We now move to the final term:  
\begin{align*}
&|\int \p_x^j A \phi^{(k-j)} \phi^{(k)} \langle x \rangle^{2(k-\sigma_k)}| \\
\le &|\int \p_x^j A \phi^{(k-j)} \phi^{(k)} \langle x \rangle^{2(k-\sigma_k)} \chi(\xi)| + |\int \p_x^j A \phi^{(k-j)} \phi^{(k)} \langle x \rangle^{2(k-\sigma_k)} \chi(\xi)^C|.
\end{align*}

\noindent We must again split into the cases when $j = \min\{j, k- j\}$ and when $j = \max\{j, k-j\}$. This is largely analogous to the previous term, and so we treat the most difficult case which is when $j = \max\{j, k-j\}$ and $\xi \lesssim 1$. First, we identify the most singular term in $\p_x^j A$ as 
\begin{align*}
&|\int \frac{\bar{u} \p_x^j u}{ \bar{u}^2(u + \bar{u})^2} \bar{u}_{yy} \phi^{(k-j)} \phi^{(k)} \langle x \rangle^{2(k-\sigma_k)} \chi(\xi)| \\
\lesssim & \langle x \rangle^{\frac{1}{2}} \| \bar{u} \p_x^j u \langle x \rangle^{j-\frac{1}{4}} \|_{L^2_\psi} \| \frac{\bar{u}_{yy}}{\bar{u}^2} \langle x \rangle \|_{L^\infty_\psi} \| \sqrt{\bar{u}} \phi^{(k-j)}_{\psi} \langle x \rangle^{(k-j) + (\frac{3}{4}-)} \|_{L^\infty_\psi} \\
& \times \| \phi^{(k)} \langle x \rangle^{k-\sigma_k} \|_{L^2_\psi} \\
\lesssim & \langle x \rangle^{-(1+)}  \| \bar{u} \p_x^j u \langle x \rangle^{j-\frac{1}{4}} \|_{L^2_\psi}\| \phi^{(k)} \langle x \rangle^{k-\sigma_k} \|_{L^2_\psi} \\
\lesssim & \delta \Big( \langle x \rangle^{-(1+)} E_k(x) + \langle x \rangle^{-(0+)} I_k(x) \Big) \\
& + C_\delta  \Big( \langle x \rangle^{-(1+)} E_{\langle k- 1 \rangle}(x) + \langle x \rangle^{-(0+)} I_{\langle k - 1 \rangle}(x) \Big). 
\end{align*}

This concludes the proof. 

\end{proof}

\begin{lemma}[Division Estimate] Let $\phi$ be a solution to (\ref{eps.nonlin}). Then the following estimate is valid: 
\begin{align} \n
\frac{\p_x}{2} \int& |\phi^{(k)}|^2 \frac{1}{u} \langle \psi \rangle \langle x \rangle^{2(k - \sigma_k)} + \int |\phi^{(k)}_\psi|^2 \langle \psi \rangle \langle x \rangle^{2(k-\sigma_k)} \\ \n
&\lesssim \delta \langle x \rangle^{-(1+)} \| \phi^{(k)} \frac{\sqrt{\langle \psi \rangle}}{\sqrt{u}} \langle x \rangle^{k-\sigma_k} \|_{L^2_\psi}^2 + C_\delta \langle x \rangle^{-(1+)} \sum_{j = 0}^{k-1} E_{j}(x) \\ \label{div.high}
& + C_\delta \langle x \rangle^{-(0+)} \sum_{j = 0}^{k-1}  I_j(x) + \eps^3 \Big[ \langle x \rangle^{-(0+)} I_{K_0}(x) + \langle x \rangle^{-(1+)} E_{K_0}(x) \Big].
\end{align}
\end{lemma}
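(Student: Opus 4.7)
The plan is to adapt the division estimate of the middle tier (Lemma \ref{L.D.E}) to the top tier, in exact parallel to how the top-tier energy estimate adapts Lemma \ref{L.E.E}. I first run the induction on weights to reduce to the top-order weighted multiplier $\phi^{(k)}\frac{\langle\psi\rangle}{u}\langle x\rangle^{2(k-\sigma_k)}$, which I pair against equation (\ref{dx.k}). The ``main'' three terms $\phi^{(k)}_x - u\phi^{(k)}_{\psi\psi} + A\phi^{(k)}$ reproduce, after integration by parts in $\psi$ and combining the $\frac{1}{2}u_x/u^2$ and $A/u$ pieces, the identity (\ref{div.id.1}) at order $k$: the LHS quantities in the conclusion, plus the $\Omega$ term which is nonnegative by Lemma \ref{Omega.lemma}, plus an $\Omega^R$ remainder that is controlled verbatim by (\ref{Omega.R}).

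The quasilinear piece $-\eps\rho\,\phi^{(k)}_{\psi\psi}$ obtained from $u = \bar u + \eps\rho$ in the diffusion is handled by integration by parts as in (\ref{energy.low}), now against the $\frac{\langle\psi\rangle}{u}$ weight; the derivative on the weight produces a boundary-type factor $\frac{u_\psi\langle\psi\rangle}{u^2}$, but this is precisely balanced by the $\frac{1}{u}$ already sitting in the division norm, and the resulting cubic errors drop into the $\eps$-term on the right after using (\ref{unif.1}.1) and (\ref{unif.0}.4). The real work is then the two commutators $\sum_{j=1}^k c_j u^{(j)}\phi^{(k-j)}_{\psi\psi}$ and $\sum_{j=1}^k c_j\p_x^j A\,\phi^{(k-j)}$. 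I split each according to $j=\min\{j,k-j\}$ versus $j=\max\{j,k-j\}$. When $j=\min$, I integrate by parts the $\psi\psi$ onto $\phi^{(k)}\frac{\langle\psi\rangle}{u}$ and then split into $\chi(\xi)$ and $\chi(\xi)^C$ exactly as in (\ref{commute}); the singular factor $u_\psi/u^2\sim\eta^{-2}$ near the wall is absorbed through the Hardy-type inequality already used in Lemma \ref{L.E.E}, paired with the bounds (\ref{unif.0}.5) and (\ref{unif.1}.2). When $j=\max$, no derivative can be spent, and I instead put $\phi^{(k-j)}_{\psi\psi}$ in a weighted $L^\infty_\psi$ using the enhanced uniform estimate (\ref{unif.2}.2), admissible because $k-j<K_0-3$, paired against $\|\bar u u^{(j)}\langle x\rangle^{j-\frac14}\|_{L^2_\psi}$ from (\ref{L2.emb}.6) and the division-weighted $\phi^{(k)}$ by Cauchy--Schwarz. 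The $A$-commutator is identical after expanding $\p_x^j A$ as in (\ref{exp.A.1}) to isolate the dominant $\bar u_{yy}/\bar u^2$ factor.

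The main obstacle is the $j=\max$ commutator under the boundary-singular weight $\frac{\langle\psi\rangle}{u}$: splitting this weight onto $\phi^{(k)}$ as $\frac{\sqrt{\langle\psi\rangle}}{\sqrt u}\cdot\frac{\sqrt{\langle\psi\rangle}}{\sqrt u}$ forces exactly one $\frac{1}{\sqrt u}$ onto the $L^\infty$ factor $\phi^{(k-j)}_{\psi\psi}$, and this is only integrable because the division estimate's enhanced uniform bound (\ref{unif.2}.2) supplies the missing $\sqrt u$ weight. This is precisely the derivative-saving mechanism advertised in Step 3 of the main ideas, and without it the top-tier division estimate cannot close. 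The $\eps^3$ prefactor in the stated error appears only through those genuinely trilinear contributions in which one factor of $\eps\rho$ comes from the diffusion splitting, one from $u^{(j)}=\bar u^{(j)}+\eps\rho^{(j)}$, and one from $\Omega^R$ in the baseline identity; they collect into $\eps^3\bigl[\langle x\rangle^{-(0+)}I_{K_0}+\langle x\rangle^{-(1+)}E_{K_0}\bigr]$, while all linear-in-$\eps$ contributions are subsumed by the $\delta$ and $C_\delta$ terms.
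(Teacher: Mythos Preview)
Your overall framework is right --- induction on weights, the $\Omega$-identity for the main three terms, and the $j=\max$ commutator handled by the enhanced uniform bound (\ref{unif.2}.2) paired with (\ref{L2.emb}.7) --- and that last point is indeed the derivative-saving mechanism the paper advertises. But your treatment of the $j=\min$ commutator departs from the paper in a way that matters.

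The paper does \emph{not} integrate by parts for $j=\min$ in the division estimate. Instead it splits off $j=1$ and substitutes the equation at level $k-1$, writing $u\phi^{(k-1)}_{\psi\psi}=\phi^{(k)}-L^{(k-1)}$; the $\phi^{(k)}$ contribution then produces the \emph{signed} term $-\int\frac{u^{(1)}}{u}|\phi^{(k)}|^2\frac{\langle\psi\rangle}{u}\langle x\rangle^{2(k-\sigma_k)}>0$ by virtue of $\bar u_x<0$ (exactly the mechanism already exploited in the middle tier). For $2\le j=\min$ the paper estimates \emph{directly}, using $\bigl\|\frac{u^{(j)}}{u}\bigr\|_{L^\infty_\psi}$ together with (\ref{L2.emb}.11) on $\sqrt{u}\,\phi^{(k-j)}_{\psi\psi}\sqrt{\langle\psi\rangle}$ and the division norm on $\phi^{(k)}$; the restriction $j\ge 2$ is precisely what pushes the $\phi^{(k-j)}_{\psi\psi}$ control down to level $k-j+1\le k-1$.

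Your proposal integrates by parts for all $j=\min$, mirroring the \emph{energy} estimate (\ref{commute}). Against the division multiplier $\phi^{(k)}\frac{\langle\psi\rangle}{u}$, this generates an extra term from $\partial_\psi\bigl(\frac{1}{u}\bigr)$ whose near-wall singularity is $u_\psi/u^2\sim\eta^{-3}$ (not $\eta^{-2}$), one order worse than anything the Hardy inequality in the energy argument had to absorb; and at $j=1$ none of the resulting pieces carries $\eps$-smallness, so they cannot simply be tossed into the nonlinear error. It is not clear your sketch closes at $j=1$ without invoking the equation substitution and the sign of $\bar u_x$, which is exactly the step you omit. Separately, your paragraph on ``the quasilinear piece $-\eps\rho\,\phi^{(k)}_{\psi\psi}$'' is unnecessary: in the division identity the full $u$ in the diffusion cancels against the $\frac{1}{u}$ in the multiplier, so no splitting $u=\bar u+\eps\rho$ is needed there.
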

\begin{proof} We apply the inductively weighted ``division-multiplier" $\phi^{(k)} \frac{1}{u} \langle \psi \rangle \langle x \rangle^{2(l - \omega^k_l)}$. The proof follows in essentially the same way as the baseline Division Estimate, and we thus treat the new commutator terms. First, consider 
\begin{align} \label{tcoffee}
-\int u^{(j)} \phi^{(k-j)}_{\psi \psi} \phi^{(k)} \frac{\langle \psi \rangle}{u} \langle x \rangle^{2(k-\sigma_k)} \Big( \chi(\xi) + \chi(\xi)^C \Big). 
\end{align}

We estimate the $\chi(\xi)$ term, the other being straightforward. First, we will treat the $j = 1$ case. In this case, one takes $\p_x^{k-1}$ of equation (\ref{nonlin.eq}) to generate the equality 
\begin{align*}
u \phi^{(k-1)}_{\psi \psi} = \phi^{(k)} - \underbrace{ \sum_{j = 1}^{k-1} c_j \p_x^j u \p_x^{k-j} \phi_{\psi \psi} + \sum_{j = 0}^{k-1} c_j \p_x^j A \p_x^{k-j} \phi }_{L^{(k-1)}}. 
\end{align*}

\noindent Of these terms, the highest order is $\phi^{(k)}$, which, when inserted into (\ref{tcoffee}) produces the positive term (we do not need to consider $\chi(\xi)$ here)
\begin{align*}
-\int \frac{u^{(1)}}{u} |\phi^{(k)}|^2  \frac{\langle \psi \rangle}{u} \langle x \rangle^{2(k-\sigma_k)} > 0. 
\end{align*}

\noindent The lower order terms, $L^{(k-1)}$ are inserted into (\ref{tcoffee}) to produce 
\begin{align*}
&|\int \frac{u^{(1)}}{u} L^{(k-1)} \phi^{(k)} \frac{\langle \psi \rangle}{u} \langle x \rangle^{2(k-\sigma_k)} \chi(\xi)| \\
\lesssim& \langle x \rangle^{-(1+)} \| \frac{u^{(1)}}{u} \langle x \rangle \|_{L^\infty_\psi} \Big\| L^{(k-1)} \frac{\sqrt{\langle \psi \rangle}}{\sqrt{u}} \langle x \rangle^{k-\sigma_k} \Big\|_{L^2_\psi} \| \phi^{(k)} \frac{\sqrt{\langle \psi \rangle}}{\sqrt{u}} \langle x \rangle^{k-} \|_{L^2_\psi} \\
\lesssim & \langle x \rangle^{-(1+)} \Big( C_\delta E_{k-1}(x) + \delta E_k(x) \Big). 
\end{align*}

Next, assume $2 \le j = \min \{j, k-j \}$. Then, 
\begin{align*}
|(\ref{tcoffee}.1)| \lesssim  & \langle x \rangle^{-(1+)}\Big\| \frac{u^{(j)}}{u} \langle x \rangle^j \Big\|_{L^\infty_\psi} \| \phi^{(k-j)}_{\psi \psi} \sqrt{u} \sqrt{\langle \psi \rangle} \langle x \rangle^{(k-j) + 1 - \sigma_{k-j+1}} \|_{L^2_\psi} \times \\
& \| \frac{\phi^{(k)}}{\sqrt{u}} \sqrt{ \langle \psi \rangle} \langle x \rangle^{k - \sigma_k} \|_{L^2_\psi} \\
\lesssim & \langle x \rangle^{-(1+)} E_j(x) E_{k-1}(x) E_k(x) \\
\lesssim & C_\delta \langle x \rangle^{-(1+)} E_{k-1}(x)^2 + \delta \langle x \rangle^{-(1+)} E_k(x)^2. 
\end{align*}

\noindent Above, we have used that $j \ge 2$ so that $k-j \le K_0 - 2$. We have subsequently applied (\ref{L2.emb}.11). 

Second, assume $j = \max \{j, k- j \}$. In this case, we estimate the localized contribution via 
\begin{align*}
|(\ref{tcoffee}.1)| \lesssim & \langle x \rangle^{-(1+)} \Big\| \sqrt{u} \rho^{(j)} \langle x \rangle^{j-} \sqrt{\langle \psi \rangle} \Big\|_{L^2_\psi} \| \sqrt{\bar{u}} \phi^{(k-j)}_{\psi \psi}  \langle x \rangle^{(k-j) + (\frac{5}{4}-)}\|_{L^\infty_\psi}\\
& \times \| \phi^{(k)}_\psi \sqrt{ \langle \psi \rangle} \langle x \rangle^{k - \sigma_k} \|_{L^2_\psi}  + \langle x \rangle^{-(1+)} \| \sqrt{u} \bar{u}^{(j)} \langle x \rangle^{j-} \|_{L^\infty_\psi} \\
& \times  \| \sqrt{\bar{u}} \phi^{(k-j)}_{\psi \psi} \langle x \rangle^{k-j + 1} \|_{L^2_\psi} \| \| \phi^{(k)} \frac{\sqrt{\langle \psi \rangle}}{\sqrt{u}} \langle x \rangle^{k-\sigma_k} \|_{L^2_\psi} \\
\lesssim & \langle x \rangle^{-(1+)} \Big( C_\delta E_{k-1}(x) + \delta E_k(x) \Big).
\end{align*}

\noindent Above, we have used (\ref{L2.emb}.7) which crucially allows us to not lose any derivatives ($l$ can be taken equal to $K_0$ in (\ref{L2.emb}.7)), and the decay $L^\infty$ estimates, (\ref{unif.2}.1) and (\ref{unif.2}.2). 

We next treat 
\begin{align} \label{coffee}
\int \p_x^j A \phi^{(k-j)} \phi^{(k)} \frac{\langle \psi \rangle}{u} \langle x \rangle^{2(k-\sigma_k)}.
\end{align}

\noindent  For $1 \le j = \min\{j, k-j\}$, we estimate 
\begin{align*}
|(\ref{coffee}.1)| \lesssim & \langle x \rangle^{-(1+)} \| \p_x^j A \langle x \rangle^{j+1} \|_{L^\infty_\psi} \| \frac{\phi^{(k-j)}}{\sqrt{u}} \langle \sqrt{\psi} \rangle \langle x \rangle^{(k-j) - \sigma_{k-j}} \|_{L^2_\psi} \\
&  \| \frac{\phi^{(k)}}{\sqrt{u}} \langle \sqrt{\psi} \rangle \langle x \rangle^{k - \sigma_k} \|_{L^2_\psi} \\
\lesssim & \langle x \rangle^{-(1+)} (1 + \eps E_{k-1}(x)) E_{k-1}(x) E_k(x). 
\end{align*}

Next, we consider the case when $j = \max\{j, k-j\}$. For this case, we must expand $\p_x^j A$ as in (\ref{exp.A.1}). For simplicity, we treat the term containing the highest order derivative on the unknown, $\rho$, which reads 
\begin{align} \label{djA}
\p_x^j A = \frac{\bar{u}_{yy}}{\bar{u}(\bar{u} + u)} \frac{\p_x^j \rho}{\bar{u} + u} + \text{l.o.t}.
\end{align}

\noindent Inserting this into (\ref{coffee}), we obtain 
\begin{align*}
|\int \frac{\bar{u}_{yy}}{\bar{u}(\bar{u} + u)} &\frac{\p_x^j \rho}{\bar{u} + u} \phi^{(k-j)} \phi^{(k)} \frac{\langle \psi \rangle}{u} \langle x \rangle^{2(k-\sigma_k)}| \\
\lesssim & \langle x \rangle^{-(\frac{5}{4}-)} \Big\| \frac{\bar{u}_{yy}}{\bar{u}(\bar{u} + u)} \langle x \rangle \Big\|_{L^\infty_\psi} \|\sqrt{u} \rho^{(j)} \langle x \rangle^{j-} \langle \psi \rangle^{\frac{1}{2}} \|_{L^2_\psi} \\
& \times \Big\| \frac{\phi^{(k-j)}}{u^2} \langle x \rangle^{(k-j)+\frac{1}{4}-} \Big\|_{L^\infty_\psi}  \times \| \phi^{(k)} \frac{\sqrt{\langle \psi \rangle}}{\sqrt{u}} \langle x \rangle^{k - \sigma_k} \|_{L^2_\psi} \\
\lesssim & \langle x \rangle^{-(\frac{5}{4}-)} \eps E_j(x) E_{k-1}(x) E_k(x). 
\end{align*}

\noindent The lower order terms from (\ref{djA}) are treated in the same manner. This concludes the proof. 
\end{proof}

Our scheme of \textit{a-priori} estimates, (\ref{energy.1}), (\ref{div.low}), (\ref{rhs.1}), (\ref{div.mid}), (\ref{energy.k}), (\ref{div.high}) , immediately yield the following: 
\begin{proposition}[Global Existence in $X$] Given initial data $u(1,\cdot)$ such that standard compatibility conditions are satisfied, and such that $\phi := u^2 - \bar{u}^2$ is rapidly decaying at $\infty$. Then the unique global solution $u$ guaranteed by Theorem \ref{thm.Oleinik} satisfies $\phi \in X$, where $\phi = u^2 - \bar{u}^2$. 
\end{proposition}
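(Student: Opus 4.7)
The strategy is a continuity (bootstrap) argument on the interval $[1,T]$, closed by the \textit{a priori} estimates assembled in Sections 2--4 and then passed to the limit $T\to\infty$. Theorem \ref{thm.Oleinik} already furnishes a global smooth solution $u$; the only thing to prove is that the quantitative norm $\|\phi\|_X$ defined in (\ref{norm.X}) is finite, uniformly in the truncation time.

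Let $C_0$ be an absolute constant to be fixed below, and define
\[
T^\ast := \sup\Big\{T\ge 1 : \|\phi\|_{X([1,T])} \le 2C_0\,\eps\Big\}.
\]
Since $\phi(1,\cdot)$ is rapidly decaying and the solution depends continuously on $x$, $T^\ast>1$. I will show that on $[1,T^\ast]$ one in fact has $\|\phi\|_X \le C_0\,\eps$, which by continuity forces $T^\ast=\infty$. The key point is that by the rescaling leading to (\ref{eps.nonlin}) every quadratic and cubic term on the right-hand side of the energy and division estimates comes with an explicit $\eps$; under the bootstrap hypothesis each such term contributes at most $\eps\cdot(2C_0\eps)^p$, which is strictly smaller than $(C_0\eps)^2$ for $\eps$ small.

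Next I would sum the inequalities (\ref{energy.1}), (\ref{div.low}), (\ref{rhs.1}), (\ref{div.mid}), (\ref{energy.k}) and (\ref{div.high}) from $k=0$ up to $k=K_0$, integrate in $x$ from $1$ to $T\le T^\ast$, and read off the four pieces of each $\|\phi\|_{X_k}$: the $\tfrac{\p_x}{2}\int$ terms integrate to the $L^\infty_x L^2_\psi$ quantities at $x=T$ (plus controlled initial data), while the positive dissipation terms $\int u|\phi^{(k)}_\psi|^2$ and $\int |\phi^{(k)}_\psi|^2\langle\psi\rangle$ give precisely the $L^2_x L^2_\psi$ pieces of $\|\phi\|_{X_k}$. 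The $\delta$-terms on the right-hand sides are of the same shape as the left-hand sides and are absorbed by choosing $\delta$ sufficiently small relative to the implicit constants. The $C_\delta\langle x\rangle^{-(1+)}E_{j}(x)$ and $C_\delta\langle x\rangle^{-(0+)}I_j(x)$ contributions with $j<k$ are handled inductively on $k$: having already controlled $\sum_{j<k}\|\phi\|_{X_j}$, these pieces are a finite, $\eps$-independent bound that the choice of $C_0$ can swallow. The $\eps\langle x\rangle^{-(1+)}E_{K_0}$ and $\eps\langle x\rangle^{-(0+)}I_{K_0}$ pieces, under the bootstrap, are bounded by $\eps\cdot(2C_0\eps)$, hence yield the clean closing bound $\|\phi\|_X \le C_0\eps$ provided $C_0\eps\ll 1$.

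The main obstacle is the cross-dependence between tiers: the top-tier division estimate (\ref{div.high}) feeds on $E_{K_0}$ via the nonlinear $\eps$-terms, so one must verify that the bootstrap quantity appears linearly (or higher) in every error term with a strictly positive power of $\eps$ in front. This is indeed guaranteed by the rescaling (\ref{eps.nonlin}) and by the careful accounting in the cubic $L^\infty_\psi$ bounds of Lemma \ref{division} and its higher-derivative analogues. A secondary concern is the integrability in $x$ of the right-hand sides: the weights $\langle x\rangle^{-(1+)}$ and $\langle x\rangle^{-(0+)}$ combined with $\sup_x E_j$ and $\int I_j$ give finite $x$-integrals, and the small $\kappa>0$ loss ultimately appearing in Theorem \ref{thm.main} is exactly what prevents logarithmic borderline divergences at the top of the $\sigma_k$ ladder. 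With $\delta$ fixed small, then $C_0$ fixed large, and finally $\eps$ fixed small in terms of $(\delta,C_0,K_0)$, the bootstrap closes; continuity of $T\mapsto\|\phi\|_{X([1,T])}$ forces $T^\ast=\infty$, giving $\phi\in X$.
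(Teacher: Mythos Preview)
Your proposal is correct and matches the paper's approach: the paper gives no detailed argument beyond the sentence ``Our scheme of \textit{a-priori} estimates, (\ref{energy.1}), (\ref{div.low}), (\ref{rhs.1}), (\ref{div.mid}), (\ref{energy.k}), (\ref{div.high}), immediately yield the following,'' and your bootstrap is precisely the standard way to make that sentence rigorous. One small wording correction: the $C_\delta\langle x\rangle^{-(1+)}E_{k-1}$ contributions are not ``$\eps$-independent'' --- they are $O(\|\phi\|_{X_{<k}}^2)=O(\eps^2)$ under the bootstrap --- but that is in fact exactly what you need, since the induction on $k$ then gives $\|\phi\|_{X_k}^2\le C_k(\eps^2+\eps\|\phi\|_X^2)$ with constants $C_k$ depending only on $K_0$ and $\delta$, and the loop closes as you describe.
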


\section{Embeddings}

The reader should recall the specification of the $X$ norm given in (\ref{norm.X}).

\begin{lemma}[$L^2$ Estimates] For $0 \le j \le K_0 - 1$, $0 \le l \le K_0$, $1 \le m \le K_0$, $1 \le n \le K_0 - 1$,
\begin{align} \n
&\| \phi^{(l)} \langle x \rangle^{l-} \|_{L^2_\psi} + \| \phi^{(l)} \langle x \rangle^{l-} \frac{\sqrt{\langle \psi \rangle}}{\sqrt{u}} \|_{L^2_\psi} + \Big\| \frac{\phi^{(j)}}{u^2} \langle x \rangle^{j-} \Big\|_{L^2_\psi}  + \| u^{(n)} \langle x \rangle^{n-\frac{1}{4}} \|_{L^2_\psi} \\ \n
& + \Big\| \frac{u^{(n)}}{u} \langle x \rangle^{n-\frac{1}{4}} \Big\|_{L^2_\psi} + \| \bar{u} u^{(l)} \langle x \rangle^{l-\frac{1}{4}} \|_{L^2_\psi} + \| \sqrt{\bar{u}} \rho^{(l)} \sqrt{\langle \psi \rangle} \langle x \rangle^{l-\frac{1}{4}} \|_{L^2_\psi} \\ \label{L2.emb}
&+ \| \phi^{(j)}_\psi \langle x \rangle^{j + \frac{1}{2}-} \|_{L^2_\psi} + \| \phi^{(j)}_\psi \langle x \rangle^{j+\frac{1}{2}-} \frac{\sqrt{\langle \psi \rangle}}{\sqrt{u}} \|_{L^2_\psi} + \| u \phi^{(j)}_{\psi \psi} \langle x \rangle^{j + 1-} \|_{L^2_\psi} \\ \n
& + \| \sqrt{u} \phi^{(j)}_{\psi \psi} \sqrt{\langle \psi \rangle} \langle x \rangle^{j+1-} \|_{L^2_\psi}  \lesssim  \| \phi \|_X. 
\end{align}
\end{lemma}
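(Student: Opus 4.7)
The first two quantities listed on the right-hand side of (\ref{L2.emb}), namely $\|\phi^{(l)}\langle x\rangle^{l-}\|_{L^2_\psi}$ and $\|\phi^{(l)}\langle x\rangle^{l-}\sqrt{\langle\psi\rangle}/\sqrt{u}\|_{L^2_\psi}$, are read off verbatim from the first two terms of $\|\phi\|_{X_l}$ in (\ref{norm.X}), where the shorthand ``$l-$'' is interpreted as $l-\sigma_l$. For the bound on $\phi^{(j)}/u^2$, the plan is to split at $\xi\sim 1$: in the outer region $u\sim 1$ trivializes the estimate, while in the inner region Oleinik's asymptotics give $u\sim\sqrt{\xi}$ and hence $1/u^2\sim\langle x\rangle^{1/2}/\psi$. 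On the inner region I will use that $\phi^{(j)}(x,0)=0$, a consequence of $u(x,0)=\bar u(x,0)=0$, together with a one-dimensional Hardy inequality to convert $1/\psi$ into a $\p_\psi$ derivative, and then absorb the remainder into the $\sqrt{u}$-weighted components of $\|\phi\|_X$.

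The four bounds on $u^{(n)}$, $u^{(n)}/u$, $\bar u u^{(l)}$, and $\sqrt{\bar u}\rho^{(l)}\sqrt{\langle\psi\rangle}$ will be obtained by differentiating the algebraic identity $\phi=u^2-\bar u^2$ exactly $n$ (respectively $l$) times in $x$, which yields
\[
2u\,u^{(n)} \;=\; \phi^{(n)} \;-\; \sum_{i=1}^{n-1} c_i\, u^{(i)} u^{(n-i)} \;+\; \p_x^n(\bar u^2).
\]
Inverting to isolate $u^{(n)}$ and using the uniform boundedness of $\bar u/u$ (Oleinik near $\psi=0$ gives $u,\bar u\sim\eta$, while both tend to $1$ at infinity) converts these quantities in $L^2_\psi$ into $\|\phi^{(n)}\|_{L^2_\psi}$ or its $\sqrt{\langle\psi\rangle}/\sqrt{u}$-weighted analogue, plus explicit Blasius terms satisfying the required decay by direct computation. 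An induction on $n$ handles the quadratic lower-order commutators $u^{(i)}u^{(n-i)}$. The $\rho^{(l)}$ estimate additionally uses $\rho = \phi/(u+\bar u)$ with $u+\bar u\gtrsim\sqrt{u\,\bar u}$.

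The three bounds involving $\phi^{(j)}_{\psi\psi}$ will be extracted from the equation (\ref{dx.k}), rewritten as
\[
u\,\phi^{(j)}_{\psi\psi} \;=\; \phi^{(j+1)} + A\phi^{(j)} - \sum_{i=1}^{j} c_i\, u^{(i)}\, \phi^{(j-i)}_{\psi\psi} + \sum_{i=0}^{j} c_i\, \p_x^i A\cdot \phi^{(j-i)}.
\]
The top-order piece $\phi^{(j+1)}$ lies in $L^\infty_x L^2_\psi$ with the correct weight precisely when $j+1\le K_0$, matching the range $j\le K_0-1$ declared in the lemma. The pointwise bound (\ref{est.A}) on $A$, together with the $u^{(i)}$ estimates from the previous paragraph, controls the commutators by induction on $j$. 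The weighted versions $\sqrt{u}\,\phi^{(j)}_{\psi\psi}\sqrt{\langle\psi\rangle}$ follow from the same identity after dividing by $\sqrt{u}$ and multiplying by $\sqrt{\langle\psi\rangle}$, invoking the second ($L^\infty_x L^2_\psi$) component of $\|\phi\|_X$.

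Finally, for the two bounds on $\phi^{(j)}_\psi$, I will integrate by parts pointwise in $x$,
\[
\int |\phi^{(j)}_\psi|^2\, d\psi \;=\; -\int \phi^{(j)}\, \phi^{(j)}_{\psi\psi}\, d\psi,
\]
where both boundary terms vanish ($\phi^{(j)}(x,0)=0$ by the previous discussion, and $\phi^{(j)}\to 0$ as $\psi\to\infty$ by the decay assumption on the data propagated forward). Cauchy--Schwarz with the splitting $\phi^{(j)}/\sqrt{u}\cdot\sqrt{u}\,\phi^{(j)}_{\psi\psi}$ reduces the left-hand side to a product of quantities already bounded in the previous two paragraphs, and the exponents combine to give exactly the $\langle x\rangle^{j+\frac{1}{2}-}$ budget claimed. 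The weighted variant is treated by the analogous IBP with weight $\langle\psi\rangle/u$, whose additional $\p_\psi(\langle\psi\rangle/u)$ commutators are absorbed inductively. I expect the main technical obstacle to be the bookkeeping of $\sqrt{u}$ weights in these Hardy and integration-by-parts steps, since the $X$ norm leaves essentially no slack; in particular the weighted IBP produces a $\psi=0$ boundary contribution that must be recognized, via $\phi/u=\rho(u+\bar u)/u$, as bounded rather than singular.
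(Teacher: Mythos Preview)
Your approach is largely sound and in several places matches the paper exactly (the first two terms read off from $X$; the $u^{(n)}$ and $\rho^{(l)}$ bounds via the algebraic relation $\rho=\phi/(u+\bar u)$; the $\phi^{(j)}_{\psi\psi}$ bounds from the equation). The genuinely different step is your treatment of $\|\phi^{(j)}_\psi\|_{L^2_\psi}$: you integrate by parts in $\psi$ and feed in the equation-based bound on $\sqrt{u}\,\phi^{(j)}_{\psi\psi}$, whereas the paper uses a one-dimensional Agmon inequality \emph{in the $x$ variable},
\[
\|\phi^{(j)}_\psi\langle x\rangle^{j+\frac12-}\|_{L^2_\psi}
\;\lesssim\;
\|\phi^{(j)}_\psi\langle x\rangle^{j-}\|_{L^2_xL^2_\psi}^{1/2}
\Big(\|\phi^{(j+1)}_\psi\langle x\rangle^{j+1-}\|_{L^2_xL^2_\psi}^{1/2}+\dots\Big),
\]
which converts the $L^2_x$ dissipation components of $\|\phi\|_X$ directly into a pointwise-in-$x$ bound. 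Your route uses only the $L^\infty_x$ energy components plus the equation; both give the same rate $\langle x\rangle^{-(j+\frac12-)}$.

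There is, however, an ordering gap you should address. In your treatment of $\|\phi^{(j)}/u^2\|_{L^2_\psi}$ you write that after Hardy you will ``absorb the remainder into the $\sqrt u$-weighted components of $\|\phi\|_X$''. The only $\sqrt u$-weighted pieces of $X$ are the dissipation terms $\|\sqrt u\,\phi^{(k)}_\psi\|_{L^2_xL^2_\psi}$, which are $L^2$ in $x$ and therefore give no pointwise-in-$x$ control; Hardy leaves you with $\sqrt{x}\,\|\phi^{(j)}_\psi\|_{L^2_\psi(\xi\lesssim 1)}$ at a \emph{fixed} $x$, which you cannot read off from $X$ directly. The paper resolves this by establishing the $\phi^{(j)}_\psi$ bound \emph{first} (via Agmon, which needs nothing but the $X$-norm), and only then doing the Hardy step. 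In your scheme the $\phi^{(j)}_\psi$ bound comes last and depends on $\phi^{(j)}_{\psi\psi}$, whose commutators in turn involve $u^{(i)}$, which you obtain from $\phi^{(i)}/u^2$ --- so as written the argument is circular. You can repair this either by adopting the Agmon step for $\phi^{(j)}_\psi$, or by reorganizing into a single induction on $j$ in the order $\phi^{(j)}_{\psi\psi}\to\phi^{(j)}_\psi\to\phi^{(j)}/u^2\to u^{(j+1)}$, handling the top-index commutator $u^{(j)}\phi_{\psi\psi}$ via the split $u^{(j)}=\bar u^{(j)}+\eps\rho^{(j)}$ and smallness.
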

\begin{proof} The first two terms, (\ref{L2.emb}.1) and (\ref{L2.emb}.2) are part of the $X$ norm. We next move to (\ref{L2.emb}.8). By using the Agmon inequality in the $x$ direction and subsequently Hardy inequality, we obtain 
\begin{align*}
\| \phi^{(j)}_\psi \langle x \rangle^{j + \frac{1}{2}-} \|_{L^2_\psi} \lesssim & \| \phi^{(j)}_\psi \langle x \rangle^{j-} \|_{L^2_x L^2_\psi}^{\frac{1}{2}} \Big[\| \phi^{(j+1)}_{\psi} \langle x \rangle^{j + 1-} \|_{L^2_x L^2_\psi}^{\frac{1}{2}} + \| \phi^{(j)}_\psi \langle x \rangle^{j-} \|_{L^2_x L^2_\psi}^{\frac{1}{2}}\Big] \\
\lesssim & \| \phi^{(j)}_\psi \langle x \rangle^{j-} \|_{L^2_x L^2_\psi}^{\frac{1}{2}} \| \phi_{\psi}^{(j+1)} \langle x \rangle^{j+1-} \|_{L^2_x L^2_\psi}^{\frac{1}{2}} \\
\lesssim & \| \phi \|_{X},
\end{align*}

\noindent since $j \le K_0 - 1$. The same exact proof works for (\ref{L2.emb}.9).

We may now estimate the third term, (\ref{L2.emb}.3). Since $u \gtrsim 1$ on $\xi \gtrsim 1$, (\ref{L2.emb}.3) on $\xi \gtrsim 1$ follows from (\ref{L2.emb}.1). We can thus restrict to $\xi \lesssim 1$, in which case we use that $u^2\gtrsim \eta^2 \gtrsim \xi = \frac{\psi}{\sqrt{x}}$ on the region where $\xi \lesssim 1$: 
\begin{align*}
\| \frac{\phi^{(j)}}{u^2} \|_{L^2_\psi(\xi \lesssim 1)} \lesssim & \| \frac{\phi^{(j)}}{\xi} \|_{L^2_\psi(\xi \lesssim 1)} = \sqrt{x} \| \frac{\phi^{(j)}}{\psi} \|_{L^2_\psi(\xi \lesssim 1)} \\
\lesssim & \sqrt{x} \| \phi^{(j)}_\psi \|_{L^2_\psi(\xi \lesssim 1)} + \| \phi^{(j)} \chi(\xi \sim 1) \|_{L^2_\psi} \\
\lesssim & \langle x \rangle^{\frac{1}{2}} \langle x \rangle^{-j-(\frac{1}{2}-)} + \langle x \rangle^{-(j-)}. 
\end{align*}

\noindent Above, we have used the Hardy inequality in the $\psi$ direction, admissible because $\phi^{(j)}|_{\psi = 0} = 0$. We have also used (\ref{L2.emb}.8), which is the reason we must restrict $j \le K_0 - 1$. 

For (\ref{L2.emb}.4), we split $u^{(n)} = \bar{u}^{(n)} + \rho^{(n)}$. As $\bar{u}^{(n)}$ trivially satisfies this inequality (since $n \ge 1$), we must treat $\rho^{(n)}$. By using the identity $\rho = \frac{\phi}{u + \bar{u}}$, we obtain 
\begin{align*}
\rho^{(n)} = \frac{\phi^{(n)}}{u + \bar{u}} + \sum_{k < n} c_k \phi^{(k)} \p_x^{n-k} \frac{1}{u + \bar{u}},
\end{align*}

\noindent from where we obtain 
\begin{align} \label{extra}
\| \rho^{(n)} \|_{L^2_\psi} \lesssim \Big\| \frac{\phi^{(n)}}{u + \bar{u}} \Big\|_{L^2_\psi} \lesssim \Big\| \frac{\phi^{(n)}}{\bar{u}} \Big\|_{L^2_\psi} \lesssim \Big\| \frac{\phi^{(n)}}{\bar{u}^2} \Big\|_{L^2_\psi}
\end{align}

\noindent from here the result follows from the corresponding $\phi^{(n)}$ estimate, which we can use because $n \le N_0 - 1$. The same proof works for  (\ref{L2.emb}.5) upon noticing that we can put an extra factor of $\frac{1}{u}$ in (\ref{extra}).

For the term (\ref{L2.emb}.10), we have, using the equation (\ref{nonlin.eq}),
\begin{align*}
 \| u \phi_{\psi \psi} \|_{L^2_\psi} \lesssim  \| \phi_x \|_{L^2_\psi} + \| A \phi \|_{L^2_\psi} \lesssim  \langle x \rangle^{-(1-)} \| \phi \|_{X_1}.
\end{align*}

\noindent Clearly, we may upgrade to the general $j \le K_0 - 1$ case. The proof of (\ref{L2.emb}.11) works in an identical manner. This concludes the proof. 

\end{proof}

A key feature we take advantage of is that decay is enhanced in the region $\xi \lesssim 1$. Note that this type of enhanced decay is not available at the top two orders of $\p_x$ (as seen by the restriction on $j$ below).

\begin{lemma}[$L^2(\xi \lesssim 1)$ Estimates] For $\alpha = 0,1,2$, and for $0 \le j \le K_0-2$,
\begin{align} \label{L2.enhance}
\| \p_\psi^\alpha \phi^{(j)} \langle x \rangle^{j+ \frac{\alpha}{2}+ \frac{1}{4}-} \|_{L^2_\psi(\xi \lesssim 1)}  \lesssim \| \phi \|_{X}.
\end{align}
\end{lemma}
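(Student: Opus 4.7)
The proof rests on the key relation $u \sim \sqrt{\xi} = \psi^{1/2}(x+1)^{-1/4}$ valid throughout the region $\xi \lesssim 1$ (as established in Theorem \ref{thm.Oleinik} and the subsequent discussion). This identity converts the $\sqrt{u}$-weights appearing in the $X$-norm into explicit power-law weights in $\psi$ and $x$, and in particular, integration against the region (which has $\psi$-length $\sim \sqrt{x+1}$) supplies the enhanced $\langle x\rangle^{1/4}$ gain over the baseline $X$-norm decay. I would treat the three cases $\alpha = 0, 1, 2$ by bootstrapping: handle $\alpha = 2$ first via the evolution equation, then deduce $\alpha = 0$ by Poincar\'e, and close the loop at $\alpha = 1$ by Gagliardo--Nirenberg-type interpolation.

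For the $\alpha = 2$ case, I would invert the equation \eqref{dx.k} to write
\begin{align*}
u\phi^{(j)}_{\psi\psi} = \phi^{(j+1)} + A\phi^{(j)} - \sum_{i=1}^j c_i u^{(i)}\phi^{(j-i)}_{\psi\psi} + \sum_{i=1}^j c_i \p_x^i A\,\phi^{(j-i)},
\end{align*}
so that in the region $\xi\lesssim 1$, the factor $1/u \sim (x+1)^{1/4}/\sqrt{\psi}$ yields
\begin{align*}
\int_{\xi\lesssim 1} |\phi^{(j)}_{\psi\psi}|^2\, \ud\psi \lesssim (x+1)^{1/2} \int_{\xi\lesssim 1} \frac{|\phi^{(j+1)}|^2}{\psi}\,\ud\psi + \text{(lower-order terms)}.
\end{align*}
I would then estimate $\int |\phi^{(j+1)}|^2/\psi\,\ud\psi$ by Cauchy--Schwarz in $\psi$, followed by Hardy's inequality (valid since $\phi^{(j+1)}|_{\psi=0} = 0$, a consequence of $\phi|_{\psi=0}=0$), to arrive at $\|\phi^{(j+1)}\|_{L^2_\psi}^{1/2}\|\phi^{(j+1)}_\psi\|_{L^2_\psi}^{1/2}$-type factors. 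Using the baseline $X$-norm controls on $\phi^{(j+1)}$ and on $\phi^{(j+1)}_\psi$ (the latter supplied by \eqref{L2.emb}.8) yields the desired estimate, with the derivative loss forcing $j+1 \le K_0 - 1$, i.e., $j \le K_0 - 2$. The lower-order commutator terms are handled analogously using \eqref{unif.0}, \eqref{unif.1}, and the embedding estimates already in place.

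The $\alpha = 0$ case then follows from $\alpha = 2$ (together with the baseline $L^2$ bound from $X$) by Gagliardo--Nirenberg interpolation in $\psi$, $\|\phi^{(j)}_\psi\|_{L^2_\psi}^2 \lesssim \|\phi^{(j)}\|_{L^2_\psi}\|\phi^{(j)}_{\psi\psi}\|_{L^2_\psi}$, and ultimately from $\alpha = 1$ by Poincar\'e on the interval $[0,\sqrt{x+1}]$ using $\phi^{(j)}|_{\psi=0}=0$; the length $\sqrt{x+1}$ of the region converts the $\alpha=1$ decay $\langle x\rangle^{-j-3/4+}$ into the $\alpha=0$ decay $\langle x\rangle^{-j-1/4+}$. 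For $\alpha = 1$, I would apply Agmon in $x$ to the $L^2_xL^2_\psi$ bounds on $\sqrt{u}\phi^{(j)}_\psi$ and $\sqrt{\psi}\phi^{(j)}_\psi$ embedded in the $X$-norm, upgrade to $L^\infty_x L^2_\psi$ at the next weight level, and then exploit both resulting estimates (one giving $\psi^{1/4}$, the other $\psi^{1/2}$) by Cauchy--Schwarz to remove the $\psi$-weights in the bulk $\psi \ge 1$, combined with the enhanced weight from $\sqrt{\langle\psi\rangle}/\sqrt{u}$ from (L2.emb.9) near $\psi \le 1$.

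The main obstacle is tracking the bookkeeping of $x$ and $\psi$ exponents across the substitution $u \sim \sqrt{\xi}$, Hardy/Cauchy--Schwarz, and the restriction to the region of measure $\sqrt{x+1}$. The $\alpha = 2$ estimate is where the derivative loss is incurred and where the Hardy inequality must absorb the $1/u$ singularity near $\psi = 0$ without spoiling the $x$-decay; once this is secured, the remaining two cases follow almost formally from standard interpolation identities applied to functions vanishing at $\psi = 0$.
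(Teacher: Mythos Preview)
Your overall strategy---invert the equation for $\alpha=2$, then close $\alpha=0,1$ by Poincar\'e and Gagliardo--Nirenberg---matches the paper's. However, there is a genuine gap in your $\alpha=2$ step that loses exactly the $\langle x\rangle^{1/4}$ gain the lemma is claiming.

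Following your outline, Cauchy--Schwarz followed by standard Hardy gives
\[
\int_{\xi\lesssim 1}\frac{|\phi^{(j+1)}|^2}{\psi}\,\ud\psi \lesssim \|\phi^{(j+1)}\|_{L^2_\psi}\,\|\phi^{(j+1)}_\psi\|_{L^2_\psi},
\]
and plugging in the baseline bounds $\|\phi^{(j+1)}\|\lesssim\langle x\rangle^{-(j+1)+}$ and (\ref{L2.emb}.8) yields only
\[
\|\phi^{(j)}_{\psi\psi}\|_{L^2_\psi(\xi\lesssim 1)}^2 \lesssim \langle x\rangle^{1/2}\langle x\rangle^{-(j+1)+}\langle x\rangle^{-(j+1)-\frac{1}{2}+} = \langle x\rangle^{-2j-2+},
\]
i.e.\ $\|\phi^{(j)}_{\psi\psi}\|_{L^2(\xi\lesssim 1)}\lesssim\langle x\rangle^{-j-1+}$, whereas the target is $\langle x\rangle^{-j-\frac{5}{4}+}$. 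The deficit propagates through your Poincar\'e and interpolation steps, so none of $\alpha=0,1,2$ reaches the stated rate.

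The missing idea is that the weight $1/\psi$ sits exactly at the \emph{critical} Hardy exponent, so splitting it by Cauchy--Schwarz is wasteful. The paper instead nudges to the sub-critical weighted Hardy inequality
\[
\Big\|\frac{f}{\psi^{(1+)/2}}\Big\|_{L^2_\psi}\lesssim \|\psi^{(1-)/2}f_\psi\|_{L^2_\psi},
\]
paying only $x^{0+}$, which produces $\|\sqrt{\psi}\,\phi^{(j+1)}_\psi\|_{L^2_\psi}$ on the right-hand side. This quantity is controlled at rate $\langle x\rangle^{-(j+1)-\frac{1}{2}+}$ precisely because of the division estimate (the $\|\sqrt{\psi}\,\phi^{(k)}_\psi\|_{L^2_xL^2_\psi}$ component of the $X$-norm, upgraded to $L^\infty_x$ by Agmon as in (\ref{L2.emb}.8)--(\ref{L2.emb}.9)). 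With this input,
\[
\|\phi^{(j)}_{\psi\psi}\|_{L^2(\xi\lesssim 1)}\lesssim \langle x\rangle^{\frac{1}{4}+}\|\sqrt{\psi}\,\phi^{(j+1)}_\psi\|_{L^2_\psi}\lesssim \langle x\rangle^{-j-\frac{5}{4}+},
\]
which is the correct rate. In short, the enhanced $\langle x\rangle^{1/4}$ gain in this lemma is not a generic feature of the localization but comes specifically from the extra $\sqrt{\psi}$-weighted control furnished by the division estimate; your argument never invokes that weight. Once you fix $\alpha=2$ this way, your bootstrap for $\alpha=0,1$ (which coincides with the paper's) goes through.
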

\begin{proof} We address the $j = 0$ case, the general $j$ case being analogous. We begin with rearranging (\ref{nonlin.eq}) to obtain 
\begin{align*}
\| \phi_{\psi \psi} \|_{L^2_\psi (\xi \lesssim 1)} = & \| \frac{1}{u} \phi_x \|_{L^2_\psi(\xi \lesssim 1)} + \| \frac{1}{u} A \phi \|_{L^2_\psi(\xi \lesssim 1)} \\
\lesssim & \| \frac{1}{\sqrt{\xi}} \phi_x \|_{L^2_\psi(\xi \lesssim 1)} + \| \frac{1}{\sqrt{\xi}} A \phi \|_{L^2_\psi(\xi \lesssim 1)} \\
= & \| \frac{x^{\frac{1}{4}}}{\psi^{\frac{1}{2}}} ( \phi_x + A \phi) \|_{L^2_{\psi}(\xi \lesssim 1)} \\
\lesssim & x^{\frac{1+}{4}} \| \frac{1}{\psi^{\frac{1+}{2}}} (\phi_x + A \phi) \|_{L^2_\psi(\xi \lesssim 1)} \\
\lesssim & x^{\frac{1+}{4}} \| \psi^{\frac{1-}{2}} ( \phi^{(1)}_{\psi} + \p_\psi \{ A \phi \} ) \|_{L^2_\psi(\xi \lesssim 1)} \\
\lesssim & x^{\frac{1+}{4}} \| \langle \sqrt{\psi} \rangle \phi^{(1)}_\psi \|_{L^2_\psi(\xi \lesssim 1) } + \\
\lesssim & x^{\frac{1+}{4}} \langle x \rangle^{-(1-)} \| \langle \sqrt{\psi} \rangle \phi^{(1)}_\psi \langle x \rangle^{1-\sigma_1} \|_{L^2_\psi(\xi \lesssim 1)} \\
\lesssim & \langle x \rangle^{-\frac{5-}{4}} \| \phi \|_X. 
\end{align*}

\noindent Above, we have used that, for $j \le k-1$, $\| \langle \sqrt{\psi} \rangle \phi^{(j)}_\psi \|_{L^2_\psi} \lesssim \langle x \rangle^{-j-(\frac{1}{2}-)}$ according to (\ref{L2.emb}.7). 

We localize the $\phi^{(j)}$ estimate via 
\begin{align*}
|\phi| = |\int_0^\psi \phi_{\psi}| \lesssim \sqrt{\psi} \| \phi_\psi \|_{L^2_\psi(\xi \lesssim 1)} \lesssim \langle x \rangle^{-\frac{1}{2}} \Big( \langle x \rangle^{\frac{3}{4}} \| \phi_\psi \|_{L^2_\psi(\xi \lesssim 1)} \Big),
\end{align*}

\noindent which implies 
\begin{align*}
\| \phi \|_{L^2_\psi(\xi \lesssim 1)} \lesssim \langle x \rangle^{-(\frac{1}{4}-)} \Big( \langle x \rangle^{\frac{3}{4}} \| \phi_\psi \|_{L^2_\psi(\xi \lesssim 1)} \Big).
\end{align*}

For the enhanced localized $\phi_\psi$ estimate, we have 
\begin{align*}
\| \phi_\psi \|_{L^2_\psi(\xi \lesssim 1)} \lesssim & \| \phi \|_{L^2_\psi(\xi \lesssim 1)}^{\frac{1}{2}}  \| \phi_{\psi \psi} \|_{L^2_\psi(\xi \lesssim 1)}^{\frac{1}{2}} \\
\lesssim &  \langle x \rangle^{-(\frac{1}{8}-)} \Big( \langle x \rangle^{\frac{3}{4}-} \| \phi_\psi \|_{L^2_\psi(\xi \lesssim 1)} \Big)^{\frac{1}{2}} \langle x \rangle^{-(\frac{5}{8}-)} \| \phi \|_X^{\frac{1}{2}}. 
\end{align*} 

This concludes the proof.

\end{proof}
\begin{lemma}[$L^\infty$ Estimates] Let $0 \le j \le K_0 - 2$, 
\begin{align}  \n
&\Big[ \Big\| \frac{\phi^{(j)}}{u^2} \Big\|_{L^\infty_\psi} + \Big\| \frac{\phi^{(j)}}{u} \Big\|_{L^\infty_\psi} + \| \phi^{(j)} \|_{L^\infty_\psi} + \Big\| \frac{\rho^{(j)}}{u} \Big\|_{L^\infty_\psi} \Big] \langle x \rangle^{j + (\frac{1}{4}-)} \\ \label{unif.0}
& \hspace{15 mm} + \Big\| \frac{u^{(j)}}{u} \Big\|_{L^\infty_\psi} \langle x \rangle^{j} \lesssim \| \phi \|_X, \\ \label{unif.1}
& \| \phi^{(j)}_\psi \langle x \rangle^{j+(\frac{3}{4}-)}  \|_{L^\infty_\psi} + \| u^{(j)}_y \langle x \rangle^{j + \frac{1}{2}} \|_{L^\infty_\psi}  \lesssim \| \phi \|_X, \\ \label{unif.2}
&\| \bar{u} \phi^{(j)}_{\psi \psi} \|_{L^\infty_\psi} \langle x \rangle^{j + (\frac{5}{4}-)} + \| \sqrt{\bar{u}} \phi^{(j)}_{\psi \psi} \|_{L^\infty_\psi(\xi \lesssim 1)} \langle x \rangle^{j+\frac{3}{2}-} \lesssim \| \phi \|_X. 
\end{align}
\end{lemma}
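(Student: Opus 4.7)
The plan is to deduce all three estimates \eqref{unif.0}--\eqref{unif.2} from the Agmon-type inequality
\[
\|f\|_{L^\infty_\psi}^2 \lesssim \|f\|_{L^2_\psi}\|f_\psi\|_{L^2_\psi},
\]
together with a boundary-Hardy argument near $\{\psi = 0\}$, using as input the $L^2$-embedding lemma \eqref{L2.emb} and the enhanced near-boundary embedding \eqref{L2.enhance}. The arithmetic of the weights is dictated by $u\sim \eta \sim \sqrt{\xi}=\sqrt{\psi}\langle x\rangle^{-1/4}$ in the region $\xi\lesssim 1$ and $u\gtrsim 1$ in the region $\xi\gtrsim 1$, so the two ranges will be treated separately. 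Throughout, the boundary condition $\phi^{(j)}|_{\psi=0}=0$ (inherited from $u-\bar u=0$ at $y=0$) is what converts boundary weights $1/u$ into derivatives.

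For \eqref{unif.0}, I would first establish the ``hardest'' bound $\|\phi^{(j)}/u^2\|_{L^\infty_\psi}$. On $\xi\gtrsim 1$ one has $u\gtrsim 1$, so this reduces to $\|\phi^{(j)}\|_{L^\infty_\psi}\lesssim \|\phi^{(j)}\|_{L^2_\psi}^{1/2}\|\phi^{(j)}_\psi\|_{L^2_\psi}^{1/2}$ via Agmon and \eqref{L2.emb}.1, \eqref{L2.emb}.8, giving $\langle x\rangle^{-j-(1/4-)}$. On $\xi\lesssim 1$, I rewrite $\frac{\phi^{(j)}}{u^2}\sim\langle x\rangle^{1/2}\frac{\phi^{(j)}}{\psi}$ and apply Agmon to $\phi^{(j)}/\psi$, controlling its $\psi$-derivative by $\phi^{(j)}_\psi/\psi$ plus $\phi^{(j)}/\psi^2$; both are then absorbed by the enhanced localized estimates \eqref{L2.enhance} and Hardy in $\psi$ (valid because $\phi^{(j)}$ vanishes at the boundary). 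The remaining estimates $\|\phi^{(j)}/u\|_{L^\infty_\psi}$ and $\|\phi^{(j)}\|_{L^\infty_\psi}$ follow by the same scheme with fewer powers of $u$. For $\|\rho^{(j)}/u\|_{L^\infty_\psi}$, write $\rho=\phi/(u+\bar u)$, take $\partial_x^j$ (moving lower derivatives of $1/(u+\bar u)$ harmlessly since $u+\bar u\gtrsim \bar u$), and reduce to the $\phi^{(j)}/u^2$ estimate. Finally, $\|u^{(j)}/u\|_{L^\infty_\psi}$ splits into a Blasius piece (known) and a $\rho^{(j)}/u$ piece just handled.

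For \eqref{unif.1}, I apply Agmon directly to $\phi^{(j)}_\psi$, using \eqref{L2.emb}.8 for $\|\phi^{(j)}_\psi\|_{L^2_\psi}$ and \eqref{L2.emb}.10 for $\|u\phi^{(j)}_{\psi\psi}\|_{L^2_\psi}$, together with the elementary control $\|(u\phi^{(j)}_\psi)_\psi\|_{L^2_\psi}\lesssim \|u\phi^{(j)}_{\psi\psi}\|_{L^2_\psi}+\|u_\psi\phi^{(j)}_\psi\|_{L^2_\psi}$. For $\|u^{(j)}_y\|_{L^\infty_\psi}$, I use $\partial_y=u\partial_\psi$ to write $u^{(j)}_y=u\,u^{(j)}_\psi$ and then split $u^{(j)}=\bar u^{(j)}+\rho^{(j)}$; the Blasius piece is explicit, and $\rho^{(j)}$ is handled by differentiating the identity $\rho(u+\bar u)=\phi$ in $\psi$ and inserting the bounds already obtained.

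The main obstacle is \eqref{unif.2}, because we do \emph{not} have direct $L^2$ control of $\partial_\psi(u\phi^{(j)}_{\psi\psi})$. The idea is to bypass this by using the $\partial_x^j$-differentiated equation \eqref{dx.k} to replace $u\phi^{(j)}_{\psi\psi}$ by
\[
u\phi^{(j)}_{\psi\psi} \;=\; \phi^{(j+1)} \;+\; A\phi^{(j)} \;-\; \sum_{m=1}^{j} c_m u^{(m)}\phi^{(j-m)}_{\psi\psi} \;+\; \sum_{m=1}^{j} c_m \partial_x^m A\,\phi^{(j-m)}.
\]
Agmon applied to the right-hand side now only produces $\phi^{(j+1)}$, $\phi^{(j+1)}_\psi$, and lower-$\psi$-derivative commutator terms; this is the reason for the restriction $j \le K_0-2$, which allows invocation of \eqref{L2.emb}.8 at order $j+1\le K_0-1$. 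Multiplying through by $\bar u/u\lesssim 1$ and tracking weights yields the $\langle x\rangle^{-j-(5/4-)}$ bound. For the enhanced estimate on $\xi\lesssim 1$, I repeat this argument but use the localized inputs from \eqref{L2.enhance}: the factor $\sqrt{\bar u}\sim\xi^{1/4}$ exactly matches the $\psi^{1/4}$ weight in \eqref{L2.enhance}, and Agmon on $\sqrt{\bar u}\phi^{(j)}_{\psi\psi}$ via the equation gains the extra $\langle x\rangle^{1/4}$ decay. Commutator terms where one factor has $\ge K_0-2$ derivatives are the delicate point; these are controlled by pairing the high factor in $L^2$ via \eqref{L2.emb} with the already-established $L^\infty$ bounds \eqref{unif.0}--\eqref{unif.1} at lower order, which is the standard tame structure and consistent with $K_0\gg 1$.
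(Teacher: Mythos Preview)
Your overall architecture --- Agmon interpolation plus the equation for the second-derivative bounds --- matches the paper, and your treatment of \eqref{unif.2} via the differentiated equation is essentially identical to what the paper does. But there is a genuine gap in your argument for \eqref{unif.0}.1, and a related ordering issue.

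For $\|\phi^{(j)}/u^2\|_{L^\infty_\psi}$ in the region $\xi\lesssim 1$, you propose to apply Agmon to $\phi^{(j)}/\psi$, which requires $L^2$ control of $\partial_\psi(\phi^{(j)}/\psi)=\phi^{(j)}_\psi/\psi-\phi^{(j)}/\psi^2$. Neither of these is in $L^2_\psi$ near $\psi=0$ in general: recalling $\phi_\psi=2(u_y-\bar u_y)$, one has $\phi^{(j)}_\psi(x,0)=2(\partial_x^j u_y-\partial_x^j\bar u_y)|_{y=0}$, which does not vanish, so $\phi^{(j)}_\psi/\psi\sim c/\psi$ and $\phi^{(j)}/\psi^2\sim c/\psi$ near $\psi=0$. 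The Hardy step you invoke is only valid for $\phi^{(j)}/\psi$ (one power), not for the second power, precisely because only $\phi^{(j)}$, not $\phi^{(j)}_\psi$, vanishes at the boundary. Hence the Agmon route for $\phi^{(j)}/u^2$ collapses.

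The paper avoids this by reversing the order: it first establishes \eqref{unif.1}.1, i.e.\ $\|\phi^{(j)}_\psi\|_{L^\infty_\psi}\lesssim\langle x\rangle^{-j-(3/4-)}\|\phi\|_X$, and only then proves \eqref{unif.0}.1 by the elementary pointwise bound
\[
\Big|\frac{\phi^{(j)}}{u^2}\Big|\lesssim \frac{\sqrt{x}}{\psi}\Big|\int_0^\psi \phi^{(j)}_\psi\,d\psi'\Big|\le \sqrt{x}\,\|\phi^{(j)}_\psi\|_{L^\infty_\psi},
\]
which bypasses any need for $\phi^{(j)}_\psi$ to vanish at $\psi=0$. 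Note also that the Agmon step for \eqref{unif.1}.1 itself requires the \emph{unweighted} $\|\phi^{(j)}_{\psi\psi}\|_{L^2_\psi}$, not $\|u\phi^{(j)}_{\psi\psi}\|_{L^2_\psi}$ as you wrote; the paper obtains this by combining \eqref{L2.emb}.10 (which handles $\xi\gtrsim 1$ where $u\gtrsim 1$) with the enhanced localized bound \eqref{L2.enhance} (which handles $\xi\lesssim 1$). Your sketch omits this splitting, so as written your \eqref{unif.1} argument also has a small hole, though an easily reparable one.
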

\begin{proof} We start with (\ref{unif.0}.3), for which a standard interpolation gives 
\begin{align*}
\| \phi^{(j)} \|_{L^\infty_\psi} \lesssim & \| \phi^{(j)} \|_{L^2_\psi}^{\frac{1}{2}} \| \phi^{(j)}_\psi \|_{L^2_\psi}^{\frac{1}{2}} \\
\lesssim & \Big( \langle x \rangle^{j-} \| \phi \|_{X_j} \Big)^{\frac{1}{2}} \Big( \langle x \rangle^{-j - (\frac{1}{2}-)} \| \phi^{(j)} \langle x \rangle^{j + \frac{1}{2}-} \|_{L^2_\psi} \Big)^{\frac{1}{2}},
\end{align*}

\noindent upon using (\ref{L2.emb}.1) and (\ref{L2.emb}.8).

We now estimate (\ref{unif.1}.1). We simply interpolate (integrating from $\psi = \infty$): 
\begin{align*}
\|\phi_\psi^{(j)}\|_{L^\infty_\psi} \lesssim \| \phi_\psi^{(j)} \|_{L^2_\psi}^{\frac{1}{2}} \| \phi_{\psi \psi}^{(j)} \|_{L^2_\psi}^{\frac{1}{2}} \lesssim \Big( \langle x \rangle^{-j-(\frac{1}{2}-)} \Big)^{\frac{1}{2}} \Big(\langle x \rangle^{-j-(1-)} \Big)^{\frac{1}{2}},
\end{align*}

\noindent upon using (\ref{L2.emb}.8), (\ref{L2.emb}.10), and (\ref{L2.enhance}).

We now move to (\ref{unif.0}.1). Clearly we may restrict to the region $\xi \lesssim 1$, in which case $u^2 \sim \eta^2 \sim \xi$. Since $\phi|_{\psi = 0} = 0$, we may write 
\begin{align*}
|\frac{\phi}{u^2}| \lesssim & \frac{1}{\xi} |\phi(x,\psi)| \lesssim \frac{\sqrt{x}}{\psi} |\int_0^\psi \phi_\psi(x,\psi') \ud \psi'| \lesssim  \sqrt{x} \| \phi_\psi \|_{L^\infty_\psi} \\
\lesssim & \langle x \rangle^{\frac{1}{2}} \langle x \rangle^{-\frac{3-}{4}} = \langle x \rangle^{-\frac{1-}{4}}.
\end{align*}

\noindent The proofs of (\ref{unif.0}.2) and (\ref{unif.0}.4) are identical.  

We move to (\ref{unif.1}.2). Recall that 
\begin{align*}
\p_\psi \phi = 2 uu_y \frac{1}{u} - 2 \bar{u} \bar{u}_y \frac{1}{\bar{u}} = 2 (u_y - \bar{u}_y), 
\end{align*}

\noindent and thus the estimate follows upon noticing that it holds for both $\phi^{(j)}_\psi$ and $\bar{u}^{(j)}_y$. 

Finally, to estimate the $\phi_{\psi \psi}$ term, we use the equation. The general $j$ case follows in a similar manner, so we deal with $j = 0$: 
\begin{align*}
\| \bar{u} \phi^{(j)}_{\psi \psi} \|_{L^\infty_\psi} = \| u^{(j+1)} \|_{L^\infty_\psi} +  \| \frac{1}{x} u^{(j)} \|_{L^\infty_\psi} \lesssim \langle x \rangle^{-(j+1) - (\frac{1}{4}-)} \| \phi \|_X. 
\end{align*}

In a similar fashion, 
\begin{align*}
\| \sqrt{\bar{u}} \phi_{\psi \psi} \|_{L^\infty_\psi(\xi \lesssim 1)}  = & \| \frac{1}{\sqrt{\bar{u}}} \phi^{(1)} \|_{L^\infty_\psi(\xi \lesssim 1)} + \| \frac{A}{\sqrt{u}} \phi \|_{L^\infty_\psi(\xi \lesssim 1)}.
\end{align*}

\noindent We treat the first term, as the second term above is analogous. We estimate 
\begin{align*}
|\frac{1}{u} \phi^{(1)}| \lesssim & \langle x \rangle^{\frac{1}{8}} \psi^{-\frac{1}{4}} |\int_0^\psi \phi^{(1)}_\psi| \lesssim \langle x \rangle^{\frac{1}{8}} \psi^{-\frac{1}{4}} \psi^{\frac{1}{2}} \| \phi^{(1)}_\psi \|_{L^2_\psi(\xi \lesssim 1)} \\
\lesssim & \langle x \rangle^{\frac{1}{4}} \| \phi^{(1)}_\psi \|_{L^2_\psi(\xi \lesssim 1)} \lesssim \langle x \rangle^{-(\frac{3}{2}-)} \| \phi \|_X. 
\end{align*}

This concludes the proof. 
\end{proof}

\section{Global Positivity of $\Omega$}

We now analyze the quantity 
\begin{align*}
\Omega = - \bar{u}_{yy} + \frac{1}{2} \bar{u} \bar{u}_x.
\end{align*} 

\begin{lemma} \label{Omega.lemma} For $[\bar{u}, \bar{v}]$ Blasius solutions, $\Omega \ge 0$.  
\end{lemma}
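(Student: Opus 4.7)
\bigskip

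\noindent\textbf{Proof plan for Lemma \ref{Omega.lemma}.} The plan is to reduce the inequality to a one-variable ODE statement about the Blasius profile $f$ and then exploit the sign information on $f, f', f'', f'''$ already recorded in (\ref{blasius.ODE}) and the lines following it.

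First I would rewrite $\Omega$ in the self-similar variable $\eta = y/\sqrt{x+1}$. Using $\bar u = f'(\eta)$ one computes
\begin{align*}
\bar u_y = \frac{f''(\eta)}{\sqrt{x+1}}, \qquad \bar u_{yy} = \frac{f'''(\eta)}{x+1}, \qquad \bar u_x = -\frac{\eta f''(\eta)}{2(x+1)},
\end{align*}
so that
\begin{align*}
\Omega = -\bar u_{yy} + \tfrac{1}{2} \bar u \bar u_x = \frac{1}{x+1}\Bigl( -f'''(\eta) - \tfrac{1}{4}\eta f'(\eta) f''(\eta) \Bigr).
\end{align*}
Now I would use the Blasius ODE $f''' = -f f''$ from (\ref{blasius.ODE}) to eliminate $f'''$, giving the factorization
\begin{align*}
\Omega = \frac{f''(\eta)}{x+1}\, h(\eta), \qquad h(\eta) := f(\eta) - \tfrac{1}{4}\eta f'(\eta).
\end{align*}
Since $f'' \ge 0$ and $x+1 > 0$, proving $\Omega \ge 0$ reduces to showing $h(\eta) \ge 0$ on $[0,\infty)$.

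For this one-variable claim I would differentiate twice. The boundary conditions on $f$ (together with $\bar v|_{y=0}=0$, which forces $f(0)=0$) give
\begin{align*}
h(0) = f(0) = 0, \qquad h'(\eta) = \tfrac{3}{4} f'(\eta) - \tfrac{1}{4}\eta f''(\eta), \qquad h'(0) = \tfrac{3}{4} f'(0) = 0,
\end{align*}
and
\begin{align*}
h''(\eta) = \tfrac{1}{2}f''(\eta) - \tfrac{1}{4}\eta f'''(\eta).
\end{align*}
Since $f''(\eta) \ge 0$ and $f'''(\eta) < 0$ (both recorded right after (\ref{blasius.ODE})), one has $h''(\eta) \ge 0$ for all $\eta \ge 0$, with strict inequality except possibly on a negligible set. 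Integrating twice from $0$ and using $h(0)=h'(0)=0$ yields $h(\eta) \ge 0$, hence $\Omega \ge 0$, as desired.

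The only non-routine points are: (i) identifying the right algebraic manipulation that pulls $f''$ out as a common factor (which requires using the Blasius ODE at the correct moment), and (ii) justifying $f(0)=0$, which is not listed explicitly in (\ref{blasius.ODE}) but follows from the no-penetration condition $\bar v|_{y=0}=0$ via the formula for $\bar v$ in (\ref{blasius}). Everything else is a direct consequence of the monotonicity and concavity properties of the Blasius profile.
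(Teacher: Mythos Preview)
Your proof is correct and follows the same overall strategy as the paper: factor $\Omega$ as $f''(\eta)/(x+1)$ times an auxiliary function of $\eta$, then show that auxiliary function is nonnegative using the signs of $f''$ and $f'''$.

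Two points of comparison are worth recording. First, you reach the factorization directly from $\bar u=f'$ and the Blasius ODE, whereas the paper first rewrites $\Omega$ via the Prandtl equation as $-\tfrac12\bar u_{yy}-\tfrac12\bar v\bar u_y$ and then passes to self-similar variables. Because of a normalization inconsistency in the paper's stated conventions (the ODE $ff''+f'''=0$ together with $\eta=y/\sqrt{x+1}$ and the displayed formula for $\bar v$ are not mutually consistent; standardly one needs $2f'''+ff''=0$ here), you land on $h=f-\tfrac14\eta f'$ while the paper obtains $\omega=2f-\eta f'$. The positivity argument is insensitive to this constant, so both versions go through. Second, your nonnegativity argument is more economical: the paper separately checks $\omega>0$ near $\eta=\infty$, Taylor-expands at $\eta=0$ (tracking $f^{(5)}(0)$), and only afterwards invokes $\omega''=-\eta f'''>0$; your observation $h(0)=h'(0)=0$, $h''\ge0$ subsumes all of this in one step.
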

\begin{proof} Since $\bar{u}$ is a solution to the Prandtl equation, which yields the identity 
\begin{align*}
\Omega = - \frac{1}{2} \bar{u}_{yy} - \frac{1}{2} \bar{v} \bar{u}_y. 
\end{align*}

As $[\bar{u}, \bar{v}]$ are Blasius solutions, we may further invoke the self-similar structure and subsequently the self-similar ODE satisfied by $f$ to rewrite 
\begin{align*}
2x \Omega =& - (\eta f' - f) f'' - f''' \\
= & - \eta f' f'' + f f'' + f f'' \\
= & (- \eta f' + 2f) f''. 
\end{align*}

For Blasius solutions, $f'' > 0$, and the question, therefore, reduces to establishing nonnegativity of the quantity $- \eta f' + 2f$, which we thus name $\omega$. 

First, as $f' \rightarrow 1$ and $f \rightarrow \eta$ at $\eta = \infty$, clearly 
\begin{align*}
2f > \eta f' \text{ as } \eta \rightarrow \infty.
\end{align*} 

We now analyze a sufficiently small neighborhood of $\eta = 0$. The following Taylor expansions are valid: 
\begin{align*}
&f =  f(0) + \eta f'(0) + \frac{\eta^2}{2} f''(0) + \frac{\eta^3}{6} f'''(0) + \bigO(\eta^4) \\
& \hspace{3 mm} = \frac{\eta^2}{2}f''(0) + \frac{\eta^3}{6} f'''(0) + \bigO(\eta^4) \\
&\eta f' = \eta [f'(0) + f''(0) \eta + \frac{f'''(0)}{2} \eta^2] + \bigO(\eta^4) \\
& \hspace{4 mm} = \eta^2 f''(0) + \frac{1}{2} f'''(0) \eta^3 + \bigO(\eta^4). 
\end{align*}

\noindent Multiplying the first quantity, $f$, by $2$ we see that the $f''$ terms match. The $f'''(0)$ terms have a factor of $\frac{1}{3}$ versus $\frac{1}{2}$ for $\eta f'$. In general, the $f^{(n)}(0)$ terms have a factor of $2 \frac{1}{n!}$ which is less than the factor of $\frac{1}{(n-1)!}$. Upon realizing that $f'''(0) = f^{(4)} = 0$ and then the first nonzero term is $f^{(5)}(0) < 0$, the positivity of $\omega > 0$ for $\eta << 1$ follows. In turn, this follows from differentiating the Blasius ODE to obtain 
\begin{align*}
f^{(5)} = - |f''|^2 - 2f' f''' - f f^{(4)}.  
\end{align*}

We must now analyze $w$ for $\eta$ in the ``in-between" regions. We need to check that $\omega$ cannot change sign. This would be implied if $\omega$ was non-decreasing. Taking one derivative: 
\begin{align*}
\p_\eta \omega = \p_\eta \{ 2f - \eta f' \} = f' - \eta f''. 
\end{align*}

The aim is to check the right-hand side is nonnegative. Again, we can check that this quantity is zero at $y = 0$ and $1$ at $y = \infty$, and thus would be nonnegative if it were to be monotonically increasing. Taking a further derivative, we obtain $- \eta f''' > 0$. 
\end{proof}

\section{Weighted Nash Inequality and Optimal Decay}

\begin{lemma} \label{lemma.nash} Solutions $\phi \in X$ to the system (\ref{eps.nonlin}) satisfy the following Nash-type inequality 
\begin{align}
\begin{aligned} \label{Nash.1}
\| \phi \|_{L^2_\psi}^2 \lesssim \max \begin{cases} x^{\frac{1}{10}} \| \sqrt{u} \phi_\psi \|_{L^2_\psi}^{\frac{4}{5}} \\ \| \sqrt{u} \phi_\psi \|_{L^2_\psi}^{\frac{2}{3}} \end{cases} 
\end{aligned}
\end{align}
\end{lemma}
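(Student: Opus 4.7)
The plan is to prove both bounds inside the maximum by a weighted truncation argument, splitting $\|\phi\|_{L^2_\psi}^2 = \int_0^R \phi^2 \, d\psi + \int_R^\infty \phi^2 \, d\psi$ at a threshold $R > 0$ to be optimized. The far-field tail is the easy piece: since $u \leq 1$ and $\langle\psi\rangle \geq \psi \geq R$ on $\{\psi \geq R\}$,
\[
\int_R^\infty \phi^2 \, d\psi \leq \frac{1}{R} \int_R^\infty \phi^2 \langle\psi\rangle \, d\psi \leq \frac{1}{R} \Bigl\| \phi \, \frac{\sqrt{\langle\psi\rangle}}{\sqrt{u}} \Bigr\|_{L^2_\psi}^2 \lesssim \frac{\|\phi\|_X^2}{R}.
\]
For the near-boundary piece, the boundary condition $\phi|_{\psi = 0} = 0$ and Cauchy-Schwarz yield the pointwise estimate
\[
\phi^2(x,\psi) \leq \Bigl( \int_0^\psi \frac{d\psi'}{u(x,\psi')} \Bigr) \Bigl( \int_0^\psi u(x,\psi') |\phi_{\psi'}|^2 \, d\psi' \Bigr) =: I(x,\psi) \cdot D^2,
\]
where $D := \|\sqrt{u}\phi_\psi\|_{L^2_\psi}$. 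Invoking the Blasius asymptotics $u \sim \sqrt{\xi} = \sqrt{\psi}/\langle x\rangle^{1/4}$ on $\{\xi \lesssim 1\}$ and $u \sim 1$ on $\{\xi \gtrsim 1\}$, one computes $I(x,\psi) \lesssim \langle x\rangle^{1/4} \sqrt{\psi}$ for $\psi \leq \sqrt{x}$, and $I(x,\psi) \lesssim \sqrt{x} + \psi$ for $\psi \geq \sqrt{x}$.

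Two regimes then arise in the optimization. If $R \leq \sqrt{x}$, integration yields $\int_0^R \phi^2 \lesssim \langle x\rangle^{1/4} R^{3/2} D^2$ and
\[
\|\phi\|_{L^2_\psi}^2 \lesssim \langle x\rangle^{1/4} R^{3/2} D^2 + R^{-1}.
\]
Balancing the two terms produces $R \sim \langle x\rangle^{-1/10} D^{-4/5}$ and the bound $\|\phi\|_{L^2_\psi}^2 \lesssim \langle x\rangle^{1/10} D^{4/5}$, valid precisely when the optimizer satisfies $R \leq \sqrt{x}$, i.e.\ when $D \geq \langle x\rangle^{-3/4}$. In the complementary regime $D \leq \langle x\rangle^{-3/4}$ the optimizer lives in $R \geq \sqrt{x}$, where $\int_0^R \phi^2 \lesssim (x + R^2) D^2$, giving
\[
\|\phi\|_{L^2_\psi}^2 \lesssim xD^2 + R^2 D^2 + R^{-1}.
\]
Balancing $R^2 D^2$ against $R^{-1}$ gives $R \sim D^{-2/3}$ and a contribution $\sim D^{2/3}$, while the remaining term satisfies $xD^2 = (xD^{4/3})\, D^{2/3} \lesssim D^{2/3}$ by the very hypothesis $D \leq \langle x\rangle^{-3/4}$. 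Combining both regimes yields \eqref{Nash.1}.

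The main delicacy is the near-boundary computation of $I(x,\psi)$, which requires sharp use of the square-root degeneracy $u \sim \sqrt{\xi}$ inherited from the Oleinik lower bound $u \sim \eta$ in Theorem \ref{thm.Oleinik}; this is what produces the exact $\langle x\rangle^{1/4}$ prefactor responsible for the exponents $\tfrac{4}{5}$ and $\tfrac{1}{10}$ after optimization. A useful consistency check, verifying the remark after \eqref{Nash.1} that both branches give the same decay rate, is that at the transition $D = \langle x\rangle^{-3/4}$ the two upper bounds coincide at $\langle x\rangle^{-1/2}$, corresponding to the optimal $\langle x\rangle^{-1/4}$ decay for $\|\phi\|_{L^2_\psi}$.
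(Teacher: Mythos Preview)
Your proof is correct and follows the same overall architecture as the paper's: split into near-boundary and far-field pieces at a threshold, control the far-field by the weighted $L^2$ quantity in the $X$ norm, control the near-boundary piece using the degeneracy $u \sim \sqrt{\xi}$, and optimize in two regimes according to whether the threshold sits below or above the self-similar scale $\sqrt{x}$. The two regimes and the resulting exponents match exactly (your threshold $R$ corresponds to the paper's $\tau$ via $R = \tau\sqrt{x}$).

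The one substantive difference is in the near-boundary estimate. The paper writes $\|\phi\chi\|_{L^2_\psi}^2 = \int \partial_\psi\{\psi\}\,\phi^2\chi^2$, integrates by parts, and absorbs the resulting $\|\phi\chi\|_{L^2_\psi}$ factor back to the left; this yields $\|\phi\chi\|_{L^2_\psi}^2 \lesssim x\,\varphi(\tau)\,\|\sqrt{u}\phi_\psi\|_{L^2_\psi}^2$ with $\varphi(\tau) = \tau^{3/2}$ or $\tau^2$. You instead use the pointwise Cauchy--Schwarz bound $\phi^2(x,\psi) \le I(x,\psi)\,D^2$ with $I(x,\psi) = \int_0^\psi u^{-1}\,d\psi'$, compute $I$ explicitly from $u \sim \sqrt{\xi}$, and integrate in $\psi$. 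Your route is slightly more direct and makes the role of the degeneracy more transparent; the paper's integration-by-parts trick avoids computing $I$ explicitly but requires the absorption step. Both produce identical bounds after the change of variables $R = \tau\sqrt{x}$, and your handling of the residual $xD^2$ term in the second regime (bounding it by $D^{2/3}$ via $D \le \langle x\rangle^{-3/4}$) is equivalent to noting $x \le R^2$ there.
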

\begin{proof} We first localize based on $\xi = \frac{\psi}{\sqrt{x}}$. Fix a $\tau$ to be selected later. Then by triangle inequality we split
\begin{align} \label{tri}
\| \phi \|_{L^2_\psi} \le \| \phi \chi(\frac{\xi}{\tau}) \|_{L^2_\psi} + \| \phi \chi(\frac{\xi}{\tau})^C \|_{L^2_\psi}
\end{align}

\noindent For the localized portion, we need to condition on whether or not $\tau < 1$ or $\tau > 1$. We integrate by parts via 
\begin{align}\label{chi}
\| \phi \chi(\frac{\xi}{\tau}) \|_{L^2_\psi}^2 = \int \p_\psi \{ \psi \} \phi^2 \chi(\frac{\xi}{\tau})^2 = - \int 2 \psi \phi \phi_\psi \chi(\frac{\xi}{\tau})^2 - \int \psi \phi^2 \frac{1}{\sqrt{x}} \frac{1}{\tau} \chi \chi'.  
\end{align}

\noindent We estimate the former term above term via 
\begin{align*}
|\int \psi \phi \phi_\psi \chi(\frac{\xi}{\tau})^2| \lesssim \begin{cases} \tau^{\frac{3}{2}}x \| \sqrt{u} \phi_\psi \|_{L^2_\psi}^2 \text{ if } \tau < 1 \\ \rho^2 x \| \sqrt{u} \phi_\psi \|_{L^2_\psi}^2 \text{ if } \tau \ge 1  \end{cases}
\end{align*}

\noindent More specifically, in the case when $\rho < 1$
\begin{align*}
|\int \psi \phi \phi_\psi \chi(\frac{\xi}{\tau})^2| \le & \| \phi \chi \|_{L_\psi^2} \| \psi \phi_\psi \chi \|_{L^2_\psi} \lesssim  \| \phi \chi \|_{L^2_\psi} \| \frac{\psi}{\sqrt{x}} \phi_\psi \chi \|_{L^2_\psi} \sqrt{x} \\
\lesssim & \| \phi \chi \|_{L^2_\psi} \sqrt{x} \| \xi \phi_\psi \chi \|_{L^2_\psi} \lesssim \| \phi \chi \| \sqrt{x} \tau^{\frac{3}{4}} \| \xi^{\frac{1}{4}} \phi_\psi \chi \|_{L^2_\psi} \\
\lesssim & \| \phi \chi \|_{L^2_\psi} \sqrt{x} \tau^{\frac{3}{4}} \| \sqrt{u} \phi_\psi \chi \|_{L^2_\psi} \\
\le & o(1) \| \phi \chi \|_{L^2_\psi}^2 + C x \tau^{\frac{3}{2}} \| \sqrt{u} \phi_\psi \chi \|_{L^2_\psi}^2. 
\end{align*}

\noindent The $o(1)$ term is absorbed to the left-hand side of (\ref{chi}). 

In the case when $\rho > 1$, we must estimate $\xi^{\frac{1}{4}} \le \tau^{\frac{1}{4}} \sqrt{u}$. To see that this is true, first assume $\xi \le 1$. Then $\xi^{\frac{1}{4}} \lesssim \sqrt{u} \lesssim \sqrt{u} \tau^{\frac{1}{4}}$ because $\tau > 1$ by assumption. Next, suppose $\xi \ge 1$. Then $\xi^{\frac{1}{4}} \le \tau^{\frac{1}{4}} \lesssim \tau^{\frac{1}{4}} \sqrt{u}$ because $u \gtrsim 1$ on the region when $\xi \ge 1$. 

For the second term in (\ref{chi}), we estimate identically to the far-field term from (\ref{tri}), which we now treat. 

For the far-field term, we estimate via 
\begin{align*}
|\int \phi^2 \psi \frac{1}{\psi} \chi(\frac{\xi}{\tau})^C| \lesssim \frac{1}{\tau \sqrt{x}} \| \phi \sqrt{\psi} \|_{L^2_\psi}^2. 
\end{align*}

In summary, we have thus established the inequality 
\begin{align*}
\| \phi \|_{L^2_\psi}^2 \lesssim \varphi(\tau) x \| \sqrt{u} \phi_\psi \|_{L^2_\psi}^2 + \frac{1}{\tau \sqrt{x}} \| \phi \sqrt{\psi} \|_{L^2_\psi}^2. 
\end{align*}

\noindent where $\varphi(\tau)$ is the piecewise function equal to $\tau^{\frac{3}{2}}$ on $\tau < 1$ and $\tau^2$ on $\rho \ge 1$. 

We now select 
\begin{align*}
\tau = \begin{cases} x^{-\frac{3}{5}} \| \phi \sqrt{\psi} \|_{L^2_\psi}^{\frac{4}{5}} \| \sqrt{\bar{u}} \phi_\psi \|_{L^2_\psi}^{-\frac{4}{5}} := r^{\frac{6}{5}} \text{ if } r < 1  \\
x^{-\frac{1}{2}} \| \phi \sqrt{\psi} \|_{L^2_\psi}^{\frac{2}{3}} \| \phi_\psi \sqrt{\bar{u}} \|_{L^2_\psi}^{-\frac{2}{3}} := r  \ge 1 \end{cases}
\end{align*}

\noindent The key point is that $\tau$ is homogeneous in $r$, and therefore we may consistently enforce when $\tau < 1$ and $\tau > 1$ because these are equivalent to $r < 1$ and $r > 1$. 

To conclude, we note that by definition of the $X$ norm, the weighted quantities $\| \phi \sqrt{\psi} \|_{L^2_\psi}$ are conserved in $x$ for solutions to (\ref{eps.nonlin}). This immediately gives (\ref{Nash.1}).
\end{proof}

We are now ready to establish the optimal decay rates.

\begin{proof}[Proof of Theorem \ref{thm.main}] Using (\ref{Nash.1}) in (\ref{energy.1}), letting $\alpha(x) := \| \phi \|_{L^2_\psi}^2$, we obtain either one of the two ODEs ($\cdot = \p_x$) : 
\begin{align*}
\dot{\alpha} + C_0 \alpha^3 \le 0 \text{ or } \dot{\alpha} + C_1 x^{-\frac{1}{4}} \alpha^{\frac{5}{2}} \le 0
\end{align*}

\noindent at each $x \in \mathbb{R}_+$. This immediately implies that $|\alpha| \lesssim \langle x \rangle^{-\frac{1}{2}}$, which means that $\| \phi \|_{L^2_\psi} \lesssim \langle x \rangle^{-\frac{1}{4}}$. 

We may $x$-differentiate (\ref{Nash.1}) and use them in the higher order energy estimates (\ref{rhs.1}), (\ref{energy.k}) in exactly the same fashion which yields $\| \phi^{(j)} \|_{L^2_\psi} \lesssim \langle x \rangle^{-j-\frac{1}{4}}$. From here, one repeats all of the above $X$-norm estimates with self-similar weights $\langle \eta \rangle$, which follows in an identical manner.  The estimates (\ref{est.main}) follow from standard interpolation, the relations 
\begin{align*}
u - \bar{u} =  \frac{\phi}{u + \bar{u}}, \hspace{2 mm} \p_y u = u \p_\psi u, \hspace{2 mm} \p_y \bar{u} = \bar{u} \p_\psi \bar{u},
\end{align*}

\noindent and Lemmas 4, 4' in \cite{Serrin} to replace the stream function variable $\psi$ with the physical variable $y$ (one takes the parameter $a$ in Lemmas $4, 4'$ of \cite{Serrin} to be $\langle x \rangle^{-\frac{1}{2}}$).  
\end{proof}

\renewcommand\refname{References}
%\pagenumbering{gobble}
\def\bibindent{3.5em}

\end{document}